\definecolor{darkgreen}{rgb}{0,0.5,0}
\definecolor{darkred}{rgb}{0.7,0,0}
\theoremstyle{plain}
\newtheorem{lemma}{Lemma}[section]
\newtheorem{thm}[lemma]{Theorem}
\newtheorem{prop}[lemma]{Proposition}
\newtheorem{cor}[lemma]{Corollary}
\theoremstyle{definition}
\newtheorem{rmk}[lemma]{Remark}
\numberwithin{equation}{section}
\newcommand{\m}{\mathcal{M}}
\newcommand{\stw}{\rho^{-2}\vph^2}
\newcommand{\pl}[2]{{\frac{\partial #1}{\partial #2}}}
\newcommand{\al}{\alpha}
\newcommand{\be}{\beta}
\newcommand{\ga}{\gamma}
\newcommand{\de}{\delta}
\newcommand{\om}{\omega}
\newcommand{\Om}{\Omega}
\newcommand{\la}{\lambda}
\newcommand{\La}{\Lambda}
\newcommand{\si}{\sigma}
\newcommand{\Si}{\Sigma}
\renewcommand{\th}{\theta}
\newcommand{\Th}{\Theta}
\newcommand{\vph}{\varphi}
\newcommand{\vth}{\vartheta}
\newcommand{\ep}{\varepsilon}
\newcommand{\R}{\ensuremath{{\mathbb R}}}
\newcommand{\N}{\ensuremath{{\mathbb N}}}
\newcommand{\C}{\ensuremath{{\mathbb C}}}
\newcommand{\downto}{\downarrow}
\newcommand{\embed}{\hookrightarrow}
\newcommand{\grad}{\nabla}
\newcommand{\intersect}{\cap}
\newcommand{\union}{\cup}
\DeclareMathOperator{\inj}{inj}
\newcommand{\norm}[1]{\Vert#1\Vert}  
\newcommand{\beq}{\begin{equation}}
\newcommand{\eeq}{\end{equation}}
\newcommand{\beqs}{\begin{equation}}
\newcommand{\eeqs}{\end{equation}}
\newcommand{\beqa}{\begin{equation}\begin{aligned}}
\newcommand{\eeqa}{\end{aligned}\end{equation}}
\newcommand{\beqas}{\begin{equation}\begin{aligned}}
\newcommand{\eeqas}{\end{aligned}\end{equation}}
\newcommand{\brmk}{\begin{rmk}}
\newcommand{\ermk}{\end{rmk}}
\newcommand{\partref}[1]{\hbox{(\csname @roman\endcsname{\ref{#1}})}}
\newcommand{\half}{\frac{1}{2}}
\newcommand{\lie}{\mathcal{L}}
\newcommand*\tr{\mathop{\mathrm{tr}}\nolimits}
\newcommand*\dist{\mathop{\mathrm{dist}}\nolimits}
\newcommand*\arsinh{\mathop{\mathrm{arsinh}}\nolimits}
\newcommand*\ddt{\frac{\mathrm{d}}{\mathrm{d}t}}
\newcommand{\pt}{\partial_t}
\newcommand{\M}{\ensuremath{{\mathcal M}}_{-1}}
\newcommand{\abs}[1]{\vert#1\vert} 
\newcommand{\eps}{\varepsilon}
\newcommand{\na}{\nabla}
\newcommand{\Qu}{\mathcal{Q}}
\newcommand{\Hol}{{\mathcal{H}}} 
\newcommand{\ran}{\rangle}
\newcommand{\lan}{\langle}
\newcommand{\Col}{\mathcal{C}}
\newcommand{\thin}{\text{-thin}}
\newcommand{\thick}{\text{-thick}}
\newcommand{\Xd}{X_{\delta}}
\title{{\sc
Teichm\"uller harmonic map flow into nonpositively
curved targets
}
\\ 
}
\author{Melanie Rupflin and Peter M. Topping}
\date{\today}
\begin{document}
\maketitle
\begin{abstract}
The Teichm\"uller harmonic map flow deforms both a map from an oriented closed surface $M$ into an arbitrary closed Riemannian manifold, and a constant curvature metric on $M$, so as to reduce the energy of the map as quickly as possible \cite{RT}. The flow then tries to converge to a branched minimal immersion when it can \cite{RT, RTZ}. The only thing that can stop the flow is a finite-time degeneration of the metric on $M$ where one or more collars are pinched. In this paper we show that finite-time degeneration cannot happen in the case that the target has nonpositive sectional curvature, and indeed more generally
in the case that the target supports no bubbles.
In particular, when combined with  \cite{RT, RTZ, HRT}, 
this shows that the flow will decompose an arbitrary such map into a collection of branched minimal immersions.
\end{abstract}



\section{Introduction}

Given a smooth oriented closed surface $M:=M_\gamma$ of genus $\gamma\geq 2$ and a smooth closed Riemannian manifold 
$N=(N,G)$ of any dimension, we can imagine taking a gradient flow of the harmonic map energy 

$$E(u,g):=\half\int_M |du|^2_g dv_g,$$
simultaneously for both $u:M\to N$ a map and $g$ a hyperbolic (constant Gauss curvature $-1$) metric on $M$.
More precisely, given a fixed parameter $\eta>0$,
the Teichm\"uller harmonic map flow, introduced in \cite{RT},
is the flow defined by 
\begin{equation}
\label{flow}
\pl{u}{t}=\tau_g(u);\qquad \pl{g}{t}=\frac{\eta^2}{4} Re(P_g(\Phi(u,g))),
\end{equation}
where $\tau_g(u)$ represents the tension field of $u$ (i.e.
$\tr \nabla du$), $P_g$ represents the $L^2$-orthogonal projection from the space of quadratic differentials on $(M,g)$ onto the space of \emph{holomorphic} quadratic differentials, and
$\Phi(u,g)$ represents the Hopf differential -- see \cite{RT} for further information
and a description of the genus $\ga\leq 1$ cases.
The flow decreases the energy according to
\beq
\label{energy-identity}
\frac{dE}{dt}=-\int_M 
\left[|\tau_g(u)|^2+\left(\frac{\eta}{4}\right)^2 |Re(P_g(\Phi(u,g)))|^2\right]dv_g
.
\eeq

Given any initial data $(u_0,g_0)\in H^1(M,N)\times \M$, 
with $\M$ the set of smooth hyperbolic metrics on $M$, we know \cite{rupflin_existence} that a (weak) solution of \eqref{flow} exists on a maximal interval $[0,T)$, 
smooth except possibly at finitely many times,
and that $T<\infty$ only if the flow of metrics degenerates in moduli space
as $t\nearrow T$, that is if the length $\ell(g(t))$ of the shortest closed geodesic in $(M,g(t))$ converges to zero as $t\nearrow T$.
In the case that $T=\infty$, a description of the asymptotics of the flow was given in \cite{RTZ, HRT} 
(following on from \cite{RT}). Loosely speaking, it was shown that the surface $(M,g(t))$ can degenerate into finitely many lower genus surfaces, with the map $u(t)$ subconverging (modulo bubbling) to branched minimal immersions (or constant maps) on each of these components.

That theory immediately begs the question of whether the flow exists for all time (and thus enjoys this asymptotic convergence to minimal surfaces) or whether on the contrary, 
$\ell(g(t))$ can decay to zero in finite time, in which case a `collar' in the surface $(M,g(t))$ must pinch in finite time (see e.g. \cite{RT}).

In this paper we show that in the case that the target $(N,G)$ has nonpositive curvature, the Teichm\"uller harmonic map flow is very well behaved, with a \emph{smooth} solution existing for all time, given arbitrary initial data, and no bubbling occurring at infinite time. 
In fact, we prove this under the hypothesis that there does not exist any nonconstant harmonic map from $S^2$ to $(N,G)$, i.e. $(N,G)$ \emph{does not support any bubbles}, which is a more general result as we recall in Section \ref{sect_ruling}.
The flow then directly decomposes an arbitrary map into a collection of branched minimal immersions.

The theory of the classical harmonic map flow originated in the seminal paper of Eells and Sampson \cite{ES} in which the hypothesis of nonpositive curvature was also present. The essential idea in the classical case is that this hypothesis gives an upper bound on the energy density that is uniform in space and time. That is no longer true in our situation.
The main challenge in our work is to prevent the degeneration of collars (which makes no sense in the classical case) and the techniques we develop here are far removed from \cite{ES}. Our main result could be stated as:

\begin{thm}
\label{mainthm}
Suppose $M$, $(N,G)$ and $\M$ are as above, with $(N,G)$ having nonpositive sectional curvature, or more generally not supporting any bubbles.
Given any initial data $(u_0,g_0)\in C^\infty(M,N)\times \M$, 
there exists a smooth solution $(u(t),g(t))$ to \eqref{flow}, for $t\in [0,\infty)$.
\end{thm}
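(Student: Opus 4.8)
The plan is to argue by contradiction, so suppose the maximal existence time $T$ is finite. By the results recalled above (from \cite{rupflin_existence,RT}), this forces $\ell(g(t))\to 0$ as $t\nearrow T$, i.e. one or more collars of $(M,g(t))$ are pinched in finite time, and the task is to show that this is incompatible with the hypothesis that $(N,G)$ supports no bubbles. Throughout we exploit the energy identity \eqref{energy-identity}, which gives both the uniform bound $E(u(t),g(t))\le E(u_0,g_0)=:E_0$ and, since $T<\infty$, the finiteness of $\int_0^T\int_M\big(|\tau_g(u)|^2+(\eta/4)^2|Re(P_g(\Phi(u,g)))|^2\big)\,dv_g\,dt$. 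In particular there is a sequence of times $t_i\nearrow T$ along which $\|\tau_{g(t_i)}(u(t_i))\|_{L^2}\to 0$ and $\|Re(P_{g(t_i)}(\Phi(u(t_i),g(t_i))))\|_{L^2}\to 0$; at such times $u(t_i)$ is \emph{almost harmonic}.

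First I would analyse these almost-harmonic slices on the degenerating domain $(M,g(t_i))$. Split $M$ into a thick part, on which $g(t_i)$ stays uniformly non-degenerate, and a thin part, a union of hyperbolic collars, at least one of which is collapsing. A compactness and bubbling analysis in the spirit of Sacks--Uhlenbeck and Struwe --- now carried out on a sequence of surfaces whose conformal structure is itself degenerating, as in the asymptotic analysis of \cite{RTZ,HRT} --- shows that, after passing to a subsequence, either the energy of $u(t_i)$ concentrates at points (whence, via $\eps$-regularity for almost-harmonic maps, rescaling, and the removable-singularity theorem, one extracts in the limit a nonconstant harmonic map $S^2\to N$, i.e. a bubble) or it disperses along the collapsing collars. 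The no-bubbles hypothesis rules out the first alternative; and a no-neck-energy argument --- resting on the fact that a finite-energy harmonic map from the infinite cylinder $S^1\times\R\cong S^2\setminus\{2\text{ points}\}$ extends, by removability of singularities, to a harmonic map $S^2\to N$ and is hence constant --- forces the energy of $u(t_i)$ on the collapsing collars to tend to $0$. Feeding this back through the energy identity to control the flow between consecutive good times, one upgrades it to the statement that the energy of $u(t)$ on the pinching collar(s) tends to $0$ as $t\nearrow T$.

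Given this collar-energy decay, I would bound the rate at which a collar can pinch. The metric evolves only through $\pt g=\tfrac{\eta^2}{4}Re(P_g(\Phi(u,g)))$, the real part of a holomorphic quadratic differential, so the motion through moduli space --- in particular the rate of change of the length $\ell_j(t)$ of a collapsing geodesic --- is governed by the pairing of $Re(P_g(\Phi))$ with the infinitesimal Fenchel--Nielsen deformation associated to that collar. Since $P_g$ is the $L^2$-orthogonal projection, this pairing is controlled by that of $\Phi(u,g)$ itself against the relevant holomorphic deformation, and as $|\Phi(u,g)|\le|du|^2$ this is in turn controlled by the energy of $u$ near the $j$-th collar, together with the standard estimates on holomorphic quadratic differentials on thin hyperbolic collars from \cite{RT}. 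This yields a differential inequality of the shape $\big|\tfrac{d}{dt}\log\tfrac{1}{\ell_j(t)}\big|\le C\,\eps_j(t)$ with $\eps_j(t)\to 0$ as $t\nearrow T$; integrating over $[T-\delta,T)$ shows that $\log\tfrac{1}{\ell_j}$ stays bounded near $T$, contradicting $\ell_j(t)\to 0$. Hence $T=\infty$. Finally, the same almost-harmonic and bubbling dichotomy applied at interior times excludes the finitely many possible singular times of the weak solution --- which, in the absence of collar degeneration, can only be bubble-type singularities of the map, exactly as for the classical harmonic map flow into such targets --- so the solution is in fact smooth on all of $[0,\infty)$.

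The step I expect to be the main obstacle is the analysis on the degenerating domain. One must run the bubbling and neck-energy analysis not for genuine harmonic maps but for the merely almost-harmonic slices $u(t_i)$, on a sequence of surfaces whose conformal structure is simultaneously degenerating; and, crucially, one must obtain the collar-energy decay robustly --- not just along the sequence $t_i$ but in a way that controls the flow on a whole interval $[T-\delta,T)$, ruling out that energy is repeatedly pumped onto and off the collapsing collar. Controlling this interplay between the metric flow and the map flow near a collapsing collar, and making the resulting estimates quantitative, is where the real work lies.
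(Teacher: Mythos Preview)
Your argument has a genuine gap at the differential inequality step, and it is not one that can be patched by making the preceding neck analysis more careful.

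You claim that since $|\Phi(u,g)|\le |du|^2$, the pairing of $\Phi$ against the holomorphic deformation pinching the $j$-th collar is controlled by the \emph{energy} of $u$ on that collar, yielding $\big|\tfrac{d}{dt}\log\tfrac{1}{\ell_j}\big|\le C\,\eps_j(t)$ with $\eps_j\to 0$. But the holomorphic quadratic differential that moves $\ell_j$ is essentially $dz^2$ on $\Col_j$, and $|dz^2|_g=2\rho^{-2}$ blows up like $\ell_j^{-2}$ at the centre of the collar. The pairing $\langle\Phi,dz^2\rangle_{L^2(\Col_j,g)}$ therefore equals a \emph{weighted} energy $\int_{\Col_j}(|u_s|^2-|u_\theta|^2)\rho^{-2}\,ds\,d\theta$, and the correct inequality (Lemma~\ref{thmf_length_evol}) is
\[
\Big|\tfrac{d}{dt}\log\ell_j\Big|\le C\,\ell_j\Big(\int_{\Col_j} e(u,g)\,\rho^{-2}\,dv_g + E_0\Big)=C\,\ell_j\,(I+E_0).
\]
Even if your neck analysis gave $E(u;\Col_j)=\eps_j\to 0$, you would only get $I\le (2\pi/\ell_j)^2\eps_j$ and hence $\ell_j I\le C\eps_j/\ell_j$, which is useless unless you also prove $\eps_j=o(\ell_j)$; nothing in your outline gives such a rate. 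The paper is explicit that one must work in the regime where the collar energy is \emph{not} small and the tension may be \emph{large}.

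The paper's route is accordingly quite different from yours. It never shows that the collar energy decays. Instead it proves a Gronwall-type evolution inequality for (a smoothed version of) the weighted energy $I$ itself, namely $\big|\tfrac{d}{dt}\log(1+\I)\big|\le C(1+\|\tau_g(u)\|_{L^2}^2)$ (Lemma~\ref{lemma:I-zeppelin}), which integrates against the energy identity to give $I\le Ce^{Ct}$ on any finite time interval. Establishing this inequality is the heart of the paper: the two terms in $\tfrac{d}{dt}\I$ coming from $\partial_t(g^{ss})$ and $\partial_t(\rho^{-2})$ are individually of order $\ell\,\I^2$ and must be shown to cancel (Lemma~\ref{lemma:rho}, resting on the fine analysis of $P_g$ in Section~\ref{length:evol:sect}), while the cross term forces one to control the \emph{angular} weighted energy $I^{(\theta)}=\int_{\Col}\rho^{-2}|u_\theta|^2\,d\theta\,ds$ separately. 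That last step (Lemma~\ref{ang_en_est}) is where the no-bubbles hypothesis actually enters, via a delay differential inequality for smoothed angular energies on unit chunks of the collar, and it is the only place it is used in the collar-degeneration argument. Your proposal to invoke no-bubbles through a Sacks--Uhlenbeck bubbling dichotomy at almost-harmonic times, followed by a no-neck-energy statement, does not produce the quantity that actually governs $\tfrac{d}{dt}\log\ell$.
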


The proof of Theorem \ref{mainthm} will be somewhat involved, but at the coarsest level, it will turn out that the rate of collapse of a collar will be controlled by a weighted energy
$$\hat I=\half\int_M\frac{|du|^2(x)}{[\inj_g(x)]^2}dv_g(x),$$
(where $\inj_g(x)$ is the injectivity radius of $(M,g)$ at $x$) and a key part of this work will revolve around obtaining an upper bound for $\hat I$ over finite time intervals. Of course, the 
objective is to prevent energy from gathering on thin collars
where the injectivity radius is small. Our argument to deal with this involves an analysis that is reminiscent of the theory of neck analysis for almost harmonic maps \cite{QingTian, LinWang, toppingAnnals}, except our estimates must deal with the case where the energy on the collar is \emph{not} small, and where the tension field can be \emph{large}. We are also unable to use Hopf differential estimates to relate angular energy with radial energy on the collar, forcing us to deviate substantially from existing techniques.

Coupling our result with the asymptotic description of the flow from \cite{RTZ}, and using the 
hypothesis for the target once more, we will prove:

\begin{thm}
\label{thm:asymptotics}
In the situation of Theorem \ref{mainthm},
there exist a sequence of times $t_n\to \infty$, an integer $0\leq k\leq 3(\gamma-1)$ and a
hyperbolic punctured surface $(\Si,h,c)$ with $2k$ punctures (i.e. a closed Riemann surface $(\hat\Si,\hat c)$, possibly disconnected, that has been punctured $2k$ times and then equipped with a compatible complete hyperbolic metric $h$) such that the following holds.
\begin{enumerate}
\item The surfaces $(M,g(t_n),c(t_n))$ converge to the surface $(\Si,h,c)$ by collapsing $k$ simple closed geodesics $\sigma^{j}_{n}$
in the sense of Proposition \ref{Mumford}; in particular there is a sequence of diffeomorphisms $ f_n:\Si\to M\setminus \cup_{j=1}^k \sigma^{j}_{n}$
such that $$f_n^*g(t_n)\to h \text{ and } f_n^*c(t_n)\to c \text{ smoothly locally, }$$
where $c(t)$ denotes the complex structure of $(M,g(t))$.
 \item The maps $f_n^*u(t_n):=u(t_n)\circ f_n$ converge to a limit $u_\infty$ strongly in $W_{loc}^{2,2}(\Si)$.
\item 
The limit $u_\infty:\Si\to N$ extends to a smooth branched minimal immersion (or constant map) on each component of the compactification $(\hat\Si,\hat c)$ of $(\Sigma,c)$ obtained by filling in each of the $2k$ punctures.
\end{enumerate}
\end{thm}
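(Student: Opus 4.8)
The plan is to combine the global existence statement of Theorem~\ref{mainthm} with the infinite-time asymptotic analysis already developed in \cite{RTZ, HRT} and then invoke the no-bubbles hypothesis to promote the limiting objects to genuine branched minimal immersions. First, since Theorem~\ref{mainthm} gives a smooth solution $(u(t),g(t))$ on all of $[0,\infty)$, the energy identity \eqref{energy-identity} shows that $E(u(t),g(t))$ is nonincreasing and bounded below by $0$, hence
\[
\int_0^\infty\!\int_M\Big[|\tau_g(u)|^2+\big(\tfrac{\eta}{4}\big)^2|Re(P_g(\Phi(u,g)))|^2\Big]\,dv_g\,dt<\infty.
\]
Consequently one may choose a sequence $t_n\to\infty$ along which both $\|\tau_{g(t_n)}(u(t_n))\|_{L^2(M)}\to 0$ and $\|Re(P_{g(t_n)}(\Phi(u(t_n),g(t_n))))\|_{L^2(M)}\to 0$. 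This is exactly the hypothesis under which the compactness theory of \cite{RTZ} applies.

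Next I would feed this sequence into Proposition~\ref{Mumford} and the convergence results of \cite{RTZ}: the hyperbolic metrics $g(t_n)$, having fixed area $2\pi|\chi(M)|$, lie in moduli space, and after passing to a further subsequence they converge in the Deligne--Mumford sense by collapsing some collection of $k\le 3(\gamma-1)$ disjoint simple closed geodesics $\sigma_n^j$, producing a complete hyperbolic metric $h$ on a punctured surface $\Sigma$ with $2k$ punctures and diffeomorphisms $f_n:\Sigma\to M\setminus\cup_j\sigma_n^j$ with $f_n^*g(t_n)\to h$ and $f_n^*c(t_n)\to c$ smoothly locally. On the thick part, the pulled-back maps $u(t_n)\circ f_n$ are almost harmonic with uniformly bounded energy, so by the $\varepsilon$-regularity and $W^{2,2}$-compactness theory for almost harmonic maps (as carried out in \cite{RTZ}) they subconverge strongly in $W^{2,2}_{loc}(\Sigma)$ to a limit $u_\infty$, which is therefore a weakly harmonic map on $(\Sigma,h)$, indeed harmonic with respect to the conformal class $c$. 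This yields parts (1) and (2).

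For part (3) the essential point is removability of the punctures and of any concentration points. A priori, bubbling could occur both at finitely many interior points of $\Sigma$ and along the degenerating collars, where energy could escape into the necks; the bubbles would be nonconstant harmonic maps from $S^2$ to $(N,G)$, obtained by the usual rescaling argument. The no-bubbles hypothesis immediately rules all of these out, so no energy is lost, the convergence is in fact strong in $W^{2,2}$ across the punctures, and $u_\infty$ has finite energy near each puncture. A weakly harmonic map of finite energy from a punctured disc extends smoothly across the puncture by the removable-singularity theorem of Sacks--Uhlenbeck (together with the conformal invariance of the energy in dimension two), so $u_\infty$ extends to a smooth harmonic map on each component of the compactification $(\hat\Sigma,\hat c)$; being weakly conformal as a limit of the relation $\Phi(u,g)\to$ (holomorphic part), it is a branched minimal immersion or constant. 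I expect the main obstacle to be the neck analysis that shows \emph{no energy is lost in the degenerating collars} and that the limiting map does not jump across the pinched geodesics --- this is precisely the delicate part of \cite{RTZ} (and the reason the no-bubbles hypothesis is needed rather than merely nonpositive curvature), combining an annulus-type energy decay estimate with the control on the Hopf differential; once that is in hand, the interior and puncture regularity are standard.
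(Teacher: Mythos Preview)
Your approach is essentially the same as the paper's: invoke Theorem~\ref{mainthm} for global existence, choose times $t_n$ along which the tension and the projected Hopf differential vanish in $L^2$, apply the Deligne--Mumford compactness of Proposition~\ref{Mumford}, and then defer to the asymptotic analysis of \cite{RT,RTZ}, using the no-bubbles hypothesis to kill the singular set.

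There is, however, one genuine misconception in your part~(3). You identify the ``main obstacle'' as a neck analysis showing that no energy is lost on the degenerating collars and that the limit does not jump across the pinched geodesics. Neither of these is needed for the theorem as stated. The limit $u_\infty$ lives on $\Sigma$, and the extension to $\hat\Sigma$ is carried out \emph{separately} on each component by the Sacks--Uhlenbeck removable singularity theorem, using only that $u_\infty$ is harmonic with finite energy near each puncture (which is immediate from the energy bound). There is no assertion that the extensions on opposite sides of a pinched collar agree --- indeed the paper explicitly disclaims this in the remark following the theorem. The no-energy-loss on collars is a separate and stronger statement, established in \cite{HRT}, not \cite{RTZ}, and not required here. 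Similarly, your claim that ``the convergence is in fact strong in $W^{2,2}$ across the punctures'' is neither needed nor asserted.

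One small technical point the paper makes explicit and you gloss over: \cite[Theorem~1.1]{RTZ} and \cite[Theorem~1.4]{RT} only state $W^{1,p}_{loc}$ convergence, so the upgrade to $W^{2,2}_{loc}$ requires a short argument. The paper does this by noting that $\Delta_h(f_n^*u(t_n)-u_\infty)\to 0$ in $L^2_{loc}(\Sigma)$ once $S=\emptyset$; your appeal to $\varepsilon$-regularity is an acceptable alternative, but you should say why the small-energy hypothesis is met locally (which is exactly where the no-bubbles assumption enters, via something like Lemma~\ref{small_energy}).
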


\begin{rmk}
We do not claim that the image of $u_\infty$ must be connected. Indeed, as we show in \cite{HRT} with T. Huxol, the images of collapsing collars in $(M,g(t_n))$ can be mapped close to  nontrivial curves connecting the individual components of the image of $u_\infty$.
On the other hand, there can be no loss of energy on degenerating collars, see \cite{HRT}.
\end{rmk}

\begin{rmk}
In the light of the results above, it is interesting to compare the Teichm\"uller harmonic map flow with the mean curvature flow, which is also designed to flow to minimal maps. That flow can also be viewed as a flow of a pair $(u,g)$, where $u$ is an immersion that again satisfies the harmonic map flow, but the metric $g$ is set to equal the induced metric $u^*G$ at each moment, forcing it always to be conformal. Our main theorem \ref{mainthm} says that by imposing a curvature condition on the target, all singularities for the Teichm\"uller harmonic map flow can be eradicated at finite time. One cannot hope for a similar result for mean curvature flow.
\end{rmk}

Although it does not require the main innovations of this paper, when the target is negatively curved \emph{and}  the initial map $u_0$ is incompressible, a particularly clean conclusion follows using the work in \cite{RT}, including \cite[Remark 3.4]{RT} (see also Schoen-Yau \cite{SY} and Sacks-Uhlenbeck \cite{SU_TAMS}).

\begin{cor}
Suppose $(M,g_0)$ is an oriented closed hyperbolic surface and $(N,G)$ is a closed manifold with nonpositive sectional curvature. Suppose moreover that $u_0:M\to N$ is any smooth incompressible map. Then there exists 
a hyperbolic metric $\bar g$ on $M$ and a smooth branched minimal immersion $\bar u:(M,\bar g)\to (N,G)$ homotopic to $u_0$.

Indeed, there exist a global smooth solution $(u,g)$ of \eqref{flow}
for all $t\geq 0$, with $(u_0,g_0)$ as initial data, together with
sequences of times $t_n\to\infty$ and diffeomorphisms $f_n: M\to M$ isotopic to the identity such that 
\begin{enumerate}
\item
$f_n^*[g(t_n)]\to \bar g$ smoothly, and 
\item
$u(t_n)\circ f_n \to \bar u$ in $W^{2,2}(M,N)$, and in particular, in $C^0(M,N)$.
\end{enumerate}
\end{cor}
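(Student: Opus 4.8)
The plan is to deduce the corollary from Theorems~\ref{mainthm} and~\ref{thm:asymptotics} together with the classical incompressibility argument used in \cite[Remark~3.4]{RT} (and in \cite{SY,SU_TAMS}); the only genuinely new input it needs is Theorem~\ref{mainthm}, to guarantee global existence in the first place. First I would note that nonpositive sectional curvature implies that $(N,G)$ supports no bubbles (recalled in Section~\ref{sect_ruling}), so Theorem~\ref{mainthm} gives a smooth solution $(u,g)$ of \eqref{flow} on $[0,\infty)$ with $(u_0,g_0)$ as initial data; being a smooth family, $u(t)$ is homotopic to $u_0$ for every $t$, and by \eqref{energy-identity} we have $E(u(t),g(t))\le E(u_0,g_0)=:E_0$. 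Then I would apply Theorem~\ref{thm:asymptotics} to extract $t_n\to\infty$, an integer $k$, a hyperbolic limit $(\Sigma,h,c)$ obtained by collapsing $k$ simple closed geodesics $\sigma^j_n\subset M$, diffeomorphisms $f_n:\Sigma\to M\setminus\bigcup_j\sigma^j_n$ with $f_n^*g(t_n)\to h$ smoothly, and $u(t_n)\circ f_n\to u_\infty$ in $W^{2,2}_{loc}(\Sigma)$, with $u_\infty$ extending to a branched minimal immersion or constant map on the compactification.

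The main step --- and the main obstacle, although it is in essence classical --- is to show that $k=0$, i.e.\ that no collar degenerates, and to do this I would use incompressibility as follows. Each collapsing geodesic $\sigma^j_n$ is a closed geodesic in a hyperbolic surface and hence essential, so $[\sigma^j_n]\neq 1$ in $\pi_1(M)$; since $u(t_n)$ is homotopic to the incompressible (hence $\pi_1$-injective) map $u_0$, it follows that $u(t_n)_*[\sigma^j_n]\neq 1$ in $\pi_1(N)$. Consequently every cross-curve of the collar about $\sigma^j_n$, being freely homotopic to $\sigma^j_n$, is mapped by $u(t_n)$ to a noncontractible loop in $N$, whose length is therefore bounded below by the systole $L:=\mathrm{sys}(N)>0$ of the compact manifold $N$. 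On the other hand, as $\ell(\sigma^j_n)\to 0$ the conformal modulus of this collar tends to infinity (the collar lemma), so by the conformal invariance of the Dirichlet energy in two dimensions, combined with a Fubini/Cauchy--Schwarz slicing estimate bounding the angular energy of a slice below by a constant times the squared length of its image loop, the energy of $u(t_n)$ on that collar alone is bounded below by a constant multiple of $L^2$ times the modulus, and hence diverges. For $n$ large this contradicts $E(u(t_n),g(t_n))\le E_0$, so $k=0$.

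Finally, with $k=0$ the limit surface $\Sigma$ is the closed surface $M$ and $\bar g:=h$ is a smooth hyperbolic metric on $M$. Since $\ell(g(t))$ is now bounded below along the flow, $[g(t_n)]$ stays in a fixed compact part of moduli space, so after passing to a subsequence and replacing $f_n$ by a diffeomorphism isotopic to the identity --- as in the no-degeneration convergence scenario of \cite{RT} --- I may assume $f_n:M\to M$ is isotopic to the identity with $f_n^*[g(t_n)]\to\bar g$ smoothly and $u(t_n)\circ f_n\to u_\infty=:\bar u$ in $W^{2,2}(M,N)$, hence in $C^0(M,N)$ by Sobolev embedding. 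It remains to check that $u_\infty$ is nonconstant: if it were constant then $u(t_n)\circ f_n$ would be null-homotopic for large $n$, yet it is homotopic to $u_0\circ f_n$, which is homotopic to $u_0$ because $f_n$ is isotopic to the identity, and $u_0$ is not null-homotopic since it is $\pi_1$-injective and $\pi_1(M)\neq 1$ --- a contradiction. Hence $u_\infty$ is nonconstant, so by Theorem~\ref{thm:asymptotics}(3) the limit $\bar u:(M,\bar g)\to(N,G)$ is a smooth branched minimal immersion, and $\bar u$ is homotopic to $u(t_n)\circ f_n$, hence to $u_0$. This establishes both the existence statement and the convergence assertions of the corollary; besides the collar estimate above, the one technical point deserving care is the reduction to $f_n$ isotopic to the identity, which uses that the conformal class of $g(t)$ cannot escape to infinity in Teichm\"uller space once degeneration is excluded.
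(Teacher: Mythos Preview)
Your argument is essentially correct and uses the same key ingredient as the paper --- the classical incompressibility/systole argument (Schoen--Yau, Sacks--Uhlenbeck, \cite[Remark~3.4]{RT}) that forces the energy on a thin collar to blow up. However, your route is more roundabout than what the paper intends, and your opening claim that ``the only genuinely new input it needs is Theorem~\ref{mainthm}'' is precisely what the paper is at pains to \emph{deny}: the sentence preceding the corollary stresses that it ``does not require the main innovations of this paper.''

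The point is that your collar--energy estimate applies at \emph{every} time $t$, not just along the sequence $t_n$. Thus it gives a uniform lower bound on $\ell(g(t))$ directly, for as long as the solution exists. Combined with the existence theory of \cite{rupflin_existence} (finite-time blow-up only if $\ell\to 0$) and the absence of bubbles under nonpositive curvature, this already yields a global smooth solution --- bypassing the weighted-energy machinery of Theorem~\ref{mainthm} entirely. One then feeds this uniform injectivity-radius bound straight into \cite[Theorem~1.4]{RT}, which is set up exactly for the non-degenerating case and produces the diffeomorphisms $f_n$ isotopic to the identity together with the $W^{2,2}$ subconvergence and the branched minimal limit; the nonconstancy of the limit follows from incompressibility as you argue. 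Your detour through Theorem~\ref{thm:asymptotics} to first establish $k=0$ and then retreat to \cite{RT} for the isotopic-to-identity upgrade is valid but redundant: once you have the uniform bound on $\ell(g(t))$, Theorem~\ref{thm:asymptotics} is not needed at all.
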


\begin{rmk}
As we will discuss elsewhere, in contrast to the theory of harmonic maps (Hartman \cite{hartman}) there can exist multiple branched minimal immersions within the same homotopy class of maps (even with disjoint image) even when the curvature of the target is strictly negative.
In this case, the corresponding domain metrics must represent different points in Teichm\"uller space.
\end{rmk}

\begin{rmk}
Although we state our results for closed target manifolds, the proof extends to somewhat more general situations that are important for applications we have in mind. For example, if $N$ is noncompact but supports a proper convex function (which will imply  the no-bubbles hypothesis \cite{gordon}) then the theory extends, with the image of the flow remaining within a compact region of $N$. Moreover, the theory extends to the case that $u$ is a section of certain twisted bundles, cf. Donaldson \cite{donaldson}.
\end{rmk}

The assumption of nonexistence of bubbles 
will be used in two main ways in the proof of Theorem \ref{mainthm}. Using well-understood principles (e.g. \cite{SU, struweCMH, DT} etc.) it prevents the energy of an 
almost-harmonic map from being too concentrated in isolated regions, which in turn allows us to make dramatically improved estimates in our collar analysis and proves that bubbling singularities cannot occur in the flow.
An unconventional feature of our work is that the hypothesis of nonexistence of bubbles will also allow us ultimately to control the weighted energy $\hat I$ mentioned earlier.
One can show that the rate of change of $\hat I$ can be controlled in terms of $\hat I^2$, but this is not enough to prevent finite time blow-up. Instead, we manage to control the evolution of $\hat I$ in terms of
 the product of $\hat I$ with only the 'angular' part of the weighted energy and this latter quantity can be controlled effectively under the hypothesis that the target admits no bubbles.

A substantial part of the paper is devoted not so much to the flow \eqref{flow} but to the study of more general curves of hyperbolic metrics, and (as a result) to the study of holomorphic quadratic differentials and the corresponding projection operator $P_g$. Some of this may be of independent interest; for example whereas $P_g$ is bounded from $L^2$ to $L^2$ by definition, Proposition \ref{lemma:projection}, asserts the useful fact that $P_g$ is bounded from $L^1$ to $L^1$ independent of how degenerate the underlying metric $g$ is.

This paper is organised as follows. In Section \ref{sect_ruling} we prove a formula for $\frac{d\ell}{dt}$ as we move $g$ in a general direction $Re(P_g(\Psi))$ (Lemmata \ref{lengthevol} and \ref{thmf_length_evol}) and assemble the proof of our main Theorem \ref{mainthm} based on growth estimates for the weighted energy (Lemma \ref{lemma:Icontrol}).
In Section \ref{angularsect} we control the angular energy on collars, the main result being Lemma \ref{ang_en_est}.
In Section \ref{length:evol:sect} we develop our understanding of holomorphic quadratic differentials in order to prove the formula for $\frac{d\ell}{dt}$ given in Lemma \ref{lengthevol}.
In Section \ref{sec:I} we establish that the full weighted energy can grow at most exponentially fast (Lemma \ref{lemma:Icontrol}), the key result being Lemma \ref{lemma:I-zeppelin}, and hence will remain bounded over finite time intervals, as required in Section \ref{sect_ruling}.

\emph{Acknowledgements:} 
We thank Scott Wolpert, Sumio Yamada and Mike Wolf for discussions concerning the existing theory of Weil-Petersson geometry.
The second author was supported by 
EPSRC grant number EP/K00865X/1.

\section{Ruling out collar degeneration}
\label{sect_ruling}

In this section, we assemble the proof of Theorem \ref{mainthm}, giving a global smooth solution of our flow. 
The first  point to verify is that, as is well-known and claimed in the introduction, the nonexistence of bubbles in the target is a more general hypothesis than the nonpositivity of its curvature.

\begin{lemma}
\label{nobubbles}
If $(N,G)$ is a complete Riemannian manifold of nonpositive sectional curvature, then every harmonic map $S^2\to (N,G)$ is a constant map.
\end{lemma}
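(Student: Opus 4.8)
This is the classical Eells--Sampson vanishing theorem applied in dimension two, so I would give the standard Bochner argument. Let $u:S^2\to (N,G)$ be a smooth harmonic map. The Bochner--Eells--Sampson formula for the energy density $e(u)=\half|du|^2$ reads
\[
\half\lap\, e(u)=|\nabla du|^2+\langle \Ric^{S^2}\!(du),du\rangle-\langle \Rm^N(u)(du,du)du,du\rangle,
\]
where the middle term involves the Ricci curvature of the domain and the last term involves the sectional curvature of the target evaluated on the image of $du$. Under the hypothesis that $(N,G)$ has nonpositive sectional curvature, the target curvature term is $\geq 0$ (with the sign convention making it $-\langle\Rm^N(du,du)du,du\rangle\geq 0$), so on any domain with nonnegative Ricci curvature --- in particular $S^2$ with the round metric, where $\Ric>0$ --- every term on the right-hand side is nonnegative, giving $\lap\, e(u)\geq 0$.

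The second step is to invoke the maximum principle: $e(u)$ is a smooth nonnegative subharmonic function on the closed manifold $S^2$, hence constant, and moreover each term on the right-hand side vanishes identically. From $|\nabla du|^2\equiv 0$ we get $\nabla du\equiv 0$, i.e. $du$ is a parallel section of $T^*S^2\otimes u^*TN$; combined with the domain Ricci term $\langle\Ric^{S^2}(du),du\rangle\equiv 0$ and the strict positivity of $\Ric$ on the round $S^2$, we conclude $du\equiv 0$, so $u$ is constant.

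One point deserves a remark, and I expect it to be the only genuine subtlety: the lemma as stated allows $(N,G)$ to be merely \emph{complete} rather than compact, so one cannot immediately quote the compactness of $u(S^2)$ to bound target curvature. This is harmless: the image $u(S^2)$ is compact since $S^2$ is, so all the curvature quantities of $N$ appearing above are evaluated on a fixed compact set, and the Bochner computation and maximum principle argument go through verbatim on the closed domain $S^2$. (Alternatively, one notes that a harmonic map from a closed domain into a complete target has image in a compact set, which is all that is used.) I would state the Bochner formula with a reference to Eells--Sampson \cite{ES}, carry out the two short steps above, and close.
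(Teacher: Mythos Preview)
Your Bochner argument is correct and is one of the two standard routes to this fact. The paper takes the other: since $S^2$ is simply connected, the map lifts to the universal cover $\tilde N$, which by Cartan--Hadamard supports a globally strictly convex function (the squared distance to any fixed point, cf.~\cite{BO}); composing this with the lifted harmonic map gives a subharmonic function on the closed manifold $S^2$, hence constant, which by strict convexity forces $du\equiv 0$ (Gordon \cite{gordon}). So the paper's argument trades the Bochner identity and the positivity of $\Ric^{S^2}$ for the simple connectivity of $S^2$ (needed to lift) together with the existence of a convex function on the cover. Both proofs are short; the convex-function route has the minor advantage here of dovetailing with the paper's later remark that the whole theory extends to noncompact targets admitting a proper convex function.
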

\begin{proof}
By lifting to the universal cover, we may assume that $(N,G)$ is simply connected. The squared distance function $d^2(x_0,\cdot)$ to any fixed point $x_0\in N$ is a strictly convex function because of the nonpositive curvature \cite{BO}, and any harmonic map from any closed Riemannian manifold into any Riemannian manifold supporting a convex function is necessarily constant because the composition of the map and the convex function must be subharmonic \cite{gordon}.
\end{proof}

The starting point of the proof of Theorem \ref{mainthm}
is the weak solution constructed in \cite{rupflin_existence}, that exists until such a time $T$ that the length of the shortest closed geodesic converges to zero. 
As is well understood from \cite{struweCMH, rupflin_existence}, the only obstruction to a weak solution being smooth is the development of singularities at which 
one can perform a standard rescaling procedure to extract a bubble, i.e. a nonconstant harmonic map from $S^2$ to the target $(N,G)$
(cf. \cite{RTZ} and \cite{struweCMH}). However, no such bubble  exists in our situation by hypothesis.

Therefore, in the context of Theorem \ref{mainthm}, it remains
to show that the length of the shortest closed geodesic has a positive lower bound over arbitrary finite time intervals.

\subsection{Basics of Teichm\"uller theory}

Fix an oriented closed surface $M$ of genus $\ga\geq 2$, and consider the space $\m_{-1}$ of metrics $g$ on $M$ of constant Gauss curvature $-1$.
It is well understood (see for example \cite{RTZ} and Lemma \ref{lemma:collar}) that $(M,g)$ decomposes into a \emph{thick part} consisting of all points at which the injectivity radius is at least $\arsinh(1)$, and a finite collection of disjoint \emph{collar regions} $\Col_j$ for $1\leq j\leq k$.
Each collar region has at its centre a simple closed geodesic 
$\si_j$ of length $\ell_j$. (See Lemma \ref{lemma:collar}.)

Tangent vectors in $\m_{-1}$ at $g$ can be decomposed into the sum of a Lie derivative term $\lie_X g$ (corresponding to a change in parametrisation of the same metric) and a term of the form 
$Re(\Th)$, where $\Th$ lies in the $(3\ga-3)$-dimensional complex vector space $\Hol(M,g)$ of holomorphic quadratic differentials
(see e.g. \cite{tromba}).
Ultimately this allows us to view a smooth path in Teichm\"uller space as a 
smooth family $g(t)$ of metrics in $\M$ for which 
$\pl{g}{t}=Re(\Th(t))$ for some smooth family 
$\Th(t)\in\Hol(M,g(t))$, and it is these \emph{horizontal curves} that we study below.
In the next section we will need to understand how the lengths $\ell_j$ of the geodesics $\si_j$ evolve as $g(t)$ evolves in this way.

\subsection{Proof of the main theorem ruling out collar degeneration}

Our basic set-up for Theorem \ref{mainthm} is that we have a one-parameter family of metrics $g(t)$ evolving under the equation
$\pl{g}{t}= Re(P_g(\Psi(t)))$, with 
$\Psi(t)=\frac{\eta^2}{4} \Phi(u,g)$, and we need to control the evolution of the length of the shortest closed geodesic in order to prevent it from decreasing to zero in finite time, which would correspond to the degeneration of a collar. Our basic result in this direction is the following lemma, which may be of independent interest. 
We write $\Qu_{L^2}(M,g)$ for the infinite dimensional space 
of measurable quadratic differentials on $(M,g)$ with finite $L^2$ norm.
To fix normalisations, take an arbitrary local complex coordinate $z=x+iy$ and write $g=\rho^2(dx^2+dy^2)$, so 
$\Psi=\psi (dx+idy)^2$ for some locally defined complex valued $L^2$ function $\psi$. We are then normalising so that
$$\norm{\Phi}_{L^2(M,g)}^2=\int_M\abs{\psi}^2\abs{dz^2}^2 dv_g=4\int_M \rho^{-2}\abs{\psi}^2 dx\wedge dy.$$
In the lemma, we consider only the complex coordinate $z:=s+i\th$, where $(s,\th)$ are cylindrical coordinates on the collar  
(see Lemma \ref{lemma:collar}) and the corresponding $dz^2=(ds+id\th)^2$.


\begin{lemma}
\label{lengthevol}
Given an oriented closed surface $M$ of genus $\ga\geq 2$, there exists 
$C<\infty$ depending only on $\ga$ such that the following is true.
Suppose $g(t)$ is a smooth one-parameter family of metrics in $\M$ for $t$ in a neighbourhood of $0$ such that 
at $t=0$, we have 
$$\pl{g}{t}=Re(P_g(\Psi))\quad\text{ for some }\quad\Psi\in \Qu_{L^2}(M,g(0)),$$ 
and we have a collar $\Col$ in $(M,g(0))$ around a simple closed geodesic of length $\ell< 
2\arsinh(1)$.
Then 
$$\frac{d\ell}{dt}\sim -\frac{\ell^2}{16\pi^3}Re\langle\Psi,dz^2\rangle_{L^2(\Col,g)},$$
at $t=0$, in the sense that 
$$\bigg|\frac{d\ell}{dt}+\frac{\ell^2}{16\pi^3}Re\langle\Psi,dz^2\rangle_{L^2(\Col,g)}\bigg|
\leq C\ell^2\|\Psi\|_{L^1(M,g)}.$$
\end{lemma}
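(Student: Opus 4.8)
The plan is to compute $\frac{d\ell}{dt}$ by first establishing a formula for the derivative of the length of a closed geodesic along an arbitrary horizontal curve $\pl{g}{t}=Re(\Theta)$ with $\Theta$ holomorphic, and then to specialise to $\Theta=P_g(\Psi)$, isolating the contribution from the collar $\Col$ and controlling the remainder in terms of $\|\Psi\|_{L^1}$. The first ingredient is purely Teichm\"uller-theoretic: using the collar coordinates $(s,\th)$ of Lemma~\ref{lemma:collar} in which $g$ takes the standard hyperbolic collar form and $dz=ds+id\th$, one knows (this is essentially the Gardiner–Wolpert type formula, and I would cite/reprove it as Lemma~\ref{thmf_length_evol} referenced in the introduction) that the length of the central geodesic is a smooth function on Teichm\"uller space whose differential pairs against a holomorphic quadratic differential $\Theta$ via an explicit integral. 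The key point is that this differential is, up to the universal constant $\frac{1}{16\pi^3}$ and a factor $\ell^2$, given by $-Re\langle\Theta,dz^2\rangle_{L^2(\Col,g)}$ modulo errors that are $O(\ell^2)$ times the mass of $\Theta$ away from the thin part; this uses that on a long collar the flat metric $|dz|^2$ and the hyperbolic metric are comparable only after the $\ell$-dependent rescaling, and that holomorphic quadratic differentials on a hyperbolic surface have controlled $L^1\to L^1$ behaviour (Proposition~\ref{lemma:projection}).

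Second, I would replace $\Theta=P_g(\Psi)$ and split $\langle P_g(\Psi),dz^2\rangle_{L^2(\Col)}$. Since $dz^2$ is itself (essentially) holomorphic on the collar, the $L^2$-orthogonal projection $P_g$ can be moved across the pairing: $\langle P_g(\Psi),dz^2\rangle_{L^2(M)} = \langle \Psi, P_g(dz^2)\rangle_{L^2(M)}$ is not quite what is wanted since we pair only over $\Col$, so instead I would write $\langle P_g(\Psi),dz^2\rangle_{L^2(\Col)} = \langle \Psi,dz^2\rangle_{L^2(\Col)} + \langle P_g(\Psi)-\Psi,dz^2\rangle_{L^2(\Col)}$ and argue that the error term, together with the contribution of $P_g(\Psi)$ on the thick part and the other collars, is bounded by $C\ell^2\|\Psi\|_{L^1(M,g)}$. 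Here the two facts doing the work are: (i) $\|P_g(\Psi)\|_{L^1(M,g)}\le C\|\Psi\|_{L^1(M,g)}$ with $C$ depending only on $\ga$ (Proposition~\ref{lemma:projection}), which is exactly the degeneration-independent bound advertised in the introduction; and (ii) on the collar the pointwise norm of $dz^2$ with respect to $g$ is $O(\ell^2)$ near the ends and controlled throughout, so pairing any $L^1$ differential against $dz^2$ over $\Col$ costs a factor $\ell^2$ — more precisely $\langle P_g(\Psi),dz^2\rangle_{L^2(\Col)}$ differs from its ``holomorphic principal part'' by such an error.

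Third, I would assemble these: the main term $-\frac{\ell^2}{16\pi^3}Re\langle\Psi,dz^2\rangle_{L^2(\Col,g)}$ emerges from the leading behaviour, and all discarded pieces — the non-collar part of the length differential, the difference between $P_g(\Psi)$ and $\Psi$ tested against $dz^2$, and the higher-order terms in the collar expansion — are each estimated by $C\ell^2\|\Psi\|_{L^1(M,g)}$, giving the stated inequality. I expect the main obstacle to be step two: controlling $\langle P_g(\Psi)-\Psi,dz^2\rangle_{L^2(\Col)}$ uniformly as $\ell\to 0$. Naively $\Psi$ need only be in $L^2$, and $P_g(\Psi)-\Psi$ is $L^2$-orthogonal to holomorphic differentials but need not be small; the resolution must exploit that $dz^2$ restricted to the collar is close to being holomorphic (its failure is concentrated near the collar boundary where $|dz^2|_g$ is already $O(\ell^2)$), so the pairing of $\Psi-P_g(\Psi)$ against it is governed by that boundary defect rather than by the full size of $\Psi-P_g(\Psi)$. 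Making this rigorous requires the precise collar geometry of Lemma~\ref{lemma:collar} and the $L^1$-boundedness of $P_g$, and is where the constant depending only on the genus really enters.
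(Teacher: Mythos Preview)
Your overall architecture is the same as the paper's: first reduce to a formula involving only the holomorphic projection $P_g(\Psi)$, then pass from $P_g(\Psi)$ to $\Psi$ on the collar. The first step is fine and indeed the paper obtains the exact identity $\frac{d\ell}{dt}=-\frac{2\pi^2}{\ell}\,Re\big(b_0(P_g(\Psi),\Col)\big)$ simply by integrating $\partial_t g_{\th\th}$ over the central geodesic and observing that the collar decay part has vanishing zeroth Fourier mode. Your real difficulty is in the second step, and there your argument has a genuine gap.

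The crucial estimate you need is
\[
\big|\langle P_g(\Psi)-\Psi,\,dz^2\rangle_{L^2(\Col)}\big|\le C\,\|\Psi\|_{L^1(M,g)},
\]
which is equivalent to the principal-part estimate in Proposition~\ref{lemma:projection}. You propose to get this from the $L^1$--$L^1$ bound on $P_g$ together with the claim that ``on the collar the pointwise norm of $dz^2$ with respect to $g$ is $O(\ell^2)$ near the ends.'' That claim is false: by \eqref{sizes_on_collars} one has $|dz^2|_g=2\rho^{-2}$, which is of order $1$ at the collar ends and of order $\ell^{-2}$ at the centre. The naive bound therefore gives only $|\langle P_g(\Psi)-\Psi,dz^2\rangle_{L^2(\Col)}|\le C\ell^{-2}\|\Psi\|_{L^1}$, which after multiplication by $\ell^2/16\pi^3$ yields an error of size $C\|\Psi\|_{L^1}$ rather than $C\ell^2\|\Psi\|_{L^1}$. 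Your alternative idea, that $dz^2$ is ``close to holomorphic'' with the defect living where $|dz^2|_g$ is small, founders on the same miscalculation: the defect after cutting off sits where $|dz^2|_g\sim 1$, and moreover $\|dz^2\|_{L^1(\Col)}\sim \ell^{-1}$, so feeding a cut-off $dz^2$ back into Proposition~\ref{lemma:projection} does not close the loop.

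What the paper actually does is prove the principal-part estimate of Proposition~\ref{lemma:projection} directly, and this is where the substance lies: one constructs (Lemmata~\ref{Wnperp} and~\ref{lemma:basis}) an adapted $L^2$-orthonormal basis $\{\Om_n^j\}$ of $W_n^\perp$ with each $\Om_n^j$ concentrated on one degenerating collar and satisfying sharp $L^1$, $L^\infty$ and collar-decay bounds (e.g.\ $\|\Om_n^j\|_{L^1}\le C(\ell_n^j)^{1/2}$, $\|\Om_n^j\|_{L^\infty}\le C(\ell_n^j)^{-1/2}$, and $\Om_n^j\approx \beta_n^j\,dz^2$ on $\Col_n^j$ with $\beta_n^j\sim \ell^{3/2}/\sqrt{32\pi^5}$). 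Writing $P_g(\Psi)=P_g^{W_n}(\Psi)+\sum_j\langle\Psi,\Om_n^j\rangle\Om_n^j$ and reading off $b_0$ term by term then gives the required control. The $L^1$-boundedness of $P_g$ alone is not enough; you need this finer structural information about $\Hol(M,g)$ near the boundary of moduli space.
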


Note that the content of this lemma revolves around the fact that $\Psi$ need not be holomorphic but
rather can be any element of 
$\Qu_{L^2}(M,g)$. 
One could get a feel for Lemma  \ref{lengthevol} by using it to reprove the incompleteness of Teichm\"uller space, 
in which case we take $\Psi$ to be $dz^2$ on the collar and zero elsewhere, and the error term is a factor of $\ell^2$ smaller than the leading term for $\frac{d\ell}{dt}$ (see Section \ref{TeichIncomp}). 
Formulae for the first and even second derivatives of $\ell$ of a quite different flavour to ours
can be found in \cite{gardiner, wolf} and the references therein.

Lemma \ref{lengthevol} will be proved in 
Section \ref{length:evol:sect}, based on an analysis of the space of holomorphic quadratic differentials. We use it now in the special case of the Teichm\"uller harmonic map flow, to prove:

\begin{lemma}
\label{thmf_length_evol}
Let $(u,g)$ be a smooth
solution of the Teichm\"uller harmonic map flow \eqref{flow}
defined on a surface of genus at least two and on a time interval $[0,T)$.
Given a collar $\Col$ in $(M,g)$ at time $t$, with central geodesic $\si$ of length $\ell<2\arsinh(1)$ 
we have
\beq  \label{est:ddl} \bigg|\frac{d}{dt}\log(\ell)+\frac{\eta^2}{16\pi^3}\cdot \ell \int_\Col (\abs{u_s}^2-\abs{u_\theta}^2)\rho^{-2} dsd\theta\bigg|
\leq C \ell \eta^2  E_0,\eeq
where $E_0$ is an upper bound for the energy and $C<\infty$ depends only on the genus $\ga$. 
In particular, the evolution of $\ell$ is controlled in terms of a weighted energy
$$I:=\int_\Col e(u,g)\rho^{-2}dv_g,$$
where $e(u,g)=\half(|u_s|^2+|u_\th|^2)\rho^{-2}$ is the energy density and  $\rho$ is the conformal factor defined in Lemma \ref{lemma:collar}, 
in the sense that 
$$\bigg|\frac{d}{dt}\log\ell\bigg|\leq C\ell \big[I+E_0\big],$$
with $C$ depending only on $\ga$ and the coupling constant $\eta$.
\end{lemma}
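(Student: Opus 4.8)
The plan is to apply Lemma \ref{lengthevol} to the Teichm\"uller harmonic map flow, where the evolution of the metric is $\pl{g}{t}=\frac{\eta^2}{4}Re(P_g(\Phi(u,g)))$, so we take $\Psi=\frac{\eta^2}{4}\Phi(u,g)$. The first task is to identify the pairing $Re\langle\Psi,dz^2\rangle_{L^2(\Col,g)}$ appearing in Lemma \ref{lengthevol} in terms of the map $u$. Recall that the Hopf differential in the conformal coordinate $z=s+i\theta$ on the collar is $\Phi(u,g)=\phi\,dz^2$ with $\phi=|u_s|^2-|u_\theta|^2-2i\langle u_s,u_\theta\rangle$ (up to the standard normalisation), so that $Re\langle\Phi,dz^2\rangle_{L^2(\Col,g)}=Re\int_\Col \phi\,\rho^{-2}\,ds\,d\theta=\int_\Col(|u_s|^2-|u_\theta|^2)\rho^{-2}\,ds\,d\theta$, using that $|dz^2|^2=\rho^{-4}\cdot\rho^2$ pairs against the quadratic differential to produce the $\rho^{-2}$ weight as normalised just before the lemma. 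Multiplying by $\frac{\eta^2}{4}$ and by the factor $\frac{\ell^2}{16\pi^3}$ from Lemma \ref{lengthevol}, and dividing through by $\ell$ to pass from $\frac{d\ell}{dt}$ to $\frac{d}{dt}\log\ell=\frac1\ell\frac{d\ell}{dt}$, yields exactly the main term $\frac{\eta^2}{16\pi^3}\cdot\ell\int_\Col(|u_s|^2-|u_\theta|^2)\rho^{-2}\,ds\,d\theta$ in \eqref{est:ddl}.

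For the error term, Lemma \ref{lengthevol} gives a bound of $C\ell^2\|\Psi\|_{L^1(M,g)}=C\ell^2\frac{\eta^2}{4}\|\Phi(u,g)\|_{L^1(M,g)}$; after dividing by $\ell$ this is $\leq C\ell\eta^2\|\Phi(u,g)\|_{L^1(M,g)}$. The key point is then the pointwise bound $|\Phi(u,g)|\leq 2e(u,g)$ (the Hopf differential is controlled by the energy density), so $\|\Phi(u,g)\|_{L^1(M,g)}\leq 2\int_M e(u,g)\,dv_g=2E(u,g)\leq 2E_0$, where $E_0$ bounds the energy — which is monotone nonincreasing along the flow by \eqref{energy-identity}, so any bound on the initial energy works for all time. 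This gives \eqref{est:ddl} with an adjusted constant $C=C(\ga)$.

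The final ``in particular'' statement then follows by the triangle inequality: $|\frac{d}{dt}\log\ell|\leq \frac{\eta^2}{16\pi^3}\ell\int_\Col||u_s|^2-|u_\theta|^2|\rho^{-2}\,ds\,d\theta + C\ell\eta^2 E_0 \leq \frac{\eta^2}{16\pi^3}\ell\int_\Col(|u_s|^2+|u_\theta|^2)\rho^{-2}\,ds\,d\theta + C\ell\eta^2 E_0 = \frac{\eta^2}{8\pi^3}\ell I + C\ell\eta^2 E_0$, which is of the form $C\ell[I+E_0]$ with $C$ depending on $\ga$ and $\eta$. I do not expect any serious obstacle here: the whole lemma is essentially a substitution into Lemma \ref{lengthevol} together with the elementary estimate $|\Phi|\leq 2e(u,g)$; the only mild care needed is matching the normalisation conventions for the pairing $\langle\cdot,dz^2\rangle_{L^2}$ and the Hopf differential against those fixed in the paragraph following Lemma \ref{lengthevol}, so that the constant $16\pi^3$ comes out correctly.
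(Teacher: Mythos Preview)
Your proposal is correct and follows essentially the same route as the paper: apply Lemma \ref{lengthevol} with $\Psi=\frac{\eta^2}{4}\Phi(u,g)$, compute $Re\langle\Psi,dz^2\rangle_{L^2(\Col,g)}$ explicitly on the collar, bound $\|\Phi\|_{L^1}$ by a multiple of the energy, and divide by $\ell$. Your caveat about normalisations is well placed---the paper uses $|dz^2|^2=4\rho^{-4}$ (from \eqref{sizes_on_collars}) so that $Re\langle\Phi,dz^2\rangle_{L^2(\Col,g)}=4\int_\Col(|u_s|^2-|u_\theta|^2)\rho^{-2}\,ds\,d\theta$ and $\|\Phi\|_{L^1}\leq 4E_0$, which fixes the factor-of-four slips in your intermediate steps and makes the constant $16\pi^3$ come out as stated.
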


\begin{proof}
Under Teichm\"uller harmonic map flow, the metric $g$ evolves
according to
$$\pl{g}{t}=\frac{\eta^2}{4}Re(P_g(\Phi(u,g))),$$
where $\Phi(u,g)=(|u_s|^2-|u_\th|^2-2i\langle u_s,u_\th\rangle)dz^2$ on the collar.
Therefore in the language of Lemma \ref{lengthevol}, we have
$$Re\langle\Psi,dz^2\rangle_{L^2(\Col,g)}=\frac{\eta^2}{4}
\int_\Col (|u_s|^2-|u_\th|^2)|dz^2|^2\rho^2dsd\th
=\eta^2\int_\Col (|u_s|^2-|u_\th|^2)\rho^{-2}dsd\th,$$
by \eqref{sizes_on_collars}.
Note also that 
$\|\Phi\|_{L^1}\leq 4 E_0$, so 
$$\|\Psi\|_{L^1(M,g)}\leq \eta^2 E_0.$$
Therefore Lemma \ref{lengthevol} implies that
\beqs
-\frac{d}{dt}\log\ell\sim \frac{\eta^2}{16\pi^3}\cdot \ell \int_\Col (\abs{u_s}^2-\abs{u_\theta}^2)\rho^{-2} dsd\theta \eeqs
up to an error that is bounded by \eqref{est:ddl} for $C$  depending only on the genus $\ga$.
\end{proof}

We see from Lemma \ref{thmf_length_evol} that we could deduce that $\ell(g(t))$ does not decrease to zero in finite time if we could prove that $\ell I$ remains bounded over 
arbitrary compact time intervals. In fact, we will prove the stronger statement that $I$ itself remains bounded. 
This will imply that the smooth solution of the Teichm\"uller harmonic map flow discussed above
must exist for all time,
which will complete the proof of Theorem \ref{mainthm}.
\newcommand{\I}{{\mathcal{I}}}

\begin{lemma}
\label{lemma:Icontrol}
Suppose $M$, $(N,G)$ and $\M$ are as above, with $(N,G)$ 
supporting no bubbles. Then for any $E_0>0$ there exists a constant
$C<\infty$ such that the following holds true. 
Let $(u,g)$ be any solution of the Teichm\"uller harmonic map flow \eqref{flow}, defined on an interval $[0,T)$, with initial energy $E(u(0),g(0))\leq E_0$. 
Then for any time $t\in [0,T)$ such that $(M,g(t))$ contains a collar $\Col$ with central geodesic of length $\ell< 2\arsinh(1)$, 
we can estimate the weighted energy defined above by
$$I:=\int_\Col e(u,g)\rho^{-2}dv_g\leq Ce^{Ct}
(1+(\inj_{g(0)}M)^{-2}),$$
where $\inj_g M:=\inf_{x\in M}\inj_g(x)$.
\end{lemma}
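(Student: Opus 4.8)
The strategy is to derive a differential inequality for $I$ of the form $\frac{d}{dt}I\le C(1+I)$, plus the right initial control, so that Gr\"onwall gives the exponential bound. First I would differentiate $I=\int_\Col e(u,g)\rho^{-2}\,dv_g$ in time. Three kinds of term appear: (i) the variation of the energy density $e(u,g)$ coming from $\pt u=\tau_g(u)$, which after a Bochner-type computation produces a good negative term $-\int_\Col|\nabla du|^2\rho^{-2}$ together with curvature terms that are controlled using the no-bubbles hypothesis (which, via standard $\eps$-regularity for almost-harmonic maps \cite{SU,struweCMH,DT}, bounds the energy density away from concentration points); (ii) the variation coming from $\pt g=\frac{\eta^2}{4}Re(P_g(\Phi))$ acting on $e(u,g)$ and on $dv_g$, which is controlled using the $L^1$-boundedness of $P_g$ (Proposition \ref{lemma:projection}) together with the pointwise control of $e(u,g)$; and (iii) the variation of the weight $\rho^{-2}$ and of the collar region $\Col$ itself, since both the conformal factor and the geometry of the collar move as $g$ moves. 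For the weight I would use Lemma \ref{thmf_length_evol} to express $\frac{d}{dt}\log\ell$ and the corresponding evolution of $\rho$ in terms of $I$ and $E_0$, so that this term contributes something like $C\ell I\cdot I$, i.e.\ an $I^2$ term.

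The crucial observation—flagged in the introduction—is that the naive bound $\frac{d}{dt}I\le C(1+I^2)$ is \emph{not} good enough, since it permits finite-time blow-up. The refinement is that the dangerous quadratic term actually only involves $I$ multiplied by the \emph{angular} part of the weighted energy, roughly $\hat I_\theta=\int_\Col|u_\theta|^2\rho^{-2}\,dv_g$ (or a suitably localized version thereof, cf.\ $\hat I$ in the introduction), rather than all of $I$. This is because the length evolution in Lemma \ref{thmf_length_evol} is driven by $\int_\Col(|u_s|^2-|u_\theta|^2)\rho^{-2}$, and the mechanism that could make $\rho$ (hence the weight) grow catastrophically is tied to the angular energy concentrating on the collar. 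Thus I would prove an inequality of the schematic form
$$\frac{d}{dt}I\le C\big(1+I\big)+C\,I\cdot\Big(\text{angular energy on }\Col\Big),$$
and then invoke the separate control on the angular energy: by Lemma \ref{ang_en_est} the angular energy on collars is bounded (this is exactly where the no-bubbles hypothesis is used a second time, to run the neck analysis of Section \ref{angularsect} without assuming the collar energy is small). Feeding that bound in collapses the inequality to $\frac{d}{dt}I\le C(1+I)$.

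For the initial data, at $t=0$ the metric $g(0)$ is smooth hyperbolic, so $\rho^{-2}\le C(\inj_{g(0)}(x))^{-2}$ pointwise on the collar by the explicit collar coordinates of Lemma \ref{lemma:collar}; hence $I(0)\le \tfrac12\int_M|du_0|^2(\inj_{g(0)})^{-2}\,dv_{g(0)}\le C E_0(\inj_{g(0)}M)^{-2}\le CE_0(1+(\inj_{g(0)}M)^{-2})$. Then Gr\"onwall applied to $\frac{d}{dt}I\le C(1+I)$ yields $I(t)\le Ce^{Ct}(1+(\inj_{g(0)}M)^{-2})$, as claimed. One subtlety is that $\Col$, and even the set of collars, changes with $t$ and a given collar may only exist on a subinterval; I would handle this by deriving the estimate on the full weighted energy $\hat I$ over all of $M$ (which is what Section \ref{sec:I}, and in particular Lemma \ref{lemma:I-zeppelin}, is set up to do) and then noting $I\le\hat I$ for any individual collar.

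\textbf{Main obstacle.} The hard part is establishing the refined differential inequality: showing that the genuinely quadratic-in-$I$ contribution to $\frac{d}{dt}I$ can be absorbed into $I$ times only the \emph{angular} energy, and not times $I$ itself. This requires a careful collar analysis tracking how $\Phi(u,g)$, the projection $P_g(\Phi)$, and the resulting change in the conformal geometry interact, handling the fact that the tension field is not small and the collar energy is not small—so the standard neck-analysis/Hopf-differential shortcuts are unavailable—and instead extracting the needed structure from the $L^1\to L^1$ bound on $P_g$ and the angular energy estimate. Getting the error terms to be merely linear in $I$ (times bounded quantities) rather than quadratic is the technical heart of the argument, and is precisely what Lemma \ref{lemma:I-zeppelin} is designed to deliver.
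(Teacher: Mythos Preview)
Your high-level strategy matches the paper's: establish a differential inequality for a (smoothed) weighted energy whose would-be quadratic term is actually $\mathcal{I}$ times only the \emph{angular} weighted energy $I^{(\theta)}$, invoke Lemma~\ref{ang_en_est}, and apply Gr\"onwall. Two points in your sketch need correction.

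First, Lemma~\ref{ang_en_est} does \emph{not} say $I^{(\theta)}$ is uniformly bounded; it gives $I^{(\theta)}\le C(1+\|\tau_g(u)\|_{L^2}^2)$. So the inequality you obtain (this is exactly Lemma~\ref{lemma:I-zeppelin}) is $\frac{d}{dt}\log(1+\mathcal{I})\le C(1+\|\tau_g(u)\|_{L^2}^2)$, not $\frac{d}{dt}I\le C(1+I)$. Integration still works, but only because $\int_0^{t_0}\|\tau_g(u)\|_{L^2}^2\,dt\le E_0$ from the energy identity~\eqref{energy-identity}; you must invoke that step. Second, your account of \emph{why} the $I^2$ term disappears is off. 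It is not driven by the $|u_s|^2-|u_\theta|^2$ structure of Lemma~\ref{thmf_length_evol}. The dangerous contribution is $\tfrac12\int\partial_t(g^{ss}\rho^{-2})|u_s|^2\varphi^2\,dv_g$, a priori of order $\ell\,\mathcal{I}^2$. The paper disposes of it via a genuine cancellation: on the collar $\partial_t g_{ss}=Re(b_0)+Re(\omega^\perp)_{ss}$, while Lemma~\ref{lemma:rho} gives $\partial_t(\rho^2)=-Re(b_0)+O(e^{-1/\rho})$, so $g^{ss}\partial_t(\rho^{-2})+\rho^{-2}\partial_t(g^{ss})=O(\rho^{-6}e^{-1/\rho})$ and this term is in fact bounded by $CE_0$. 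The $I^{(\theta)}\mathcal{I}$ structure then enters only through the $|u_\theta|^2$ and cross terms (via $Re(b_0)$, $Im(b_0)$ and the decay of $\omega^\perp$). Relatedly, the paper does not use a Bochner identity for the $\partial_t u$ contribution: it multiplies the flow equation by $\rho^{-2}\varphi^2\,\partial_t u$ and integrates, the good term being $\int|\partial_t u|^2\rho^{-2}\varphi^2\,dv_g$, with cross terms absorbed by Young's inequality; the no-bubbles hypothesis is used \emph{only} through Lemma~\ref{ang_en_est}, not via pointwise $\varepsilon$-regularity on the energy density.

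One smaller difference: the paper does not pass to the global $\hat I$. It follows the central geodesic of the given collar backwards as a continuous family $\sigma(t)$ to the minimal time $t_{\min}$ at which its length stays below $2\arsinh(1)$; if $t_{\min}>0$ then $\ell(t_{\min})=2\arsinh(1)$ and $\mathcal{I}(t_{\min})\le CE_0$ serves as initial data, while if $t_{\min}=0$ one uses your bound $\mathcal{I}(0)\le C(\inj_{g(0)}M)^{-2}$.
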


We remark that at each point of such a collar the injectivity radius $\inj_g(p)$ and the conformal factor $\rho(p)$ 
are of comparable size, see \eqref{est:rho-inj-appendix} and \eqref{est:rho-by-inj}, 
so bounding $I$ on each such collar is indeed equivalent to bounding the global weighted energy $\hat I$ briefly mentioned before. 

We will prove Lemma \ref{lemma:Icontrol} in Section \ref{sec:I}, but before that, in Section \ref{angularsect} we will derive estimates on a weighted \emph{angular} energy alone --  see Lemma \ref{ang_en_est}. This is ironic given that a careful reading
of \eqref{est:ddl} shows that large angular energy appears to help \emph{prevent} degeneration of the neck. 
However, the proof of Lemma \ref{lemma:Icontrol} will bootstrap angular energy estimates to full energy estimates.
One might expect this to follow using the Hopf differential $\Phi(u,g)$, which in some sense measures the difference between angular energy and full energy. 
However, although estimates on the Hopf differential
do follow from the flow equations, the obvious ones are not strong enough for our purposes, and so instead we use a dynamic argument
in the proof of Lemma \ref{lemma:Icontrol}. 
That part is based on a precise understanding of the evolution of the metric, and in particular the weight $\rho^{-2}$, on a collar,
as well as the smallness of the angular energy, which in turn crucially uses the assumption of nonexistence of bubbles.

\subsection{Asymptotics}
\label{section:asympt}

In this section we make the final observations required to prove
Theorem \ref{thm:asymptotics}. 
Theorem \ref{mainthm} already gives global smooth existence for the Teichm\"uller harmonic map flow, and \cite[Theorem 1.1]{RTZ} and \cite[Theorem 1.4]{RT} describe the decomposition of the
flow into branched minimal immersions with the required level of convergence except at a finite set $S$ of points in $M$ at which bubbling occurs; but $S$ must be empty by the no-bubbles hypothesis on $(N,G)$.

Note that while 
\cite[Theorem 1.1]{RTZ} and \cite[Theorem 1.4]{RT} only state convergence of $f_n^*u(t_n)$ in each $W^{1,p}_{loc}$, $p<\infty$, in the proof the sequences are chosen so that 
$\norm{\tau_g(u)(t_n)}_{L^2}\to 0$, which, when combined with the convergence of the metrics and the local $W^{1,p}$ convergence away from $S$, implies
that $\Delta_{h}(f_n^*u(t_n)-u_{\infty})$ converges to zero (strongly) in $L^2_{loc}(\Si\setminus S)$, thus giving the desired local $H^2$ convergence.

\section{Controlling the angular energy}
\label{angularsect}

Our goal in this section is to control a weighted angular energy that is similar to the weighted energy $I$ from Lemma \ref{thmf_length_evol}, but only considers $\th$ derivatives.
More precisely, we prove the following key lemma.

\begin{lemma}
\label{ang_en_est}
For any $E_0<\infty$ and closed Riemannian manifold $N$ not supporting any bubbles, 
there exists $C<\infty$ 
such that for any $\ell\in (0,2\arsinh(1))$ and map $u:(\Col(\ell),g)\to N$ from a hyperbolic collar, with energy $E(u)\leq E_0$, the angular energy is
controlled according to
$$I^{(\th)}:=
\int_{\Col(\ell)}
\rho^{-2}|u_\th|^2d\th ds
\leq C(1+\|\tau_g(u)\|_{L^2(\Col(\ell),g)}^2).$$
\end{lemma}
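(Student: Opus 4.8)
The plan is to argue by contradiction and blow-up. Suppose the estimate fails: then there is a sequence $\ell_n\in(0,2\arsinh(1))$ and maps $u_n:(\Col(\ell_n),g_n)\to N$ with $E(u_n)\leq E_0$ but $I^{(\th)}(u_n)\geq n(1+\|\tau_{g_n}(u_n)\|_{L^2}^2)$. Write the collar in the standard cylindrical coordinates $(s,\th)\in(-X(\ell_n),X(\ell_n))\times(0,2\pi)$ of Lemma \ref{lemma:collar}, with conformal factor $\rho_n(s)$, where $X(\ell_n)\to\infty$. Since the weight $\rho_n^{-2}$ is largest near the centre $s=0$ and decays as one moves out along the collar, the weighted angular energy $I^{(\th)}$ concentrates where $\rho_n$ is small, i.e.\ on a subcollar near the central geodesic; this is precisely the region where the injectivity radius is small. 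The idea is to rescale so that this central region becomes a fixed-size cylinder (or, after further blow-up, the plane $\R^2$ or the sphere $S^2$), extract a limiting map, and show that the limit is a nonconstant harmonic map from $S^2$ — contradicting the no-bubbles hypothesis.

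Concretely, first I would normalise: since $\|\tau_{g_n}(u_n)\|_{L^2(\Col)}^2$ is small compared to $I^{(\th)}(u_n)$, after dividing we may assume $\|\tau_{g_n}(u_n)\|_{L^2}\to 0$ while $I^{(\th)}(u_n)\to\infty$ but $E(u_n)$ stays bounded. Thus the $u_n$ are \emph{almost harmonic} with uniformly bounded energy, but the \emph{weighted} angular energy blows up. Because the full energy is bounded, standard $\eps$-regularity for almost harmonic maps (e.g.\ \cite{DT, struweCMH}, as invoked in the introduction) gives interior estimates away from finitely many points of energy concentration on the cylinder; conversely, the blow-up of the weighted quantity $\int\rho_n^{-2}|u_{n,\th}|^2\,d\th\,ds$ forces genuine angular energy — at least some fixed amount $\eps_0>0$ — to sit on a portion of the collar where $\rho_n$ is as small as $\sim e^{-cX(\ell_n)}$, i.e.\ deep in the collar. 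I would then choose points $p_n$ in the collar realising this concentration, translate in $s$ so that $p_n$ moves to $s=0$, and rescale the flat cylindrical metric by $\rho_n(0)^{-1}$ (equivalently pass to the conformally equivalent flat picture). On any fixed compact set this exhibits $u_n$, after reparametrisation, as an almost-harmonic map from a larger and larger flat cylinder — or, if the concentration is in fact at a point, from larger and larger disks — with bounded energy and tension tending to zero in $L^2$.

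Next I would pass to a limit. By the bubbling analysis for almost harmonic maps with bounded energy (Sacks--Uhlenbeck / Struwe-type; cf.\ \cite{SU, struweCMH, DT}), a subsequence converges, away from finitely many points, to a harmonic map defined on the limiting domain, and near each concentration point one rescales further to bubble off a nonconstant harmonic map from $\C\cong S^2\setminus\{pt\}$, which by removable singularities extends to a nonconstant harmonic $S^2\to N$. The crucial point is that the divergence of $I^{(\th)}$ guarantees that at least one such rescaling captures a nontrivial amount of \emph{angular} energy, so the limit cannot be constant: a cylinder's worth of angular energy, when it refuses to vanish under the weight, must come from a nontrivial $S^2$-bubble (a harmonic map on a full cylinder $\R\times S^1$ with finite energy is either constant or, after the conformal identification $\R\times S^1\cong\C\setminus\{0\}$, extends across both punctures to give an $S^2$-bubble). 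This produces a nonconstant harmonic map $S^2\to N$, contradicting the hypothesis that $N$ supports no bubbles, and the lemma follows.

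The main obstacle I anticipate is bookkeeping the weight $\rho_n^{-2}$ correctly through the rescaling: one must verify that the region carrying a definite share of the weighted angular energy is \emph{geometrically} a fixed-size (or blown-up) piece of the collar in the rescaled metric — so that the extracted limit really does see nontrivial energy — rather than the weighted energy being an artifact spread thinly over an exponentially long tube. In other words, the heart of the argument is a \emph{localisation} step: showing $\int_{\Col}\rho^{-2}|u_\th|^2\,d\th\,ds$ large forces $\int|u_\th|^2$ (unweighted) to be at least $\eps_0$ on some sub-cylinder of bounded conformal modulus sitting where $\rho$ is comparable to its minimum, which is where one can rescale and take limits. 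I expect this to use the monotonicity of $\rho_n$ along the collar together with the explicit collar asymptotics from Lemma \ref{lemma:collar}, plus the $\eps$-regularity estimate to rule out energy escaping to the ends.
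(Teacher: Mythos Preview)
Your contradiction/blow-up strategy has a genuine gap, and it is precisely the one you flag at the end: the \emph{localisation} step is not available, and without it the compactness argument does not close. The weighted quantity $I^{(\th)}=\int_{\Col}\rho^{-2}|u_\th|^2\,d\th\,ds$ can diverge even when the unweighted angular energy is uniformly bounded (as it must be, since $E(u_n)\leq E_0$) and there is no local concentration at all. Concretely, imagine the angular energy density $\int_{S^1}|u_\th|^2\,d\th$ sitting at level $\sim\ell_n$ uniformly over the whole collar of length $2X(\ell_n)\sim 2\pi^2/\ell_n$: then the total energy is bounded, nothing concentrates, every blow-up limit is constant --- yet since $\rho^{-2}\sim 4\pi^2/\ell_n^2$ near the centre, one computes $I^{(\th)}\sim C/\ell_n^2\to\infty$. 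Your argument would need to rule out this ``thin spreading'' scenario, but a soft compactness/blow-up limit sees only fixed compact subcylinders and gives no control over an integral whose domain is growing and whose weight is blowing up. In other words, showing that the angular energy is \emph{not} spread thinly over the long collar is the entire content of the lemma; it is not a bookkeeping issue but the heart of the matter.

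The paper's proof is not by contradiction but by a direct quantitative estimate. First, the no-bubble hypothesis is used (via $\eps$-regularity, Lemmata \ref{small_energy}--\ref{basicW22W14} and Corollaries \ref{cor5}--\ref{cor6}) to obtain uniform local $W^{2,2}$/$W^{1,4}$ bounds that are small plus a tension term. These feed into a computation of $\vth''(s)$, where $\vth(s)=\int_{S^1}|u_\th|^2$, controlling the nonlinear cross-terms $|u_\th|^2|u_s|^2$ and $|u_\th|^4$. Because no a priori smallness of energy or tension is assumed, one cannot work with $\vth$ itself; instead one proves a second-order \emph{delay} differential inequality for a smoothed version $\Theta(s_0)=\int\varphi^4(s-s_0)|u_\th|^2$ (Lemma \ref{lemma:diff-inequality}), roughly $\Theta''-\tfrac32\Theta+\tfrac18(\Theta(\cdot-\tfrac12)+\Theta(\cdot+\tfrac12))\geq -C\rho^2\|\tau\|_{L^2}^2$. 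A comparison argument then gives exponential decay $\Theta(s_0)\lesssim e^{-(X-|s_0|)}+\int e^{-|s-s_0|}\rho^2\|\tau\|^2$ (Proposition \ref{prop:angular-energy}). The lemma follows by multiplying by $\rho^{-2}(s_0)$ and integrating, using the key quantitative fact \eqref{rho-monotone} that the exponential decay rate $e^{-|s|}$ beats the growth rate of the weight $\rho^{-2}$ (which by \eqref{rho_deriv} grows at most like $e^{2|s|/\pi}$). That comparison of rates is exactly what a soft blow-up argument cannot deliver.
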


Of course, without the weighting coefficient $\rho^{-2}$ 
(with $\rho$ from Lemma \ref{lemma:collar}) the left-hand side would be the normal angular energy, and would thus be bounded. When additionally the tension can be controlled, we will show that the angular energy decays exponentially along the collar, and this decay dominates the growth of $\rho^{-2}$ towards the centre of the collar.

We will give the proof of Lemma \ref{ang_en_est} towards the end of Section \ref{angensect} once we have developed some preliminary theory.

\subsection{Controlling the concentration of energy}

\newcommand{\Cyl}{{\mathscr{C}}}

In this section we elaborate on the well-known principles that regions of concentrated energy in almost harmonic maps (i.e. maps with small tension field) can be blown up to yield bubbles, and that concentrated energy poses the only obstruction to getting higher order estimates.
(Recall from the introduction that a bubble is a nonconstant harmonic map from $S^2$ to $N$.)
See (for example) Corollary \ref{cor6} for a consequence of these principles.

Given $s\in\R$ and $\Lambda\in (0,\infty)$, we define the cylinder
$\Cyl_\Lambda(s)$ to be $(s-\Lambda,s+\Lambda)\times S^1$. 
By default, this will be equipped with the standard cylindrical metric $ds^2+d\th^2$, in which case we drop references to the metric, for example abbreviating the tension by $\tau(u)$ or simply $\tau$. The metric is only made explicit in the case that we equip the cylinder with 
(part of) a hyperbolic collar metric $g$.

\begin{lemma}
\label{small_energy}
Given a closed Riemannian manifold $N$ not supporting any bubbles, and constants
$\ep_1>0$ and $E_0<\infty$, there exist $\tilde \Lambda\in(1,\infty)$ and $K<\infty$ such that the following holds.
Given any smooth map $u:\Cyl_{\tilde \Lambda}(0)\to N$ with total energy
$E(u;\Cyl_{\tilde\Lambda}(0))\leq E_0$, 
we have that 
$$E(u;B_{r_0}(p))<\ep_1\qquad\text{for all }
p\in \{0\}\times S^1$$
for all $r_0\leq(1+K\|\tau(u)\|_{L^2(\Cyl_{\tilde \La}(0))})^{-1}$.
\end{lemma}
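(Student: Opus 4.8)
The plan is to argue by contradiction using the standard bubbling compactness machinery, exploiting the no-bubbles hypothesis. Suppose the statement fails. Then there exist $\ep_1>0$, $E_0<\infty$, and sequences $\tilde\Lambda_n\to\infty$, maps $u_n:\Cyl_{\tilde\Lambda_n}(0)\to N$ with $E(u_n;\Cyl_{\tilde\Lambda_n}(0))\leq E_0$, points $p_n\in\{0\}\times S^1$, and radii $r_{0,n}\leq (1+n\|\tau(u_n)\|_{L^2(\Cyl_{\tilde\Lambda_n}(0))})^{-1}$ such that $E(u_n;B_{r_{0,n}}(p_n))\geq\ep_1$. In particular, since $r_{0,n}\le 1$, we have energy at least $\ep_1$ concentrating on balls $B_{r_{0,n}}(p_n)$ whose radii must tend to $0$: indeed if $r_{0,n}\not\to 0$ along a subsequence, then $\|\tau(u_n)\|_{L^2}\to 0$ along that subsequence, and one is in the familiar situation of an almost-harmonic map sequence on a fixed-size cylinder carrying a definite amount of concentrated energy, which by the standard rescaling/removable-singularity argument (as in \cite{SU}, \cite{struweCMH}, \cite{DT}) produces a nonconstant harmonic map $S^2\to N$, contradicting the hypothesis. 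Hence $r_{0,n}\to 0$ and, equivalently, $n\|\tau(u_n)\|_{L^2}\to\infty$ is the only remaining case, although in fact we treat both uniformly below.

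Next I would perform the blow-up. After translating so that $p_n$ is (say) the point corresponding to $s=0$ and a fixed angle, rescale by setting $v_n(x):=u_n(p_n+r_{0,n}x)$ on the rescaled domain, which exhausts all of $\R^2$ as $n\to\infty$ because $\tilde\Lambda_n\to\infty$ and $r_{0,n}\to 0$. The energy is conformally invariant in dimension two, so $E(v_n)\leq E_0$ on every fixed ball for $n$ large, and $E(v_n;B_1(0))\geq\ep_1$ by construction. The tension field rescales as $\tau(v_n)(x)=r_{0,n}^2\,\tau(u_n)(p_n+r_{0,n}x)$, so $\|\tau(v_n)\|_{L^2(B_R(0))}=r_{0,n}\|\tau(u_n)\|_{L^2(B_{Rr_{0,n}}(p_n))}\leq r_{0,n}\|\tau(u_n)\|_{L^2(\Cyl_{\tilde\Lambda_n}(0))}\to 0$, since $r_{0,n}\|\tau(u_n)\|_{L^2}\le 1/n$ by our choice of $r_{0,n}$. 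Thus the $v_n$ are a sequence of almost-harmonic maps on larger and larger domains with uniformly bounded energy. By the $\ep$-regularity theory for almost-harmonic maps (again \cite{SU}, \cite{DT}, \cite{struweCMH}), on the subset of $\R^2$ where no more than a threshold amount of energy concentrates we get uniform higher-order bounds, and away from at most finitely many points $a_1,\dots,a_m$ (where at least some fixed quantum of energy accumulates) a subsequence of $v_n$ converges in $C^\infty_{loc}$ to a harmonic map $v_\infty:\R^2\setminus\{a_1,\dots,a_m\}\to N$ of finite energy. By the removable singularity theorem for harmonic maps and the conformal equivalence $\R^2\cup\{\infty\}\cong S^2$, $v_\infty$ extends to a harmonic map $S^2\to N$; and since we have retained at least $\ep_1$ of energy near $B_1(0)$, either $v_\infty$ is nonconstant, or a concentration point $a_j$ lies in $\overline{B_1(0)}$ and a standard second rescaling at $a_j$ yields a nonconstant harmonic sphere. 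Either way we contradict the no-bubbles hypothesis, and the lemma follows.

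The main obstacle is the bookkeeping in the concentration-compactness dichotomy — ensuring that the $\ep_1$ of energy sitting in $B_{r_{0,n}}(p_n)$ is genuinely captured in the limit rather than escaping to a bubble at a different scale, and handling the possibility that several bubbles form. This is, however, entirely standard; the one point requiring a little care is that we must extract a \emph{nonconstant} limiting sphere, which is why one follows the usual prescription of choosing the concentration points and, if necessary, iterating the rescaling at the first point where a definite quantum of energy persists, so that no energy is lost in the necks between scales. Since all of this is classical and the target assumption is exactly what rules out the resulting sphere, the argument closes without needing any new estimate.
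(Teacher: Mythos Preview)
Your proposal is correct and follows essentially the same approach as the paper: argue by contradiction, split into the cases $\|\tau(u_n)\|_{L^2}\to 0$ (equivalently $r_{0,n}\not\to 0$) versus $r_{0,n}\to 0$, and in each case use a rescaling together with the Sacks--Uhlenbeck removable singularity theorem to extract a nonconstant harmonic sphere, contradicting the no-bubbles hypothesis. The only cosmetic differences are that the paper fixes a single point $p$ rather than a sequence $p_n$ (harmless, since $\{0\}\times S^1$ is compact) and phrases the first case as ``pass to a weak limit on the infinite cylinder, which must be constant, then blow up where the $\ep_1$ of energy concentrates,'' whereas you fold this into the general concentration-compactness dichotomy; your remark that both cases can be treated uniformly by the blow-up is fine, though note that when $r_{0,n}\not\to 0$ the rescaled domain remains a cylinder rather than exhausting $\R^2$, so the two-step argument (limit on the cylinder, then bubble) is the cleaner phrasing there.
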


We stress that all quantities in the lemma above are computed with respect to the flat metric $ds^2+d\th^2$, despite the fact that we will often apply it to cylinders on which we have 
a hyperbolic metric.

\begin{proof}
We proceed by contradiction: if the lemma were false, then there would exist $\ep_1>0$, $E_0\in (0,\infty)$ and 
(after rotating the cylinder)
a point $p\in\{0\}\times S^1$, and a sequence of smooth maps $u_i:\Cyl_i(0)\to N$ with $E(u;\Cyl_i(0))\leq E_0$, 
such that 
\begin{equation}
\label{energy_lower_bound}
E(u_i;B_{r_i}(p))\geq\ep_1
\end{equation}
for $r_i=(1+i\|\tau(u_i)\|_{L^2(\Cyl_i(0))})^{-1}\in (0,1]$.
First we consider the case that after passing to a subsequence
we can arrange that $\|\tau(u_i)\|_{L^2(\Cyl_i(0))}\to 0$.
In this case, we can perform a standard bubbling analysis
to the sequence $u_i$ in order to extract a bubble. More precisely, we can pass to a subsequence and extract a local weak $W^{1,2}$-limit $u_\infty:(-\infty,\infty)\times S^1\to N$, which is harmonic, and by the Sacks-Uhlenbeck removable singularity theorem \cite{SU} we can add two points at infinity in the cylinder 
$(-\infty,\infty)\times S^1$ and extend $u_\infty$ to a harmonic map $S^2\mapsto N$.
Since there exist no bubbles by hypothesis, $u_\infty$ must be a constant map. 
But then the bubbling theory, combined with \eqref{energy_lower_bound} tells us that we can blow up the maps $u_i$ about an appropriate sequence of points $p_i\in B_{r_i}(p)$ and extract a nonconstant harmonic limit $\tilde u_\infty:\R^2\to N$ in $W^{2,2}_{loc}(\R^2,N)$, which can be extended (by adding a point at infinity) to a nonconstant harmonic map $S^2\mapsto N$, i.e. a bubble, which by hypothesis cannot exist. We have arrived at a contradiction in the case that the tension decays to zero (for a subsequence).

The remaining case is that $\|\tau(u_i)\|_{L^2(\Cyl_i(0))}$ is bounded below by some positive constant, uniformly in $i$, which forces $r_i\to 0$. In this case, we blow up each map $u_i$ by a factor $r_i$, and end up with a sequence of maps $\tilde u_i$ on fatter and fatter cylinders, so that $E(\tilde u_i,B_1)\geq \ep_1$ for each $i$, and so that 
$$\|\tau(\tilde u_i)\|_{L^2}=r_i
\|\tau(u_i)\|_{L^2(\Cyl_i(0))}
= \frac{1-r_i}{i}\to 0$$
as $i\to\infty$.
A similar bubbling argument to before allows us to extract a bubble, giving a contradiction in this case too.
\end{proof}

We will combine the lemma above with the following standard regularity estimate in which $B_r$ represents the disc of radius $r>0$ in the flat plane.

\begin{lemma}[{cf. \cite[Lemma 3.3]{RT}}]
\label{basicW22W14}
Given a closed target $N$, there exists $\ep_0>0$ and 
$C<\infty$ such that 
for any $r>0$, and smooth map $u:B_r\to N$ with $E(u;B_r)\leq \ep_0$, we have
$$\int_{B_{r/2}}
|\grad^2 u|^2+|\grad u|^4
\leq C\left(\frac{E(u;B_r)}{r^2}+\|\tau(u)\|_{L^2(B_r)}^2\right).$$
\end{lemma}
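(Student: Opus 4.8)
The statement to prove is Lemma~\ref{basicW22W14}, a standard small-energy $\varepsilon$-regularity estimate for almost-harmonic maps on a flat disc $B_r$. Here is how I would approach it.

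\textbf{Reduction to the unit disc.} The estimate is scale-invariant: if $u:B_r\to N$ and we set $v(x):=u(rx)$ on $B_1$, then $E(v;B_1)=E(u;B_r)$, $\int_{B_{1/2}}|\grad^2 v|^2 = r^2\int_{B_{r/2}}|\grad^2 u|^2$, $\int_{B_{1/2}}|\grad v|^4 = \int_{B_{r/2}}|\grad u|^4$, and $\|\tau(v)\|_{L^2(B_1)}^2 = r^2\|\tau(u)\|_{L^2(B_r)}^2$ (the tension field scales like $\grad^2 u$). So the claimed inequality for general $r$ follows from the case $r=1$, and it suffices to prove: there exist $\ep_0>0$, $C<\infty$ so that $E(u;B_1)\le\ep_0$ implies $\int_{B_{1/2}}|\grad^2 u|^2+|\grad u|^4 \le C(E(u;B_1)+\|\tau(u)\|_{L^2(B_1)}^2)$.

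\textbf{The core PDE estimate.} The harmonic map equation with tension reads, in local coordinates on $N$ and writing the target second fundamental form $A$, schematically $\Delta u = -A(u)(\grad u,\grad u) + \tau(u)$, i.e. $|\Delta u|\le C|\grad u|^2 + |\tau(u)|$ pointwise (using that $N$ is a fixed closed manifold, e.g. isometrically embedded in some $\R^n$, so $|A|$ is bounded). The plan is a standard Rivière–Uhlenbeck / Ladyzhenskaya-inequality bootstrap: cover $B_{1/2}$ by discs and on each use that $\grad u$ satisfies an equation of the form $\Delta(\grad u) \sim \grad(|\grad u|^2) + \grad\tau$. Test the equation for $u$ with $\eta^2 \grad u$ type cutoffs to get, via Ladyzhenskaya's inequality $\|f\|_{L^4(B_1)}^2 \le C\|f\|_{L^2(B_1)}\|f\|_{H^1(B_1)}$ applied to $f=|\grad u|$, that $\int_{B_{3/4}}|\grad u|^4 \le C E(u;B_1)\big(\int_{B_1}|\grad^2 u|^2 + |\grad u|^4\big) + C\|\tau\|_{L^2}^2$; choosing $\ep_0$ small so that $C E(u;B_1)\le\tfrac14$ absorbs the $|\grad u|^4$ term on the right. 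Separately, $L^2$ elliptic estimates on $\Delta u = -A(u)(\grad u,\grad u)+\tau$ give $\int_{B_{1/2}}|\grad^2 u|^2 \le C\int_{B_{3/4}}|\Delta u|^2 + C E(u;B_1) \le C\int_{B_{3/4}}|\grad u|^4 + C\|\tau\|_{L^2}^2 + CE(u;B_1)$. Combining the two displays (with the absorption already performed) yields the claim. Alternatively, since this is explicitly cited as ``cf.\ \cite[Lemma 3.3]{RT}'', one may simply invoke that reference, noting the only change is tracking the dependence on $r$, which is handled by the scaling above.

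\textbf{Main obstacle.} The one genuinely delicate point is the first interior estimate — controlling $\int |\grad u|^4$ by $E(u;B_1)$ times higher-order quantities — because naively $|\grad u|^4 \le |\grad u|^2\cdot|\grad u|^2$ only gives an $L^\infty$-type bound one does not have. The fix is the critical Sobolev/Ladyzhenskaya interpolation in two dimensions combined with the smallness of $E(u;B_1)$, which is precisely why the small-energy hypothesis is indispensable; one must be careful that all cutoff-function commutator terms are lower order and that the chain of radii $1 > 3/4 > 1/2$ leaves room for each successive estimate. Keeping the constant $C$ independent of $u$ (depending only on $N$) is then automatic since $N$ is fixed and all inequalities used are universal.
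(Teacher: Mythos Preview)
Your proposal is essentially correct and follows the same route as the paper: invoke the standard $W^{2,2}$ $\varepsilon$-regularity estimate (which is what \cite[Lemma 3.3]{RT} provides) and couple it with the two-dimensional Sobolev/Ladyzhenskaya inequality. One small slip: in your scaling reduction you write $\int_{B_{1/2}}|\grad v|^4 = \int_{B_{r/2}}|\grad u|^4$, but in fact $\int_{B_{1/2}}|\grad v|^4 = r^2\int_{B_{r/2}}|\grad u|^4$; fortunately this is exactly the power of $r$ needed, so the conclusion still comes out right.

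The paper's presentation is slightly more streamlined than your bootstrap. Rather than running an intertwined absorption argument between the $L^4$ and $W^{2,2}$ quantities on nested balls, the paper simply takes the $W^{2,2}$ bound on $B_{r/2}$ as already established (the ``standard'' estimate from \cite{RT}), and then applies Sobolev to $|\grad u|^2$ \emph{on $B_{r/2}$ itself} to get
\[
\int_{B_{r/2}}|\grad u|^4 \le C\,E(u;B_{r/2})\Big[\int_{B_{r/2}}|\grad^2 u|^2 + r^{-2}E(u;B_{r/2})\Big],
\]
bounding the prefactor $E(u;B_{r/2})$ by $\ep_0$. This avoids having uncontrolled full-ball quantities like $\int_{B_1}|\grad^2 u|^2$ appear on the right-hand side (as in your sketch), and no separate absorption step is needed for the $L^4$ part.
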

Note that the estimate for $\|\grad^2 u\|_{L^2}^2$ is the standard one. The estimate for $\|\grad u\|_{L^4}^4$ follows by applying Sobolev to $|\grad u|^2$ to yield
$$\int_{B_{r/2}}|\grad u|^4\leq
C E(u;B_{r/2})\left[
\int_{B_{r/2}}|\grad^2 u|^2+r^{-2}E(u;B_{r/2})
\right],
$$
and bounding the first $E(u;B_{r/2})$ by $\ep_0$.

The combination of Lemma \ref{basicW22W14} and Lemma \ref{small_energy} will yield:

\begin{cor}
\label{cor5}
For any $E_0<\infty$, and closed Riemannian manifold $N$ not supporting any bubbles, there exist $\La\in(2,\infty)$ and $C<\infty$ such that the following holds.
Given any smooth map $u:\Cyl_\La(0)\to N$ with total energy
$E(u;\Cyl_\La(0))\leq E_0$, we have that 
$$\int_{\Cyl_1(0)}|\grad^2 u|^2+|\grad u|^4
\leq C\left(E(u;\Cyl_2(0))+\|\tau(u)\|_{L^2(\Cyl_\La(0))}^2\right).$$
\end{cor}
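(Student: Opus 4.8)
\textbf{Proof plan for Corollary \ref{cor5}.}

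The plan is to obtain the asserted bound on $\Cyl_1(0)$ by covering that cylinder by finitely many small flat discs, applying the interior regularity estimate of Lemma \ref{basicW22W14} on each disc, and using Lemma \ref{small_energy} to guarantee that the energy hypothesis $E(u;B_r)\leq\ep_0$ of Lemma \ref{basicW22W14} is met on each disc with a controlled radius $r$. First I would fix $\ep_0$ and $C$ from Lemma \ref{basicW22W14}, and then apply Lemma \ref{small_energy} with the choice $\ep_1:=\ep_0$ and the given $E_0$; this produces $\tilde\Lambda\in(1,\infty)$ and $K<\infty$. I then set $\La:=\tilde\Lambda+1$ (or any constant slightly larger than $\tilde\Lambda$, in particular $\La>2$), so that for any point $p\in\Cyl_1(0)$ the fat cylinder $\Cyl_{\tilde\Lambda}(\pi_s(p))$, where $\pi_s$ denotes the $s$-coordinate, sits inside $\Cyl_\La(0)$ and hence carries energy at most $E_0$. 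Lemma \ref{small_energy} then tells us that
$$E(u;B_{r_0}(p))<\ep_0\qquad\text{for all }p\in\Cyl_1(0),$$
provided $r_0\leq r_*:=(1+K\|\tau(u)\|_{L^2(\Cyl_\La(0))})^{-1}$, using that $\|\tau(u)\|_{L^2(\Cyl_{\tilde\Lambda}(\pi_s(p)))}\leq\|\tau(u)\|_{L^2(\Cyl_\La(0))}$.

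Next I would cover $\Cyl_1(0)=(-1,1)\times S^1$ by a finite collection of flat discs $B_{r_*/2}(p_m)$, $m=1,\dots,M$, with centres $p_m\in\Cyl_1(0)$, in such a way that the concentric discs $B_{r_*}(p_m)$ still lie inside $\Cyl_2(0)$ (which is possible since $r_*\leq 1$) and have bounded overlap; the number $M$ of discs needed is at most $C/r_*^2$ for a universal $C$, since we are covering a region of bounded area by discs of radius comparable to $r_*$. On each $B_{r_*}(p_m)$ we have $E(u;B_{r_*}(p_m))\leq\ep_0$ by the previous paragraph, so Lemma \ref{basicW22W14} applies and gives
$$\int_{B_{r_*/2}(p_m)}|\grad^2u|^2+|\grad u|^4\leq C\left(\frac{E(u;B_{r_*}(p_m))}{r_*^2}+\|\tau(u)\|_{L^2(B_{r_*}(p_m))}^2\right).$$
Summing over $m=1,\dots,M$ and using the bounded overlap of the $B_{r_*}(p_m)$ (so that $\sum_m E(u;B_{r_*}(p_m))\leq C\,E(u;\Cyl_2(0))$ and likewise $\sum_m\|\tau(u)\|_{L^2(B_{r_*}(p_m))}^2\leq C\|\tau(u)\|_{L^2(\Cyl_2(0))}^2$, both of which are bounded by the corresponding quantities on $\Cyl_\La(0)$), together with $M\leq C/r_*^2$ and the lower bound $r_*^{-2}=(1+K\|\tau(u)\|_{L^2(\Cyl_\La(0))})^2\leq C(1+\|\tau(u)\|_{L^2(\Cyl_\La(0))}^2)$, I obtain
$$\int_{\Cyl_1(0)}|\grad^2u|^2+|\grad u|^4\leq C\left(\frac{E(u;\Cyl_2(0))}{r_*^2}+\|\tau(u)\|_{L^2(\Cyl_\La(0))}^2\right)\leq C\left(E(u;\Cyl_2(0))(1+\|\tau(u)\|_{L^2(\Cyl_\La(0))}^2)+\|\tau(u)\|_{L^2(\Cyl_\La(0))}^2\right).$$
Finally I would absorb the cross term: since $E(u;\Cyl_2(0))\leq E_0$, the right-hand side is bounded by $C(E(u;\Cyl_2(0))+\|\tau(u)\|_{L^2(\Cyl_\La(0))}^2)$ with $C$ now depending on $E_0$ as permitted, which is exactly the claimed estimate.

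The only mild subtlety — and the step I would be most careful about — is the bookkeeping of the covering: one must check that the factor $1/r_*^2$ coming from the \emph{number} of discs combines cleanly with the factor $1/r_*^2$ in Lemma \ref{basicW22W14} without producing a worse power of $(1+\|\tau(u)\|_{L^2})$ than quadratic. This works precisely because the total energy $E(u;\Cyl_2(0))$ that multiplies $1/r_*^2$ is a single bounded quantity ($\leq E_0$), so $E(u;\Cyl_2(0))/r_*^2\leq E_0\,r_*^{-2}\leq C\,E_0\,(1+\|\tau(u)\|_{L^2(\Cyl_\La(0))}^2)$, and the $\|\tau\|_{L^2}^2$ contribution is already linear in the controlled quantity; everything else (choice of $\La$, application of Lemmata \ref{small_energy} and \ref{basicW22W14}, bounded overlap) is routine. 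One should also note that all metric quantities here are flat, consistent with the conventions of Lemmata \ref{small_energy} and \ref{basicW22W14}, so no conformal factors enter the argument.
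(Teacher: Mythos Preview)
Your proposal is correct and follows essentially the same route as the paper: set $\ep_1:=\ep_0$, invoke Lemma \ref{small_energy} to obtain $\tilde\Lambda,K$, take $\La=\tilde\Lambda+1$, cover $\Cyl_1(0)$ by balls of radius $r_*/2$ with bounded overlap of the doubled balls inside $\Cyl_2(0)$, apply Lemma \ref{basicW22W14} on each, sum, and absorb the cross term using $E(u;\Cyl_2(0))\leq E_0$. One small remark: once you use bounded overlap to get $\sum_m E(u;B_{r_*}(p_m))\leq C\,E(u;\Cyl_2(0))$, the bound $M\leq C/r_*^2$ on the number of balls is not actually needed, so there is only a single factor of $r_*^{-2}$ in play and your ``bookkeeping'' worry dissolves.
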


\begin{proof}
For our given target $N$, let $\ep_1$ be the $\ep_0$ from Lemma \ref{basicW22W14} and feed $\ep_1$ into Lemma \ref{small_energy} together with our $E_0$, to obtain in particular 
the constants $\tilde \La$ and $K$.
We set $\La:=\tilde \La+1$. For $u$ as in the corollary, we then define 
$r_0=(1+K\|\tau(u)\|_{L^2(\Cyl_{\La}(0))})^{-1}$ so that we will be able to apply Lemma \ref{small_energy} on cylinders $\Cyl_{\tilde \La}(s)$ for $s\in [-1,1]$.
We now cover $\Cyl_1(0)$ by balls $B_{r_0/2}(p_i)$ of radius $r_0/2$, with  $p_i\in \Cyl_1(0)$; moreover, we can achieve this with no more than $Cr_0^{-2}$ balls, and so that each point in $\Cyl_2(0)$ is covered no more than $C$ times by the balls $B_{r_0}(p_i)$, for universal $C$.
Adding the estimates from Lemma \ref{basicW22W14} for each of these balls, we obtain
\beqas
\int_{\Cyl_1(0)}
|\grad^2 u|^2+|\grad u|^4
&\leq \sum_i\int_{B_{r_0/2}(p_i)}
|\grad^2 u|^2+|\grad u|^4
\\
&\leq
C\sum_i\left(\frac{E(u;B_{r_0}(p_i))}{r_0^2}+\|\tau(u)\|_{L^2(B_{r_0}(p_i))}^2\right)\\
&\leq
C\left(\frac{E(u;\Cyl_2(0))}{r_0^2}+\|\tau(u)\|_{L^2(\Cyl_2(0))}^2\right).
\eeqas
Using the formula above for $r_0$, we deduce
$$
\int_{\Cyl_1(0)}
|\grad^2 u|^2+|\grad u|^4
\leq
CE(u;\Cyl_2(0))(1+K\|\tau(u)\|_{L^2(\Cyl_{\La}(0))})^2
+C\|\tau(u)\|_{L^2(\Cyl_2(0))}^2,
$$
and hence
\beqs
\int_{\Cyl_1(0)}
|\grad^2 u|^2+|\grad u|^4
\leq
C\left(E(u;\Cyl_2(0))+
\|\tau(u)\|_{L^2(\Cyl_{\La}(0))}^2\right)
\eeqs
as desired, since $C$ is allowed to depend on $E_0$.
\end{proof}

Although we will need the corollary above in the given form, we will also be able to simplify its conclusion using the following:

\begin{cor}
\label{cor6}
For any $E_0<\infty$, $\ep_2>0$, and closed Riemannian manifold $N$ not supporting any bubbles,
there exist $\La\in(3,\infty)$ and $C<\infty$ such that the following holds.
Given any smooth map $u:\Cyl_\La(0)\to N$ with total energy
$E(u;\Cyl_\La(0))\leq E_0$, we have that 
$$E(u;\Cyl_2(0))\leq \ep_2+C\|\tau(u)\|_{L^2(\Cyl_{\La}(0))}^2,$$
and 
$$\int_{\Cyl_1(0)}|\grad^2 u|^2+|\grad u|^4
\leq \ep_2+C\|\tau(u)\|_{L^2(\Cyl_\La(0))}^2.$$
\end{cor}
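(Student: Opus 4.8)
\textbf{Proof proposal for Corollary \ref{cor6}.}

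The plan is to deduce this from Corollary \ref{cor5} by an iteration/covering argument that converts the fixed-proportion energy bound coming from that corollary into an arbitrarily small energy bound $\ep_2$, at the cost of enlarging the cylinder. The key observation is that the energy $E(u;\Cyl_2(s))$, as $s$ ranges over a long cylinder, cannot be everywhere large: since the total energy on $\Cyl_\La(0)$ is at most $E_0$, the average of $E(u;\Cyl_2(s))$ over $s$ in a long interval is small once the interval is long enough. So first I would fix a large integer $m$ (to be chosen in terms of $\ep_2$ and $E_0$) and, applying Corollary \ref{cor5} with its cylinder-width constant $\La_0$, work on $\Cyl_{\La}(0)$ with $\La:=\La_0+m$. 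By a pigeonhole argument there is some $s_0\in[-m,m]$ (with $|s_0|\leq m$) such that $E(u;\Cyl_2(s_0))\leq \frac{E_0}{m}$; more carefully, one can partition $[-m+1,m-1]$ into roughly $m$ disjoint subintervals of length $2$ and note that the corresponding cylinders $\Cyl_1$ are disjoint, so at least one has energy $\leq E_0/m$, and hence the concentric $\Cyl_2$ still has energy controlled — actually to make the $\Cyl_2$'s disjoint I would take the subintervals of length $4$ and get about $m/2$ of them, giving $E(u;\Cyl_2(s_0))\leq 2E_0/m$ for some $|s_0|\leq m$.

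Next I would apply Corollary \ref{cor5}, but centred at this good point $s_0$ rather than at $0$: since $\Cyl_{\La_0}(s_0)\subset\Cyl_{\La}(0)$ (as $|s_0|+\La_0\leq m+\La_0=\La$), Corollary \ref{cor5} gives
\[
\int_{\Cyl_1(s_0)}|\grad^2 u|^2+|\grad u|^4\leq C\left(E(u;\Cyl_2(s_0))+\|\tau(u)\|_{L^2(\Cyl_{\La}(0))}^2\right)\leq \frac{2CE_0}{m}+C\|\tau(u)\|_{L^2(\Cyl_{\La}(0))}^2.
\]
This is not yet the claimed estimate, which is centred at $0$, so the remaining work is to propagate the smallness from the window around $s_0$ to the window around $0$ and, in the process, also control the plain energy $E(u;\Cyl_2(0))$. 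For the plain energy: having a single good slice is enough, because $E(u;\Cyl_2(0))=E(u;\Cyl_2(s_0))+\big(E(u;\Cyl_2(0))-E(u;\Cyl_2(s_0))\big)$ is not directly helpful, so instead I would run the pigeonhole argument to produce a good slice and then use a telescoping/connecting estimate: the difference of energies between two nearby windows is bounded by the $L^4$ norm of $\grad u$ over the region between them (by Cauchy–Schwarz, $E$ on an annular region of bounded area is $\leq$ area$^{1/2}\cdot\|\grad u\|_{L^4}^2$), and along a chain of overlapping length-$2$ windows from $s_0$ to $0$ this $L^4$ norm is in turn controlled by Corollary \ref{cor5} applied at each intermediate centre. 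Summing the at most $\sim m$ such contributions gives $E(u;\Cyl_2(0))\leq \frac{CE_0}{m}+Cm\|\tau(u)\|_{L^2(\Cyl_\La(0))}^2$; choosing $m\geq CE_0/\ep_2$ makes the first term $\leq\ep_2$ and absorbs $m$ into the constant $C$ (which is allowed to depend on $E_0$ and $\ep_2$), yielding the first displayed inequality. The second displayed inequality then follows immediately by feeding this improved energy bound back into Corollary \ref{cor5} once more, centred at $0$, with $\La$ enlarged by a further $\La_0$ if needed.

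The main obstacle I anticipate is the bookkeeping in the chaining step: one must be careful that every intermediate application of Corollary \ref{cor5} is on a cylinder contained in $\Cyl_\La(0)$, which forces $\La$ to grow linearly in $m$, and hence linearly in $\ep_2^{-1}$; this is fine since $\La$ is permitted to depend on $\ep_2$ and $E_0$. A cleaner alternative, avoiding chaining altogether, is to observe that Corollary \ref{cor5} already produces the \emph{bound on the derivatives} on $\Cyl_1$ around \emph{any} centre, and in particular controls $\|\grad u\|_{L^4}$ on overlapping windows covering $[-2,2]$; combined with the single good slice from pigeonhole and the elementary inequality $E(u;\Cyl_2(0))\leq E(u;\Cyl_2(s_0))+\big|\!\int_{s_0}^{0}\frac{d}{ds}E(u;\Cyl_2(s))\,ds\big|$ and a bound on this derivative by $\|\grad u\|_{L^2(\partial)}^2$ (trace terms, controlled by $\|\grad u\|_{L^4}$), one gets the same conclusion. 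Either way, once $E(u;\Cyl_2(0))\leq\ep_2+C\|\tau\|^2$ is in hand, the second estimate is a one-line consequence of Corollary \ref{cor5}.
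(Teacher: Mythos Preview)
Your proposal has a genuine gap in the chaining step. The pigeonhole argument correctly produces a window $\Cyl_2(s_0)$ with $E(u;\Cyl_2(s_0))\leq 2E_0/m$, but this window may sit anywhere in $[-m,m]$, and you cannot propagate the smallness back to $\Cyl_2(0)$ in the way you describe. Concretely: your chain from $s_0$ to $0$ invokes Corollary~\ref{cor5} at each intermediate centre $s$ to control $\|\grad u\|_{L^4(\Cyl_1(s))}$, but that bound reads
\[
\int_{\Cyl_1(s)}|\grad u|^4\leq C\big(E(u;\Cyl_2(s))+\|\tau(u)\|_{L^2}^2\big),
\]
and for the intermediate $s$ between $s_0$ and $0$ the term $E(u;\Cyl_2(s))$ is \emph{not} known to be small --- it may be of order $E_0$. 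Hence each step of the chain contributes an error of order $E_0^{1/2}$ (or $E_0$), not $E_0/m$, and the accumulated error grows with $m$ rather than vanishes. In your summary ``$E(u;\Cyl_2(0))\leq \frac{CE_0}{m}+Cm\|\tau\|^2$'' you have silently dropped exactly these energy contributions from the intermediate applications of Corollary~\ref{cor5}. Nothing in a pigeonhole argument prevents the energy from being concentrated precisely at $s=0$.

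The paper's proof avoids any chaining. It goes back to Lemma~\ref{small_energy} directly: that lemma (using the no-bubbles hypothesis) says the energy on \emph{every} ball of radius $r_0=(1+K\|\tau(u)\|_{L^2(\Cyl_\La(0))})^{-1}$ centred in $\Cyl_2(0)$ is below any prescribed $\ep_1$. One covers $\Cyl_2(0)$ by $C_0r_0^{-2}$ such balls and sums; since $r_0^{-2}\leq 2(1+K^2\|\tau\|^2)$, choosing $\ep_1=\ep_2/(2C_0)$ yields $E(u;\Cyl_2(0))\leq\ep_2+C\|\tau\|^2$ immediately. The point is that the no-bubbles hypothesis gives smallness \emph{everywhere} at scale $r_0$, which is strictly stronger than smallness at one good slice. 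Your deduction of the second displayed inequality from the first via Corollary~\ref{cor5} is, however, correct and is exactly what the paper does.
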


\begin{proof}
First, we pick $C_0<\infty$ large enough so that for any $r\in (0,1]$, the cylinder $\Cyl_2(0)$ can be covered by $C_0r^{-2}$ balls in $(-\infty,\infty)\times S^1$ of radii $r$, and with centres in $\Cyl_2(0)$.
We may then define $\ep_1:=\frac{\ep_2}{2C_0}$, and appeal to 
Lemma \ref{small_energy} to obtain $\tilde \La$ and $K$. We fix $\La:=\tilde \La+2$, and consider a map $u:\Cyl_\Lambda(0)\to N$.
By setting $r_0:=(1+K\|\tau(u)\|_{L^2(\Cyl_{\La}(0))})^{-1}$,
we are then able to cover $\Cyl_2(0)$ by $C_0{r_0}^{-2}$ balls $B_{r_0}(p_i)\subset (-\infty,\infty)\times S^1$ with $p_i\in\Cyl_2(0)$, and we may 
apply Lemma \ref{small_energy} for each $i$ to obtain bounds $E(u;B_{r_0}(p_i))<\ep_1$.
Summing these estimates yields
\beqs
\begin{aligned}
E(u;\Cyl_2(0))&\leq 
\sum_i E(u;B_{r_0}(p_i))<C_0{r_0}^{-2}\ep_1\\
&=C_0(1+K\|\tau(u)\|_{L^2(\Cyl_{\La}(0))})^2\frac{\ep_2}{2C_0}\\
&\leq \ep_2+C\|\tau(u)\|_{L^2(\Cyl_{\La}(0))}^2,
\end{aligned}
\eeqs
which is the first part of the corollary. 
By combining what we have proved with Corollary \ref{cor5}, allowing $ \La$ (and $C$) to increase and $\ep_2$ to decrease as necessary, we deduce the second part of the corollary.
\end{proof}

\subsection{Estimates on the angular energy}
\label{angensect}

In this section we prove Lemma \ref{ang_en_est}, telling us that the weighted angular energy is controlled in terms of the tension field.

As usual, we will be implicitly using the collar lemma \ref{lemma:collar} and its notation. In particular given a hyperbolic collar $(\Col(\ell),g)$, and writing $\Col=(-X(\ell),X(\ell))\times S^1$, we let 
$X_\de=X_\de(\ell)$ be the number given in \eqref{eq:Xde} for which the $\de\thin$ part of the collar is described (in collar coordinates) by $(-X_\de,X_\de)\times S^1$.

The main ingredient is the following result, which forces the angular energy on unit-length chunks of our long cylinder to decay exponentially as we move in from the ends of the cylinder, at least when the tension field is suitably small.

\begin{prop}\label{prop:angular-energy}
Let $N$ be a closed target that supports no bubbles. Then given any $E_0<\infty$ there exist numbers $C<\infty$, $\La<\infty$
and $\de\in(0,\arsinh(1))$ 
such that for any $\ell\in (0,2\arsinh(1))$, 
any
map $u$ from the hyperbolic collar $(\Col(\ell),g)$ to $N$ with total energy $E(u;\Col(\ell))\leq E_0$, and
any
$s_0\in (-\Xd(\ell),\Xd(\ell))$ we have
$\Cyl_\La(s_0)\subset \Col(\ell)$ and 
$$\int_{s_0-\frac12}^{s_0+\frac12}\int_{S^1}\abs{u_\theta}^2 d\theta ds\leq C\cdot e^{-(X(\ell)-\abs{s_0})} 
+C\cdot \int_{-X_\de(\ell)}^{X_\de(\ell)}e^{-\abs{s-s_0}}\rho^2(s)\cdot \norm{\tau_g(u)}_{L^2(\Cyl_\La(s),g)}^2 ds.$$
\end{prop}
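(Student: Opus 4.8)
The plan is to derive a differential inequality for the angular energy along the cylinder and then integrate it. Write $\omega(s):=\int_{S^1}\abs{u_\theta}^2(s,\theta)\,d\theta$ for the (unweighted) angular energy density on the circle at parameter $s$, computed with respect to the flat cylindrical metric. The collar metric is $g=\rho^2(ds^2+d\theta^2)$ with $\rho=\rho(s)$ depending only on $s$ (Lemma \ref{lemma:collar}), so the tension field $\tau_g(u)=\rho^{-2}(u_{ss}+u_{\theta\theta})$, i.e. $u_{ss}+u_{\theta\theta}=\rho^2\tau_g(u)$. First I would compute $\omega''(s)$ by differentiating under the integral sign twice in $s$ and using this equation: this produces a main term $2\int_{S^1}\langle u_{s\theta},u_{s\theta}\rangle + 2\int_{S^1}\langle u_\theta, u_{ss\theta}\rangle$, and after an integration by parts in $\theta$ on the second term one gets, schematically,
$$\omega''(s)=2\int_{S^1}\abs{u_{s\theta}}^2+2\int_{S^1}\abs{u_{\theta\theta}}^2 - 2\int_{S^1}\langle u_{\theta\theta},(\rho^2\tau_g(u))_\theta\rangle\cdot(\ldots),$$
the point being that the first two terms dominate $4\int_{S^1}\abs{u_\theta}^2=4\omega(s)$ by the Poincaré/Wirtinger inequality on $S^1$ \emph{provided} $u(s,\cdot)$ has small oscillation, so that the zero-frequency mode of $u_\theta$ (which is zero) does not spoil the spectral gap — actually $u_\theta$ already has mean zero on $S^1$, so $\int\abs{u_{\theta\theta}}^2\geq\int\abs{u_\theta}^2$ unconditionally. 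Thus, up to terms involving the tension and up to lower-order terms coming from the nonlinearity of the target (Christoffel symbols, controlled by $\abs{\grad u}^2$ pointwise), one obtains $\omega''(s)\geq 2\omega(s) - (\text{tension terms}) - (\text{small-energy terms})$.

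The small-energy terms are where the no-bubbles hypothesis enters: by Corollary \ref{cor6}, on any sub-cylinder $\Cyl_\La(s)\subset\Col(\ell)$ of our long collar, we can make $\int_{\Cyl_1(s)}\abs{\grad^2 u}^2+\abs{\grad u}^4$ smaller than any prescribed $\ep_2$ at the cost of a controlled multiple of $\norm{\tau_g(u)}_{L^2(\Cyl_\La(s))}^2$; this is exactly what is needed to absorb the curvature/nonlinearity contributions to $\omega''$ into the good term $2\omega(s)$, leaving a clean differential inequality
$$\omega''(s)\geq \omega(s) - C\rho^2(s)\,\norm{\tau_g(u)}_{L^2(\Cyl_\La(s),g)}^2$$
valid for $s$ in the relevant range, after first choosing $\La$ and then $\de$ (the latter to ensure $\Cyl_\La(s_0)\subset\Col(\ell)$ whenever $s_0\in(-\Xd,\Xd)$, using that $X(\ell)-\Xd(\ell)\to\infty$ as $\de\to 0$, cf. Lemma \ref{lemma:collar} and \eqref{eq:Xde}). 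One must be slightly careful that the tension enters weighted by $\rho^2$ because a factor $\rho^2$ appears when converting $\norm{\tau_g(u)}_{L^2(\Cyl,g)}^2=\int\abs{\tau_g(u)}^2\rho^2$ versus the flat integrals, and because $u_{ss}+u_{\theta\theta}=\rho^2\tau_g(u)$ carries a $\rho^2$.

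Once the differential inequality $\omega''\geq\omega - F$ is established on $(-X(\ell),X(\ell))$ with $F(s):=C\rho^2(s)\norm{\tau_g(u)}_{L^2(\Cyl_\La(s),g)}^2$, I would finish by a comparison/Green's-function argument for the operator $-\frac{d^2}{ds^2}+1$ on the interval: the solution is bounded by the boundary contribution (which decays like $e^{-(X(\ell)-\abs{s_0})}$ since the total energy, hence $\omega$, is bounded by $E_0$ near the ends after averaging over a unit interval) plus the convolution of $F$ with the one-dimensional kernel $\tfrac12 e^{-\abs{s-s_0}}$. Integrating over $s_0\in[s_0-\tfrac12,s_0+\tfrac12]$ turns the pointwise bound on $\omega$ into the stated bound on $\int_{s_0-\frac12}^{s_0+\frac12}\int_{S^1}\abs{u_\theta}^2$; the restriction of the convolution integral to $(-\Xd,\Xd)$ rather than all of $(-X,X)$ is absorbed into the exponentially decaying boundary term. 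The main obstacle, I expect, is making the derivation of the differential inequality fully rigorous: carefully tracking the target's curvature terms in $\omega''$, ensuring all second-derivative quantities that appear are the ones controlled by Corollary \ref{cor6} (which is why the statement insists $\Cyl_\La(s_0)\subset\Col(\ell)$, giving room to apply that corollary on translates $\Cyl_\La(s)$), and verifying that the Wirtinger gap is genuinely available — i.e. that no constant-in-$\theta$ mode leaks into the terms being bounded below. The subsequent ODE comparison is routine.
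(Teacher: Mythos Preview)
Your overall strategy---derive a second-order differential inequality for the angular energy and then apply a comparison/Green's-function argument for the operator $-\frac{d^2}{ds^2}+1$---matches the paper's. The Wirtinger step is fine (since $u_\theta$ has mean zero on $S^1$), and the final ODE comparison is indeed routine. However, there is a genuine gap in the step where you claim to absorb the nonlinear curvature terms using Corollary~\ref{cor6}.

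The problem is this: when you compute $\omega''(s)$ for the pointwise angular energy $\omega(s)=\int_{\{s\}\times S^1}|u_\theta|^2$, the curvature of the target produces terms like $\int_{\{s\}\times S^1}|u_\theta|^2|u_s|^2$ and $\int_{\{s\}\times S^1}|u_\theta|^4$ on a \emph{single circle}. Corollary~\ref{cor6}, however, only makes $\int_{\Cyl_1(s)}|\nabla u|^4$ small over a \emph{unit-length cylinder}; it gives no pointwise-in-$s$ smallness of $\int_{\{s\}\times S^1}|\nabla u|^4$. Since no a~priori smallness of energy or tension is assumed, these circle integrals cannot be absorbed uniformly in $s$, and the differential inequality $\omega''\geq\omega-F$ does not follow. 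The paper flags exactly this obstruction and resolves it by replacing $\omega$ with a smoothed version
\[
\Theta(s_0):=\int\varphi^4(s-s_0)\,|u_\theta|^2\,d\theta\,ds,
\]
so that $\Theta''(s_0)$ involves integrals over $\Cyl_1(s_0)$, to which Corollary~\ref{cor6} applies directly. The price is that the resulting inequality is a \emph{delay} differential inequality (it involves $\Theta(s_0\pm\tfrac12)$ as well as $\Theta(s_0)$; see Lemma~\ref{lemma:diff-inequality}), and the comparison argument must be adapted accordingly. Your plan would work verbatim if the energy on each circle were already known to be small, but that is not available here; the smoothing is the missing idea.
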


In the situation that $u$ is a map with \emph{small} tension and small energy, there is some history of results that show exponential decay of energy along cylinders, starting with Hadamard's three circle theorem, and including \cite{QingTian, LinWang, toppingAnnals}. 
The key to such results is typically to derive a second order differential inequality for the angular energy over circles $\{s\}\times S^1$ and then apply the maximum principle.
Our present situation is more complicated because we don't make any \emph{a priori} assumptions of smallness of energy or tension, which prevents us from deriving angular energy estimates on such circles.
Instead, we derive a second order `delay' differential inequality for an angular energy averaged over short lengths of cylinder, defined by:

$$\Theta(s_0):=\int_{-X(\ell)}^{X(\ell)}\int_{S^1}\varphi^4(s-s_0)\abs{u_\theta(s,\theta)}^2 d\theta ds,$$ 
where $\varphi\in C^\infty_0((-1,1),[0,1])$ 
is a cut-off function
with $\varphi\equiv 1$ on $[-\frac12,\frac12]$,
and $|s_0|\leq X(\ell)-1$.

\begin{lemma}\label{lemma:diff-inequality}
Let $N$ be a closed target that supports no bubbles. Then given any $E_0<\infty$ there exist numbers $\de\in(0,\arsinh(1))$, $C_1\in[0,\infty)$ and 
$\La<\infty$
such that for any $\ell\in (0,2\arsinh(1))$,
any map $u:(\Col(\ell),g)\to N$ with total energy 
$E(u;\Col(\ell))\leq E_0$, and any $s\in (-\Xd(\ell),\Xd(\ell))$,
we have $\Cyl_\La(s)\subset \Col(\ell)$, and 
the differential inequality
\beq\label{est: diff-inequality}
\Theta''(s)-\frac32\Theta(s)+\frac18\bigg[\Theta(s-\frac12)+\Theta(s+\frac12)\bigg]\geq -C_1\rho^2(s)\norm{\tau_g(u)}_{L^2(\Cyl_\La(s),g)}^2
\eeq
is satisfied for the angular energy function $\Th$ associated with $u$.
\end{lemma}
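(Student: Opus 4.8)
The plan is to derive the delay differential inequality \eqref{est: diff-inequality} by differentiating the averaged angular energy $\Theta$ twice in $s$ and using the equation for $u$. First I would compute $\Theta'(s_0)$ and $\Theta''(s_0)$ by differentiating under the integral sign; because the $s_0$-dependence enters only through the cut-off $\varphi^4(s-s_0)$, integration by parts in $s$ moves the $s_0$-derivatives onto $s$-derivatives of $\varphi^4(s-s_0)$ and then, after further integration by parts, onto derivatives of $u_\theta$, or alternatively one exchanges $\partial_{s_0}$ for $-\partial_s$ hitting the test function. The upshot should be an identity expressing $\Theta''(s_0)$ as an integral of $\varphi^4$ (and its derivatives) against expressions built from $u_{s\theta}$, $u_\theta$ and (via the harmonic map equation $\Delta u = \tau$, in flat cylindrical coordinates $u_{ss}+u_{\theta\theta}=-A(u)(du,du)+\rho^2\tau_g(u)$ on the collar, or more simply $\Delta_{\text{flat}}u=\rho^2\tau_g(u)+\ldots$) the tension term and a nonlinear term $A(u)(du,du)$ which is cubic in $du$.

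Next I would handle the angular integration. Writing $u_\theta$ as a function on each circle, Wirtinger/Poincaré on $S^1$ gives $\int_{S^1}|u_\theta|^2\le \int_{S^1}|u_{\theta\theta}|^2$, and after integrating the identity for $\Theta''$ by parts in $\theta$ one should land on a ``good'' term $\int\varphi^4|u_{s\theta}|^2+\int\varphi^4|u_{\theta\theta}|^2$ (with the correct positive sign) which can be used to absorb errors, versus ``bad'' terms where $\varphi^4$ has been differentiated — and here $(\varphi^4)''$ is supported where $\varphi<1$, hence controlled by $\Theta(s_0-\tfrac12)+\Theta(s_0+\tfrac12)$ together with $\Theta(s_0)$ after a Young-type splitting; choosing the numerical constants in $\varphi$ carefully is what produces the precise coefficients $-\tfrac32$ and $+\tfrac18$ in \eqref{est: diff-inequality}. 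The nonlinear term $\int \varphi^4 \langle u_\theta, A(u)(du,du)\rangle$ is cubic and must be dominated by $\varepsilon\int\varphi^4(|u_{s\theta}|^2+|u_{\theta\theta}|^2)$ plus lower order, which requires smallness of the gradient in $L^\infty$ or $L^4$ on unit cylinders — this is exactly where Corollary \ref{cor6} enters: given the no-bubbles hypothesis and the energy bound $E_0$, we may fix $\varepsilon_2$ small, obtain $\La$, and conclude $\int_{\Cyl_1(s)}|\nabla u|^4\le \varepsilon_2+C\|\tau_g(u)\|_{L^2(\Cyl_\La(s),g)}^2$, so the cubic term is controlled by $\varepsilon_2$ (absorbed into the good term and the coefficient of $\Theta$) plus a tension contribution. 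The tension term itself, after changing from flat to hyperbolic norms, contributes the right-hand side $C_1\rho^2(s)\|\tau_g(u)\|_{L^2(\Cyl_\La(s),g)}^2$, using that $\rho$ is roughly constant on $\Cyl_\La(s)$ when $\La$ is a fixed number and we stay in the $\delta$-thin part (choosing $\delta$ small fixes $\La\le X_\delta$, guaranteeing $\Cyl_\La(s)\subset\Col(\ell)$).

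The remaining technical points are bookkeeping: translating between the flat cylinder metric $ds^2+d\theta^2$ in which $\Theta$, $\varphi$ and Corollary \ref{cor6} are phrased, and the hyperbolic collar metric $g=\rho^2(ds^2+d\theta^2)$ in which $\tau_g(u)$ lives, so that $\|\tau_g(u)\|_{L^2(\Cyl_\La(s),g)}^2=\int_{\Cyl_\La(s)}|\tau_g(u)|^2\rho^2\,ds\,d\theta$ and the factor $\rho^2(s)$ on the right-hand side appears naturally (using $\rho$ comparable across $\Cyl_\La(s)$); and verifying $\Cyl_\La(s)\subset\Col(\ell)$ for $s$ in the $\delta$-thin part, which is immediate once $\delta$ is chosen small enough relative to the fixed $\La$ via the collar lemma \ref{lemma:collar} and \eqref{eq:Xde}.

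The main obstacle I expect is getting the constants exactly right: one needs the ``good'' term coming from $\int\varphi^4(|u_{s\theta}|^2+|u_{\theta\theta}|^2)$ to both dominate the cubic nonlinearity (after using Corollary \ref{cor6} smallness) \emph{and} combine with the Poincaré inequality on $S^1$ and the $(\varphi^4)''$-terms to yield precisely $-\tfrac32\Theta(s)+\tfrac18[\Theta(s-\tfrac12)+\Theta(s+\tfrac12)]$ on the left — in particular the numerical coefficient $\tfrac18$ is dictated by how $(\varphi^4)''$ can be bounded pointwise by a combination of $\varphi^4$ evaluated at shifted points, so the cut-off $\varphi$ must be engineered (e.g. so that $|(\varphi^4)''|\le \tfrac14(\varphi^4(\cdot-\tfrac12)+\varphi^4(\cdot+\tfrac12))+C\varphi^4$ in an appropriate sense), and threading all the $\varepsilon$'s and Young's-inequality weights through without breaking these inequalities is the delicate part. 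Everything else — differentiation under the integral, integration by parts, invoking Corollary \ref{cor6} — is routine.
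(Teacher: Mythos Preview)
Your overall skeleton is right --- differentiate $\Theta$, use the equation for $u$, extract a good term $\int\tilde\varphi^4(|u_{s\theta}|^2+|u_{\theta\theta}|^2)$, apply Wirtinger, control the nonlinearity via Corollary~\ref{cor6}, and fix $\delta$ at the end --- but you have misidentified the mechanism that produces the delay terms and the coefficients.

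There are \emph{no} $(\varphi^4)''$ terms. After the change of variables $\tilde s = s - s_0$ one has $\Theta(s_0)=\int\varphi^4(\tilde s)\vartheta(\tilde s + s_0)\,d\tilde s$ with $\vartheta(s)=\int_{S^1}|u_\theta|^2$, so both $s_0$-derivatives land on $\vartheta$ and $\Theta''(s_0)=\int\varphi^4(s-s_0)\vartheta''(s)\,ds$ exactly; $\varphi$ is never differentiated. Your proposed engineering of $\varphi$ to satisfy a pointwise bound on $(\varphi^4)''$ is therefore irrelevant (and would not be feasible with coefficient $\tfrac14$ anyway).

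The delay terms arise elsewhere. After using the equation and integrating by parts in $\theta$, the dangerous terms are $\int\tilde\varphi^4|u_\theta|^4$ and $\int\tilde\varphi^4|u_\theta|^2|u_s|^2$. Estimating $\int\tilde\varphi^4|u_\theta|^4$ by Sobolev on the cylinder produces a factor $\int_{\spt\tilde\varphi}|u_\theta|^2$ (with no weight), and the point is the elementary pointwise inequality
\[
1 \le \tilde\varphi^4(s-\tfrac12)+\tilde\varphi^4(s+\tfrac12)\qquad\text{for } s\in\spt\tilde\varphi=[s_0-1,s_0+1],
\]
which holds simply because $\varphi\equiv 1$ on $[-\tfrac12,\tfrac12]$; this yields $\int_{\spt\tilde\varphi}|u_\theta|^2\le \Theta(s_0-\tfrac12)+\Theta(s_0+\tfrac12)$. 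The remaining factor is made small by Corollary~\ref{cor6}, and choosing $\varepsilon_2$ small enough is what gives the coefficient $\tfrac18$. The $\tfrac32$ comes from applying Wirtinger to $(2-\tfrac12)\int\tilde\varphi^4|u_{\theta\theta}|^2$ (the $\tfrac12$ having been spent absorbing errors). No special properties of $\varphi$ beyond $\varphi\equiv 1$ on $[-\tfrac12,\tfrac12]$ and $\spt\varphi\subset(-1,1)$ are used.
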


Accepting this lemma for the moment, we can give the:

\begin{proof}[Proof of Proposition \ref{prop:angular-energy}]
For the given target $N$ and energy upper bound $E_0$, let $\de>0$, $C_1$ and $\Lambda$ be as in Lemma 
\ref{lemma:diff-inequality}. We fix arbitrary $\ell\in (0,2\arsinh(1))$, although if $\ell\geq 2\de$ we have
$(-X_\de,X_\de)=\emptyset$, so there is nothing to prove.
(Recall that we sometimes abbreviate $X_\de:=X_\de(\ell)$.)

We then consider a map 
$u:(\Col(\ell),g)\to N$ with total energy $E(u;\Col(\ell))\leq E_0$.
Let 
$$L(f)=f''(s)-\frac32 f(s)+\frac18\big[f(s+\frac12)+f(s-\frac12)\big]$$
be the operator describing the left-hand side of the differential inequality \eqref{est: diff-inequality} satisfied by the angular energy function $\Theta$.
We observe that while $L$ is not a classical differential operator, 
the usual comparison principle for ODE still applies. 
More precisely, let $f,\tilde f\in C^2([-\Xd,\Xd])$ be any two functions 
such that 
$$L(f)\leq L(\tilde f) \text{ on } (-\Xd+\frac12,\Xd-\frac12) \text{ with } f\geq \tilde f \text{ on } [-\Xd,-\Xd+\frac12]\cup [\Xd-\frac12, \Xd].$$
Then $f\geq \tilde f$ on all of  $[-\Xd,\Xd]$; indeed, if $f-\tilde f$ were to achieve a negative minimum at some $s_0\in (-\Xd+\frac12,\Xd-\frac12)$,
then
$$(f-\tilde f)''(s_0)\leq \frac32 \min (f-\tilde f)-\frac18\big[(f-\tilde f)(s_0+\frac12)+(f-\tilde f)(s_0-\frac12)\big]
\leq (\frac32-\frac14)\min (f-\tilde f)<0$$
would lead to a contradiction. 

In order to bound the angular energy function $\Th$, we compare it with solutions of a slightly modified equation, namely 
of
\beq \label{eq:ODE} 
f''-f=-C_1\cdot G \qquad\text{ on } [-\Xd,\Xd], \eeq
where $G(s):= \rho^2(s)\norm{\tau_g(u)}_{L^2(\Cyl_\La(s),g)}^2$ 
is the function on the right-hand side of \eqref{est: diff-inequality}, modulo the constant $-C_1$. 

Recall that any solution of \eqref{eq:ODE}
can be described by 

\beqs
f_{A,B}(s):=A\cdot e^{s-\Xd}+B\cdot e^{-s-\Xd}+\frac{C_1}2\int_{-\Xd}^{\Xd}e^{-\abs{s-q}}G(q)dq,\quad A,B\in\R.
\eeqs
Because $C_1\geq 0$, if
$A,B>0$ then the functions $f_{A,B}$ are positive and 
$$\frac18\left(f_{A,B}(s+\frac12)+f_{A,B}(s-\frac12)\right)\leq \frac{e^{\frac12}}4\cdot f_{A,B}(s)<\frac12f_{A,B}(s)$$
for any $s\in[-\Xd+\frac12,\Xd-\frac12]$. 
Consequently, $L(f_{A,B})\leq -C_1 G(s)\leq L(\Theta)$. 
Since we always have $\Theta(s)\leq 2E_0$ for any $s$, we may apply the comparison theorem with $A,B=2E_0e$, say, to conclude that
$$\Theta(s)\leq f_{A,B}(s)=2E_0\cdot (e^{s-\Xd+1}+e^{-s-\Xd+1})+\frac{C_1}2\int_{-\Xd}^{\Xd}e^{-\abs{s-q}}G(q)dq.$$
Since $X(\ell)-\Xd(\ell)\leq \frac{C}{\delta}$ is bounded independently of $\ell$, by Proposition \ref{X_Xdelta},
we thus obtain the claim of Proposition \ref{prop:angular-energy}.
\end{proof}

The proof above relied on Lemma \ref{lemma:diff-inequality}, claiming a second order differential inequality for the locally smoothed angular energy $\Th(s)$.

\begin{proof}[Proof of Lemma \ref{lemma:diff-inequality}]
Defining, for $s\in (-X(\ell),X(\ell))$,
$$\vth(s):=\int_{\{s\}\times S^1}|u_\th|^2,$$
where we drop the volume element $d\th$ for brevity,
we may compute
$$\vth'(s)=2\int_{\{s\}\times S^1}u_\th \cdot u_{s\th},$$
and
\beqs
\begin{aligned}
\vth''(s)&=2\int_{\{s\}\times S^1}|u_{s\th}|^2+u_\th \cdot u_{ss\th}\\
&= 2\int_{\{s\}\times S^1}|u_{s\th}|^2-u_{\th\th} \cdot u_{ss}.
\end{aligned}
\eeqs
Meanwhile, the tension $\tau$ of $u$ with respect to the flat cylindrical metric is given in terms of the second fundamental form $A(\cdot)$ of the target $(N,G)\embed \R^{N_0}$ by
$$\tau=u_{ss}+u_{\th\th}+A(u)(u_s,u_s)+A(u)(u_\th,u_\th)$$
and thus
$$\vth''(s)=
2\int_{\{s\}\times S^1}|u_{s\th}|^2+|u_{\th\th}|^2
-u_{\th\th} \cdot \tau
+u_{\th\th} \cdot \left[A(u)(u_s,u_s)+A(u)(u_\th,u_\th)\right].$$

We develop the penultimate term using integration by parts:
$$\left|\int u_{\th\th} \cdot [A(u)(u_s,u_s)]\right|
=
\left|\int u_{\th} \cdot [A(u)(u_s,u_s)]_\th\right|
\leq
C_N\int |u_\th|^2|u_s|^2+|u_{s\th}||u_s||u_\th|$$
and then apply Young's inequality to estimate
\beq
\label{vth_deriv}
\vth''(s)\geq 
(2-\frac14)\int_{\{s\}\times S^1}|u_{s\th}|^2+|u_{\th\th}|^2
-C\int_{\{s\}\times S^1}|\tau|^2
-C_{N}\int_{\{s\}\times S^1}|u_\th|^2|u_s|^2+|u_\th|^4.
\eeq

Let us now assume that $s_0$ is not too close to the ends of the cylinder, more precisely that $|s_0|\leq X(\ell)-1$, so 
that we can change variables 
($\tilde s=s-s_0$)
and write
$$\Theta(s_0):=\int\varphi^4(\tilde s)
\vth(\tilde s+s_0)d\tilde s.$$ 
Differentiating twice and using \eqref{vth_deriv}, we find that
\beqas
\Theta''(s_0)&=\int\varphi^4(\tilde s)
\vth''(\tilde s+s_0)d\tilde s
=\int\varphi^4(s-s_0)
\vth''(s)ds\\
&\geq
\int\varphi^4(s-s_0)
\left(
(2-\tfrac14)
(|u_{s\th}|^2+|u_{\th\th}|^2)
-C
|\tau|^2
-C_{N}
\left(|u_\th|^2|u_s|^2+|u_\th|^4\right)
\right)d\th ds.
\eeqas
Considering now $s_0$ to be fixed, we set $\tilde\vph(s):=\vph(s-s_0)$. Allowing $C$ to depend on $N$, we can then write
\beqa
\label{Th_deriv}
\Theta''(s_0)
&\geq
\int\tilde\varphi^4
\left(
(2-\frac14)(|u_{s\th}|^2+|u_{\th\th}|^2)
-C\left(|\tau|^2
+|u_\th|^2|u_s|^2+|u_\th|^4\right)
\right),
\eeqa
where we now start dropping the volume element $dsd\th$.

In order to control the final term, we
use the Sobolev inequality over 
the entire collar to find that
\beqa
\int\tilde\vph^4|u_\th|^4 &=\|\tilde\vph^2|u_\th|^2\|_{L^2}^2\\
&\leq C\|\tilde\vph^2|u_\th|^2\|_{W^{1,1}}^2\\
&\leq C\left[
\left(\int \tilde\vph^2|u_\th|^2\right)^2+
\left(\int\tilde\vph^2 (|u_{s\th}|+|u_{\th\th}|)|u_\th|\right)^2+
\left(\int\tilde\vph |u_\th|^2\right)^2
\right]\\
&\leq C\left[
\left(\int_{spt\tilde\vph}|u_\th|^2\right)^2+
\left(\int_{spt\tilde\vph}|u_\th|^2\right)
\left(\int \tilde\vph^4(|u_{s\th}|^2+|u_{\th\th}|^2)\right)
\right].\label{mother_split_est}
\eeqa
In order to control the right-hand side in terms of $\Th$, we require some extra powers of $\tilde \vph$ in the integrands. We achieve this by observing that if $s$ is within the support of $\tilde\vph$, then we have $s\in [s_0-1,s_0+1]$, and hence
\beqs
1\leq \tilde\vph^4(s-\half)+\tilde\vph^4(s+\half).
\eeqs
Therefore, assuming now that $|s_0|\leq X(\ell)-3/2$
(to prevent the integrals falling off the end of $\Col(\ell)$) we have
\beq \label{split-theta}
\int_{spt\tilde\vph}|u_\th|^2 \leq \Th(s_0-\half)+\Th(s_0+\half).
\eeq
Applied to only one factor of $\int_{spt\tilde\vph}|u_\th|^2$ in \eqref{mother_split_est}, one consequence
of this estimate is
\beqs
\int\tilde\vph^4|u_\th|^4 \leq
C
\left(\Th(s_0-\half)+\Th(s_0+\half)\right)
\left[
E(u;\Cyl_1(s_0))
+
\int_{\Cyl_1(s_0)}(|u_{s\th}|^2+|u_{\th\th}|^2).
\right]
\eeqs
\newcommand{\Lep}{{\Lambda_{\eps_2}}}
Hence by Corollary \ref{cor6}, for arbitrary $\ep_2$ (to be picked later) there exists $\Lep<\infty$ such that 
if $|s_0|\leq X(\ell)-\Lep$, then
\newcommand{\Cep}{C_{\eps_2}}
\beq
\label{last_term_est}
\int\tilde\vph^4|u_\th|^4 \leq
C\ep_2
\left(\Th(s_0-\half)+\Th(s_0+\half)\right)
+
\Cep\|\tau(u)\|_{L^2(\Cyl_\Lep(s_0))}^2
\eeq
where $C$ is not allowed to depend on $\ep_2$, but $\Cep$ is.
The estimate above will be used to control the final term in \eqref{Th_deriv}. We now wish to control the penultimate term, and thus estimate, using again \eqref{mother_split_est},
but now applying \eqref{split-theta} to all factors of $\int_{spt\tilde\vph}|u_\th|^2 $
\beqa
\label{geyser}
&\int\tilde\vph^4|u_\th|^2|u_s|^2
\leq 
\left(
\int\tilde\vph^4|u_\th|^4
\right)^\half
\left(
\int\tilde\vph^4|u_s|^4
\right)^\half\\
&\hspace{1.5cm}\leq
C
\left(\Th(s_0-\half)+\Th(s_0+\half)\right)
\left(
\int\tilde\vph^4|u_s|^4
\right)^\half\\
&\hspace{1.5cm}\quad 
+
C
\left(\Th(s_0-\half)+\Th(s_0+\half)\right)^\half
\left(
\int\tilde\vph^4|u_s|^4
\right)^\half
\left(\int \tilde\vph^4(|u_{s\th}|^2+|u_{\th\th}|^2)\right)^\half\\
&\hspace{1.5cm}\leq
C
\left(\Th(s_0-\half)+\Th(s_0+\half)\right)
\left[
\left(
\int\tilde\vph^4|u_s|^4
\right)^\half
+
\left(
\int\tilde\vph^4|u_s|^4
\right)
\right]\\
&\hspace{1.5cm}\quad
+\frac{1}{4}\int \tilde\vph^4(|u_{s\th}|^2+|u_{\th\th}|^2).
\eeqa
To control the part in square brackets, we use Young's inequality to estimate
$$
\left(
\int\tilde\vph^4|u_s|^4
\right)^\half
\leq
\sqrt{\ep_2}+\frac{1}{\sqrt{\ep_2}}
\int\tilde\vph^4|u_s|^4
$$
(with the same value of $\ep_2$ as above, still to be chosen)
and then use Corollary \ref{cor6} to control
$$
\int\tilde\vph^4|u_s|^4\leq \int_{\Cyl_1(s_0)}|u_s|^4
\leq 
\ep_2+\Cep\|\tau(u)\|_{L^2(\Cyl_{\Lep}(s_0))}^2,$$
$\Lep$ as before,
and we again require $|s_0|\leq X(\ell)-\Lep$ so that
$\Cyl_\Lep(s_0)\subset \Col(\ell)$.
Therefore \eqref{geyser} becomes
\beqa
\label{penultimate_term_est}
\int\tilde\vph^4|u_\th|^2|u_s|^2
&\leq 
C
\left(\Th(s_0-\half)+\Th(s_0+\half)\right)
\left[
\sqrt{\ep_2}
+
\Cep
\|\tau(u)\|_{L^2(\Cyl_\Lep(s_0))}^2
\right]\\
&\quad
+\frac{1}{4}\int \tilde\vph^4(|u_{s\th}|^2+|u_{\th\th}|^2)\\
&\leq 
C\sqrt{\ep_2}
\left(\Th(s_0-\half)+\Th(s_0+\half)\right)
+\Cep
\|\tau(u)\|_{L^2(\Cyl_\Lep(s_0))}^2
\\
&\quad
+\frac{1}{4}\int \tilde\vph^4(|u_{s\th}|^2+|u_{\th\th}|^2)
\eeqa
where $\Cep$ is allowed to depend on both $E_0$ and $\ep_2$ but again, $C$ does not depend on $\ep_2$.
Combining \eqref{Th_deriv} with our estimates
\eqref{last_term_est} and \eqref{penultimate_term_est},
and choosing $\ep_2>0$ sufficiently small, gives
us 
\beqas
\lefteqn{\Theta''(s_0)
+\frac{1}{8}\left(\Th(s_0-\half)+\Th(s_0+\half)\right)}
\qquad\qquad\qquad\\
&\geq
\int\tilde\varphi^4
\left(
(2-\frac12)(|u_{s\th}|^2+|u_{\th\th}|^2)
\right)
-C\|\tau(u)\|_{L^2(\Cyl_\Lep(s_0))}^2,
\eeqas
and by Wirtinger's inequality (i.e. the Poincar\'e inequality in one dimension)
$$\int_{\{s\}\times S^1}|u_{\th\th}|^2d\th\geq 
\int_{\{s\}\times S^1}|u_{\th}|^2d\th,$$
and so
\beqs
\Theta''(s_0)
+\frac32 \Th(s_0)
+\frac{1}{8}\left(\Th(s_0-\half)+\Th(s_0+\half)\right)
\geq
-C\|\tau(u)\|_{L^2(\Cyl_\Lep(s_0))}^2.
\eeqs
Now 
that
$\ep_2$, and hence $\La=\Lep$ has been fixed, we choose $\de>0$ sufficiently small so that whenever $s_0\in (-X_\de,X_\de)$ we automatically have $|s_0|\leq X(\ell)-\La$.
Note that unless $\ell>0$ is small enough, 
the claim will be vacuous because the collar $\Col(\ell)$ will contain no points of injectivity radius less than $\de$.
Also,
for $s\in \Cyl_\La(s_0)$, we have
$$\rho(s)\leq C\rho(s_0),$$
thanks to \eqref{rho_equiv},
so that 
switching from the tension $\tau(u)$ computed with respect to the flat metric to $\tau_g(u)$ computed with respect to $g$ gives
$$\|\tau(u)\|_{L^2(\Cyl_\Lambda(s_0))}^2
=\|\rho\,\tau_g(u)\|_{L^2(\Cyl_\Lambda(s_0),g)}^2\leq
\rho(s_0)^2\|\tau_g(u)\|_{L^2(\Cyl_\Lambda(s_0),g)}^2,$$
$g$ the hyperbolic metric on the collar, from which we conclude the lemma. 
\end{proof}

Equipped with Proposition \ref{prop:angular-energy}, we are finally in a position to prove the main estimate for the weighted angular energy.

\begin{proof}[Proof of Lemma \ref{ang_en_est}]
For the given, $N$, $E_0$ and map $u$, apply Proposition
\ref{prop:angular-energy} to give $C$, $\Lambda$ and $\de$ such that 
$$\int_{s_0-\frac12}^{s_0+\frac12}\int_{S^1}\abs{u_\theta}^2 d\theta ds\leq C\cdot e^{-(X(\ell)-\abs{s_0})} 
+C\cdot \int_{-X_\de}^{X_\de}e^{-\abs{s-s_0}}\rho^2(s)\cdot \norm{\tau_g(u)}_{L^2(\Cyl_\Lambda(s),g)}^2 ds$$
for all $s_0\in (-X_\de,X_\de)$. 
Multiplying by $\rho^{-2}(s_0)$ and integrating, we find that
\beqa
\label{wisteria}
\int_{-X_\de}^{X_\de}\int_{S^1} \rho^{-2}(s)|u_\th|^2(s,\th)d\th ds &\leq 
C\int_{-X_\de}^{X_\de}\rho^{-2}(s_0)\int^{s_0+1/2}_{s_0-1/2}
\int_{S^1}|u_\th|^2(s,\th)d\th\, ds\, ds_0\\
&\ \!\!\!\!\!\!\!\!\!\!\!\!\!\!\!\!\!\!\!
\leq 
C\int_{-X_\de}^{X_\de}\rho^{-2}(s_0)e^{-(X(\ell)-\abs{s_0})}ds_0\\
&\ \!\!\!\!\!\!\!\!\!\!\!\! +
C\int_{-X_\de}^{X_\de}\int_{-X_\de}^{X_\de}e^{-\abs{s-s_0}}\rho^{-2}(s_0)\rho^2(s) \norm{\tau_g(u)}_{L^2(\Cyl_\Lambda(s),g)}^2 ds
\,ds_0,
\eeqa
where we have appealed to \eqref{rho_equiv} to see that
for $s\in (s_0-\half,s_0+\half)$, we have
$\rho^{-2}(s)\leq C \rho^{-2}(s_0)$, or equivalently 
$\rho(s_0)\leq C\rho(s)$.

{\bf Claim:} For any $q\in [-X(\ell),X(\ell)]$, the function
$f(s):= \rho^{-2}(s)e^{-2|s-q|/3}$ for $s\in [-X(\ell),X(\ell)]$ is maximised when $s=q$. That is,
\beq \label{rho-monotone} 
\rho^{-2}(s)e^{-2|s-q|/3}\leq \rho^{-2}(q).\eeq

To see the claim, take logarithms, and differentiate, estimating using \eqref{rho_deriv} that for $s<q$ we have
$(\log f)'\geq -\frac{2}{\pi}+\frac{2}{3}>0$, whereas for $s>q$ we have $(\log f)'\leq +\frac{2}{\pi}-\frac{2}{3}<0$ as required.

Now, the first of the final two terms on the right-hand side 
of \eqref{wisteria} is bounded, independently of $\ell$, since 
by the claim we have
$$\rho^{-2}(s_0)e^{-(X(\ell)-\abs{s_0})}\leq 
\rho^{-2}(X(\ell))e^{-(X(\ell)-\abs{s_0})/3}\leq
\pi^2 e^{-(X(\ell)-\abs{s_0})/3}$$
for $\ell\in (0,2\arsinh(1))$ by \eqref{rho_range}.

To handle the final term on the right-hand side 
of \eqref{wisteria}, we can apply the claim again to find that
$$\int_{-X_\de}^{X_\de}e^{-\abs{s-s_0}}\rho^{-2}(s_0)ds_0
\leq \rho^{-2}(s)
\int_{-X_\de}^{X_\de}e^{-\abs{s-s_0}/3}ds_0
\leq C\rho^{-2}(s),$$
and hence we can improve \eqref{wisteria} to
\beqa
\label{wisteria2}
\int_{-X_\de}^{X_\de}\int_{S^1} \rho^{-2}(s)|u_\th|^2(s,\th)d\th ds &\leq 
C+
C\int_{-X_\de}^{X_\de}\norm{\tau_g(u)}_{L^2(\Cyl_\Lambda(s),g)}^2 ds\\
&\leq
C\left(1+\norm{\tau_g(u)}_{L^2(\Col(\ell),g)}^2\right),
\eeqa
because $\Cyl_\Lambda(s)\subset\Col(\ell)$ for all $s\in (-X_\de,X_\de)$.

To complete the proof, note that on the $\delta\thick$ part of the collar, described by 
$\{(s,\th): \abs{s}\in[X_\de,X(\ell))\}$,
the weight function is bounded by 
$\rho^{-2}\leq  \rho^{-2}(X_\de)\leq\pi^2\delta^{-2}$, compare \eqref{est:rho-inj-appendix}, 
so we may estimate 
$$\left(\int_{-X(\ell)}^{-X_\de}
+\int_{X_\de}^{X(\ell)}\right)
\int_{S^1} \rho^{-2}(s)|u_\th|^2(s,\th)d\th ds 
\leq 2\pi^2\delta^{-2} E(u)$$
in terms of the total energy $E(u)\leq E_0$.

Combining this with \eqref{wisteria2} completes the proof.
\end{proof}

\section{Paths in Teichm\"uller space}
\label{length:evol:sect}

Our goal in this section is to prove Lemma \ref{lengthevol}.
The main issue is to understand in detail the structure of the space of holomorphic quadratic differentials as the underlying metric degenerates by pinching one or more collars.
Our treatment follows on from our work in \cite{RTZ, RT2}, emphasising the geometric analysis of the subject, 
and is particularly adapted to get refined estimates that help us understand the projection $P_g$.
However, the general area of Weil-Petersson geometry has been considered by many authors, and our work makes connection with previous work as we describe below, 
even if we work independently to it. 
Example background references include Masur \cite{Masur}, Yamada \cite{Yamada1, Yamada2}, Wolpert \cite{Wolpert03, Wolpert07, Wolpert12} and the references therein. See also the recent work of Mazzeo and Swoboda \cite{maz_swo}.

As we explain below, a central part of our theory will be to make a decomposition of the space of holomorphic quadratic differentials on a surface, viewed as the tangent space to 
Teichm\"uller space, corresponding to a collection of geodesics having small lengths $\ell_i$. In addition to the kernel of $\{\partial \ell_i\}$ we effectively take the
dual basis to $\{\partial \ell_i\}$ on the orthogonal complement, as described in Remark \ref{geom_interpret}.
The refined $L^p$ estimates on this basis that we require in our applications are given in Lemma \ref{lemma:basis}.

\subsection{Structure of the space of holomorphic quadratic differentials}
\label{fouriersect}

We will need to understand the properties of holomorphic quadratic differentials on oriented closed hyperbolic surfaces, in particular in regions where the injectivity radius $\inj_g(p)$
is small. One fundamental fact of hyperbolic surface theory, 
cf. \cite{Hu}, Proposition IV.4.2, 
is that for any $0 <\delta < {\arsinh(1)}$, the $\delta$-thin part of the surface, consisting of all points at which the injectivity radius is less than $\de$, 
is given by a finite  union of disjoint hyperbolic cylinders of finite length around closed geodesics, which are explicitly described by the Collar lemma \ref{lemma:collar} 
of Keen-Randol. 
(Recall that these closed geodesics have length $\ell$, which we assume always to be less than $2\arsinh(1)$.)
Away from these collars, and more generally on the subset $\de\thick(M,g)$ of points $p$ with injectivity radius $\inj_{g}(p)\geq \de$, where $\de>0$, 
holomorphic quadratic differentials are well controlled. 
The most basic estimate, following 
from standard estimates for holomorphic functions on discs, is
that for $\Phi\in\Hol(M,g)$, $p\in [1,\infty)$
and $\de>0$, we have
\beq\label{est:holo-thick}
\norm{\Phi}_{{L^{\infty}(\de\thick(M,g))}}\leq C_\de\norm{\Phi}_{L^p(M,g)}\eeq
with $C_\de$ depending only on $\de$, 
cf. Lemma A.8 in \cite{RTZ}. While this estimate is sufficient for most of the paper, it is useful at times to observe 
that indeed
\beq\label{est:holo-thick-dehalf}
\norm{\Phi}_{{L^{\infty}(\de\thick(M,g))}}\leq C_\de\norm{\Phi}_{L^p(\frac\de2\thick(M,g))}\eeq
for any $0<\de<\arsinh(1)$, still with a constant $C_\de$ depending only on  $\de$. 

The Fourier decomposition of holomorphic quadratic differentials $\Phi$ on each hyperbolic collar $(\Col,g)$ 
\beqs 
\label{Fourier_decomp}
\Phi=\bigg(\sum_{n=-\infty}^\infty b_n e^{ns}\,e^{in\th}\bigg)\cdot dz^2, \quad b_n\in\C\eeqs
where $dz=ds+id\th$ as before,
gives an $L^2(\Col,g)$-orthogonal decomposition of each such $\Phi$
into its principal part $b_0dz^2=b_0(\Phi)dz^2=b_0(\Phi,\Col)dz^2$ and its \emph{collar decay} part $\om^\perp(\Phi,\Col):=\Phi-b_0(\Phi)dz^2$ which,
by  \cite[Lemma 2.2]{RTZ},
satisfies the key estimate
\beq 
\label{RTZLemma2.2}
\norm{\om^\perp(\Phi,\Col)}_{L^{\infty}(\de\thin(\Col,g))}\leq C\de^{-2}e^{-\frac\pi\de}\norm{\om^\perp(\Phi,\Col)}_{L^2(\de_0\thick(\Col,g))},
\eeq
for some universal constants $C<\infty$ and $\de_0\in (0,\arsinh(1))$, and any
$\de\in (0,\de_0]$, cf. \cite[section 3]{Wolpert12}.
(Here we are writing $\de\thin(\Col,g):=\Col\intersect \de\thin(M,g)$, and similarly for $\de\thick$.)

On the other hand, when $\Phi\in\Hol(M,g)$ we can also control the right-hand side of 
\eqref{RTZLemma2.2}, and even the $L^\infty$ norm, as follows.
Since we have made an orthogonal decomposition of $\Phi$, we know that 
$\langle\Phi,dz^2\rangle_{L^2(\Col,g)}=b_0(\Phi)\|dz^2\|_{L^2(\Col)}^2$, and hence
$$|b_0(\Phi,\Col)|\leq \frac{\|\Phi\|_{L^1(\Col)}\|dz^2\|_{L^\infty(\Col)}}{\|dz^2\|_{L^2(\Col)}^2}\leq C\ell\|\Phi\|_{L^1(\Col)},$$
for universal $C$, by \eqref{sizes_on_collars}.

Consequently, noting that, by \eqref{sizes_on_collars} and \eqref{est:rho-inj-appendix}, 
$\norm{dz^2}_{L^\infty(\de_0\thick(\Col,g))}\leq 2\pi^2\delta_0^{-2}$ is bounded above independently of $\ell\in (0,2\arsinh(1))$, 
and recalling \eqref{est:holo-thick}, we have by the triangle inequality
\beqa
\label{dethickest}
\norm{\om^\perp(\Phi,\Col)}_{L^\infty(\de_0\thick(\Col,g))}
&\leq
\norm{\Phi}_{L^\infty(\de_0\thick(\Col,g))}
+\norm{b_0(\Phi)dz^2}_{L^\infty(\de_0\thick(\Col,g))}\\
&\leq 
C\norm{\Phi}_{L^1(M,g)}
\eeqa
for universal $C<\infty$.
Using the fact that collars have uniformly bounded area as $\ell\downto 0$ (see \cite[Lemma A.5]{RTZ}), the norm on the right-hand side of \eqref{RTZLemma2.2} 
is controlled by the left-hand side of \eqref{dethickest}, and we find that
\beq 
\label{est-collar-decay}
\norm{\om^\perp(\Phi,\Col)}_{L^{\infty}(\de\thin(\Col,g))}\leq C\de^{-2}e^{-\frac\pi\de}\norm{\Phi}_{L^1(M,g)},
\eeq
for universal $C<\infty$, and any
$\de\in (0,\de_0]$.
Thus, deep within a collar, only the principal part of a holomorphic quadratic differential is significant.

In the special case that $\de=\de_0$, estimate \eqref{est-collar-decay} 
can be added to \eqref{dethickest} to give a bound on the whole collar of 
\beq
\label{wholecollar}
\norm{\om^\perp(\Phi,\Col)}_{L^{\infty}(\Col,g)}\leq 
C\norm{\Phi}_{L^1(M,g)},
\eeq
for a universal constant $C<\infty$.

Remark that the above estimates could also be derived based on the observation that $\om^\perp$ and $b_0dz^2$ are orthogonal 
also on subcylinders
of the collar, in particular on the $\de_0\thick$-part. Indeed, combined with \eqref{RTZLemma2.2} and 
\eqref{est:holo-thick-dehalf} this gives the slight improvement of \eqref{est-collar-decay} that
\beq 
\label{est-collar-decay-withde0}
\norm{\om^\perp(\Phi,\Col)}_{L^{\infty}(\de\thin(\Col,g))}\leq C\de^{-2}e^{-\frac\pi\de}\norm{\Phi}_{L^1(\frac{\de_0}2\thick(M,g))},
\eeq
still with universal $C$ and any $\de\in (0,\de_0]$, which will be needed in the proof of Lemma \ref{lemma:basis} later on. 

Following \cite{RTZ}, given an oriented closed hyperbolic surface $(M,g)$ on which we identify $k$ collars $\Col_1,\ldots,\Col_k$, we can then define the subspace 
\beqs 
W:=\{\Th\in\Hol(M,g):\, b_0(\Th,\Col_j)dz^2=0 \text{ for every } j\in\{1\ldots k\}\,\}\eeqs
of all holomorphic quadratic 
differentials with principal part equal to zero on each of the $k$ collars.

\begin{rmk}
\label{kernel_rmk}
When we return to viewing the space of holomorphic quadratic differentials $\Hol(M,g)$ on $(M,g)$ as tangent vectors in $\m_{-1}$ at $g$, via the isomorphism $\Phi\mapsto Re(\Phi)$, the subspace $W$ will have a simple 
geometric interpretation (albeit one that we will not require in any argument below). 
Indeed, if the lengths of the geodesics at the centre of the collars 
$\Col_1,\ldots,\Col_k$ are given as $\ell_1,\ldots,\ell_k$ near to $g$ in $\m_{-1}$, then $W$ will represent the kernel of 
$(\partial\ell_1,\ldots,\partial\ell_k)$ within $\Hol(M,g)$.
Note that here we view $\Hol$ as a real vector space with a complex structure $J$, and write
$\partial \ell_j:= (d\ell_j - iJd\ell_j)/2$, acting on $\Hol$ within its complexification. Thus, an element $v\in\Hol$ lies in the kernel of $\partial \ell_j$ precisely if both $v$ and $Jv$ lie in the kernel of $d\ell_j$.
See also Remarks \ref{simple_formula_holo_case} and \ref{geom_interpret}.
\end{rmk}

The following lemma is part of \cite[Lemma 2.4]{RTZ}, cf. \cite{Masur}.

\begin{lemma}[Decomposition of $\Hol(M,g_n)$: Definition of $W_n$]\label{W_structure}
Given an oriented closed surface $M$ of genus $\ga\geq 2$, there exists a 
universal constant $C$ such that the following is true.
Suppose $g_n$ is a sequence of hyperbolic metrics on $M$, and
apply the differential geometric Deligne-Mumford compactness 
Proposition \ref{Mumford}
in order to pass to a subsequence and obtain precisely 
$k\geq 0$ collars $\Col^j_n$ degenerating.
After omitting finitely many terms, the subspace
\beq \label{def:Wm} W_n:=\left\{\Th\in\Hol(M,g_n):\, b_0(\Th,\Col_n^j)dz^2=0 \text{ for every } j\in\{1\ldots k\}\right\}\eeq
of holomorphic quadratic differentials that have vanishing principal part on every collar $\Col^j_n$, $j\in \{1\ldots k\}$, is of (complex) dimension $3(\gamma-1)-k$,
and elements $w\in W_n$ decay rapidly along the collars in the sense that for sufficiently small $\delta>0$ (so that the 
$\de\thin$ part of $(M,g_n)$ lies within 
$\union_{j=1}^k\Col_n^j$ for every $n$) independent of $n$,
we have 
\beq
\label{w_decay}
\|w\|_{L^\infty(\de\thin(M,g_n))}\leq
C\cdot \delta^{-2}e^{-\pi/\delta}
\|w\|_{L^2(M,g_n)}.
\eeq

Furthermore, the spaces $W_n$ converge to $\Hol(\Si,h)$,  the space of integrable holomorphic quadratic differentials
on the typically noncompact limit surface 
$(\Si,h)$ from Proposition \ref{Mumford}, in the sense that for every 
$w\in\Hol(\Si,h)$ there exists a sequence $w_n\in W_n$ with  
\beq \label{conv:wn}f_n^*w_n\to w \text{ smoothly locally on } \Si \text{ and } \norm{w}_{L^2(\Si,h)}=\lim_{n\to\infty}\norm{w_n}_{L^2(M,g_n)}.\eeq
\end{lemma}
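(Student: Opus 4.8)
\textbf{Proof plan for Lemma \ref{W_structure}.}

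The plan is to build the statement up from the local structure theory of holomorphic quadratic differentials on collars, already assembled in the excerpt, together with the Deligne--Mumford compactness of Proposition \ref{Mumford}. Concretely I would proceed as follows.

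First, the dimension count. By definition $W_n$ is the kernel of the linear map $\Hol(M,g_n)\to\C^k$ given by $\Th\mapsto(b_0(\Th,\Col_n^1),\dots,b_0(\Th,\Col_n^k))$, so $\dim_\C W_n\geq 3(\ga-1)-k$ always, and the content is that this map is surjective for $n$ large. One way to see this is to work on the limit side: the limit surface $(\Si,h)$ is a (possibly disconnected) hyperbolic surface with $2k$ cusps, hence $\dim_\C\Hol(\Si,h)=3(\ga-1)-k$ by Riemann--Roch (each pair of cusps coming from a pinched collar lowers the genus contribution but the punctures compensate so that the total is $3(\ga-1)-k$; this is exactly the count recorded in \cite{RTZ}). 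Granting the convergence statement \eqref{conv:wn}, one gets $\liminf_n\dim_\C W_n\geq \dim_\C\Hol(\Si,h)=3(\ga-1)-k$; combined with the trivial upper bound $\dim_\C W_n\leq 3(\ga-1)-k$ coming from the kernel-of-rank-$k$-map description, equality holds for large $n$. Alternatively, and perhaps cleaner, one shows directly that the $k$ functionals $b_0(\cdot,\Col_n^j)$ are linearly independent on $\Hol(M,g_n)$ for large $n$ by exhibiting, for each $j$, a differential in $\Hol(M,g_n)$ whose principal part on $\Col_n^j$ is nonzero while its principal parts on the other collars are small — such differentials can be produced by gluing in a model differential $\sim dz^2$ on $\Col_n^j$ and correcting, or extracted from the limit.

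Second, the decay estimate \eqref{w_decay}. This is immediate from the theory already developed: an element $w\in W_n$ has, by construction, $b_0(w,\Col_n^j)=0$ for each $j$, so $w=\om^\perp(w,\Col_n^j)$ on each collar $\Col_n^j$. Choosing $\de>0$ small enough (independently of $n$, using that only finitely many isomorphism types of configurations of short geodesics occur) that $\de\thin(M,g_n)\subset\bigcup_{j=1}^k\Col_n^j$, we apply \eqref{RTZLemma2.2} on each collar together with the bound that the $L^2$ norm over the $\de_0\thick$ part of a collar is controlled by $\|w\|_{L^2(M,g_n)}$, to get
$$\|w\|_{L^\infty(\de\thin(M,g_n))}=\max_j\|\om^\perp(w,\Col_n^j)\|_{L^\infty(\de\thin(\Col_n^j,g_n))}\leq C\de^{-2}e^{-\pi/\de}\|w\|_{L^2(M,g_n)},$$
for a universal $C$. (Here I am using $b_0=0$, so the estimate \eqref{est-collar-decay} reduces to exactly this form, with the right-hand side in terms of $\|w\|_{L^2}$ rather than $\|w\|_{L^1}$.)

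Third, the convergence statement. Given $w\in\Hol(\Si,h)$, I would produce $w_n\in W_n$ via the diffeomorphisms $f_n\colon\Si\to M\setminus\bigcup_j\sigma_n^j$: pull $w$ forward by $f_n$ on a large compact exhaustion of $\Si$, cut off near the cusps, and project the resulting (non-holomorphic) quadratic differential onto $\Hol(M,g_n)$; then subtract its principal parts on the collars to land in $W_n$. One checks that the cutoff error and the principal-part correction tend to zero in $L^2$ as $n\to\infty$ (the principal parts are small because $w$ decays like the square of the distance to the cusp, i.e. exponentially in collar coordinates, so its $L^1$ mass near the neck — hence its $b_0$ — is small), giving $f_n^*w_n\to w$ smoothly locally and $\|w_n\|_{L^2(M,g_n)}\to\|w\|_{L^2(\Si,h)}$. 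Conversely, that $W_n$ cannot carry "extra" $L^2$ mass escaping into the collars follows from the decay estimate \eqref{w_decay} just proved, which is what forces the norms to match in the limit rather than merely satisfy an inequality. Since this lemma is quoted from \cite[Lemma 2.4]{RTZ}, the honest write-up can largely cite that reference for the convergence part and only reprove the pieces ((dimension count and decay)) needed in the self-contained form stated here.

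\textbf{Main obstacle.} The genuinely delicate point is the convergence/norm-matching in \eqref{conv:wn}: controlling the projection error when one projects the approximate differential onto $\Hol(M,g_n)$ uniformly in $n$ as the collars pinch, and in particular ruling out $L^2$ concentration of elements of $W_n$ in the degenerating necks (which is exactly where the decay estimate \eqref{w_decay}, resting on \eqref{RTZLemma2.2}, does the work). The dimension count and the decay estimate themselves are comparatively routine given the machinery already in place in Section \ref{fouriersect}.
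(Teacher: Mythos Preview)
Your proposal is sound, and in fact goes further than the paper itself: the paper does not prove this lemma but simply records it as ``part of \cite[Lemma 2.4]{RTZ}, cf.\ \cite{Masur}'', adding only the remark that the decay estimate \eqref{w_decay} also follows from \eqref{est-collar-decay} (with a genus-dependent constant). Your derivation of \eqref{w_decay} directly from \eqref{RTZLemma2.2} is cleaner in that it yields the universal constant claimed in the lemma, and your acknowledgement that the convergence part should be cited from \cite{RTZ} matches the paper's treatment exactly.
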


Note that \eqref{w_decay} also  follows from \eqref{est-collar-decay}, albeit with a constant $C$ now depending on the genus.
Note also that a combination of \eqref{w_decay} and \eqref{est:holo-thick} tells us that we can also control 
\beq
\label{w_decay2}
\|w\|_{L^\infty(M,g_n)}\leq
C\cdot \|w\|_{L^2(M,g_n)},
\eeq
for all $w\in W_n$, or, via \eqref{est-collar-decay}, alternatively with the $L^1$ norm on the right-hand side (with $C$ depending only on 
$\inf\{\inj_{g_n}(x):\, x\in 
M\backslash\union_{j=1}^k\Col_n^j,\, n\in\N\}$ i.e. with $C$ independent of $n$).

\begin{cor}
\label{W_proj_est}
In the setting of Lemma \ref{W_structure}, there exists a  constant $C<\infty$ such that for 
any $n$ and any $\Psi\in\Qu_{L^2}(M,g_n)$, we have
\beq
\label{was4.14}
\|P_{g_n}^{W_n}(\Psi)\|_{L^\infty(M,g_n)}\leq C\|\Psi\|_{L^1(M,g_n)}.
\eeq
\end{cor}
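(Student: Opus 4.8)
The plan is to show that $P_{g_n}^{W_n}$ is bounded from $L^1$ to $L^\infty$ by combining two facts: first, that the image $W_n$ consists of holomorphic quadratic differentials whose $L^\infty$ norm is controlled by their $L^2$ norm uniformly in $n$ (which is exactly \eqref{w_decay2}); and second, that an $L^2$-orthogonal projection onto a finite-dimensional subspace whose elements satisfy such a reverse estimate is automatically bounded from $L^1$ to $L^2$ with a uniform constant. Concretely, write $P := P_{g_n}^{W_n}$ and set $w := P(\Psi)$, so that $w \in W_n$ and, by the defining property of the orthogonal projection, $\langle w, w\rangle_{L^2(M,g_n)} = \langle \Psi, w\rangle_{L^2(M,g_n)}$.

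First I would estimate the right-hand side crudely by Hölder:
\[
\|w\|_{L^2(M,g_n)}^2 = \langle \Psi, w\rangle_{L^2(M,g_n)} \leq \|\Psi\|_{L^1(M,g_n)}\,\|w\|_{L^\infty(M,g_n)}.
\]
Next I would invoke \eqref{w_decay2}, which gives a constant $C$, independent of $n$, with $\|w\|_{L^\infty(M,g_n)} \leq C\|w\|_{L^2(M,g_n)}$ for every $w\in W_n$. Plugging this in yields
\[
\|w\|_{L^2(M,g_n)}^2 \leq C\,\|\Psi\|_{L^1(M,g_n)}\,\|w\|_{L^2(M,g_n)},
\]
and hence (assuming $w\neq 0$, else the conclusion is trivial) $\|w\|_{L^2(M,g_n)} \leq C\|\Psi\|_{L^1(M,g_n)}$. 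Applying \eqref{w_decay2} once more then gives
\[
\|P_{g_n}^{W_n}(\Psi)\|_{L^\infty(M,g_n)} = \|w\|_{L^\infty(M,g_n)} \leq C\|w\|_{L^2(M,g_n)} \leq C\|\Psi\|_{L^1(M,g_n)},
\]
which is \eqref{was4.14} after renaming constants.

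The only point requiring genuine care — and the main obstacle, such as it is — is making sure the constant in \eqref{w_decay2} is truly uniform in $n$. This is precisely what Lemma \ref{W_structure} (together with the remark following it) provides: the bound $\|w\|_{L^\infty(M,g_n)}\leq C\|w\|_{L^2(M,g_n)}$ holds with $C$ depending only on the genus (via $\inf\{\inj_{g_n}(x): x\in M\setminus\bigcup_j \Col_n^j,\, n\in\N\}$, which is bounded below by the constant $\arsinh(1)$ from the thick/thin decomposition plus the definition of the collars), hence independent of $n$. Once that uniformity is in hand the argument above is immediate; no further structure of $W_n$ or of the degenerating surfaces is needed, since we only use the reverse inequality \eqref{w_decay2} and the variational characterisation of the orthogonal projection. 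Note in particular that we never needed the full strength of the rapid decay estimate \eqref{w_decay} along collars — only its consequence \eqref{w_decay2}.
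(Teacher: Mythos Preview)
Your proof is correct and is essentially identical to the paper's own argument: both use $\|w\|_{L^2}^2 = \langle \Psi, w\rangle \leq \|\Psi\|_{L^1}\|w\|_{L^\infty}$ together with the uniform reverse inequality \eqref{w_decay2}, and then combine. The paper leaves the final ``combining'' implicit where you spell it out, but there is no difference in substance.
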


\begin{proof}
Estimate \eqref{w_decay2} implies
$$\|P_{g_n}^{W_n}(\Psi)\|_{L^\infty(M,g_n)}\leq C
\|P_{g_n}^{W_n}(\Psi)\|_{L^2(M,g_n)}.$$
On the other hand, by definition,
$$\|P_{g_n}^{W_n}(\Psi)\|_{L^2(M,g_n)}^2
=\langle \Psi,P_{g_n}^{W_n}(\Psi)\rangle\leq
\|\Psi\|_{L^1(M,g_n)}\|P_{g_n}^{W_n}(\Psi)\|_{L^\infty(M,g_n)}.$$
Combining these two estimates gives the result.
\end{proof}

Before we discuss the structure of the $L^2$-orthogonal complement $W_n^\perp$, we briefly discuss the
size of both the principal and the collar decay part of holomorphic quadratic differentials on the \textit{$\de$-thick} part of a collar, for \emph{fixed} $\de\in  (0,\arsinh(1))$.
Since we have made an orthogonal decomposition we know that for any $\Phi\in\Hol(M,g)$ and any collar $\Col=\Col(\ell)$
in $(M,g)$, we have
\beq 
\label{est:apriori-upper-bound-b0}
\abs{b_0(\Phi,\Col)}\leq \norm{dz^2}_{L^2(\Col,g)}^{-1}\norm{\Phi}_{L^2(M,g)}=\left(\frac{32\pi^5}{\ell^3}-\frac{16\pi^4}{3}+O(\ell^2)\right)^{-1/2}\norm{\Phi}_{L^2(M,g)},
\eeq
by \eqref{sizes_on_collars}. 
Since $\abs{dz^2}= 2\rho^{-2}$ is bounded above (and below) 
on $\de\thick(\Col,g)$ independently of $\ell$ -- see \eqref{est:rho-inj-appendix} and \eqref{est:rho-by-inj} -- this means in particular that 
for any $L^2$-unit element $\Phi\in \Hol(M,g)$ we have
\beq
\label{b0_2}
\abs{b_0(\Phi,\Col)dz^2}\leq C\ell^{3/2}
\eeq
on the $\de$-thick part of the collar, where $C$ depends only on $\de$. 

Meanwhile, on this thick part of $\Col(\ell)$ the \emph{collar decay} part of a unit element can be of order $1$, i.e. huge compared with
the principal part.

The following lemma guarantees that such a difference in size does not occur for elements of the orthogonal complement of $W_n$;
we obtain a basis of $W_n^\perp$ with each element concentrated on one of the degenerating collars 
and there having principal part almost as large as it can be according 
to \eqref{est:apriori-upper-bound-b0}, and with the remaining collar decay part now only of order $O(\ell^{3/2})$, i.e. of the same order as the principal part on the thick
part of the surface.

\begin{lemma}[Decomposition of $\Hol(M,g_n)$: Decomposition of 
$W_n^\perp$]
\label{Wnperp}
Suppose we are in the setting of Lemma \ref{W_structure}, and have identified the subspace $W_n$ of complex dimension $3(\ga-1)-k$.
Then there exist a constant $C\in (0,\infty)$, and for all $\de>0$
a further constant $C_\de\in (0,\infty)$,
such that for sufficiently large $n$
we can find an $L^2$-orthonormal basis $\{\Om_n^j\}$, $1\leq j\leq k$ of the $L^2$-orthogonal complement $W_n^\perp$ of $W_n$ in $\Hol(M,g_n)$, 
with one element concentrated on each collar  $\Col_n^j$ in the sense that 
for each 
$j\in\{1,\ldots,k\}$,
\beq\label{est:Om-complement}
\norm{\Om_n^j}_{L^\infty(M\setminus \Col_n^j,g_n)}\leq C(\ell_n^j)^{3/2},
\eeq
with the principal parts on the other collars $\Col_n^i$ 
controlled by the stronger bounds
\beq 
\label{est:princ_different_collar}
\abs{b_0(\Om_n^j,\Col_n^i)}\leq C (\ell_n^j)^{3/2}(\ell_n^i)^3 ,\quad  i\neq j.
\eeq
On the one collar $\Col_n^j$ where $\Om_n^j$ concentrates it is
essentially given as a constant multiple of $dz^2$ in the sense that
\beq\label{est:Om-collar} 
\norm{\Om_n^j-\beta_n^j dz^2}_{L^\infty(\Col_n^j,g_n)}\leq C (\ell_n^j)^{3/2} \text{ where } \bigg|\frac{(\ell_n^j)^{3/2}}{(32\pi^5)^{1/2}}-\beta_n^j\bigg|\leq C(\ell_n^j)^{9/2},
\eeq
for $\beta_n^jdz^2=b_0(\Om_n^j,\Col_n^j)dz^2$ the principal part on $\Col_n^j$ and $\ell_n^j$ the length of the central geodesic in $(\Col_n^j,g_n)$. 
Moreover, for each $j\in\{1,\ldots,k\}$, we have
\beq
\label{L1Omest}
\|\Om_n^j\|_{L^1(M,g_n)}\leq C(\ell_n^j)^{1/2},
\eeq
\beq
\label{LinfOmest}
\norm{\Om_n^j}_{L^\infty(M,g_n)}\leq 
C(\ell_n^j)^{-1/2},
\eeq
and
\beq
\label{extra_control_on_si}
\norm{\Om_n^j}_{L^\infty(\de\thick(M,g_n))}\leq C_\de(\ell_n^j)^{3/2}.
\eeq
\end{lemma}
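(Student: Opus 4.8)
The plan is to construct the basis $\{\Om_n^j\}$ by applying Gram--Schmidt to a carefully chosen spanning set of $W_n^\perp$, where the obvious candidates are the $L^2$-orthogonal projections onto $W_n^\perp$ of the ``model'' differentials that are, roughly, $dz^2$ concentrated on the $j$-th collar. More precisely, since $W_n$ is by definition the set of $\Th\in\Hol(M,g_n)$ with $b_0(\Th,\Col_n^i)=0$ for all $i$, the complement $W_n^\perp$ has dimension $k$, and the linear functionals $\Th\mapsto b_0(\Th,\Col_n^j)$ for $j=1,\dots,k$ restrict to a basis of $(W_n^\perp)^*$. I would begin by picking, for each $j$, the holomorphic quadratic differential $\Psi_n^j\in\Hol(M,g_n)$ characterised by $b_0(\Psi_n^j,\Col_n^i)=\delta_{ij}$, or more conveniently work with $P^{W_n^\perp}_{g_n}(\chi_j\, dz^2)$ where $\chi_j$ is a cutoff supported in $\Col_n^j$; then normalise. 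The key structural input is Lemma \ref{W_structure} (decay of elements of $W_n$) together with the orthogonal Fourier decomposition $\Phi=b_0(\Phi)dz^2+\om^\perp(\Phi)$ on each collar and the estimates \eqref{RTZLemma2.2}, \eqref{est-collar-decay}, \eqref{dethickest}, \eqref{b0_2}.

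The main steps in order would be: (1) Establish that any $L^2$-unit $\Th\in W_n^\perp$ has $|b_0(\Th,\Col_n^j)|$ comparable to $(\ell_n^j)^{3/2}$ for at least one $j$ — this is the ``nondegeneracy'' of the pairing and uses that the collar-decay parts $\om^\perp(\Th,\Col_n^j)$ contribute an $L^2$-mass that is controlled, via \eqref{RTZLemma2.2}, once we know $\Th$ is $L^2$-small away from the collars, which in turn is exactly the defining property \eqref{w_decay} transplanted to the complement through the convergence $W_n\to\Hol(\Si,h)$ in \eqref{conv:wn}. (2) Diagonalise: show the $k\times k$ matrix $M_{ij}:=\langle \Om', \Om''\rangle$ built from the natural spanning set is, after rescaling each row/column by $(\ell_n^j)^{3/2}$, close to the identity (the off-diagonal entries being smaller by a factor $(\ell_n^i)^3(\ell_n^j)^3$-type quantities coming from \eqref{est:princ_different_collar}-style bounds), so that Gram--Schmidt perturbs the model differentials only negligibly. (3) Read off \eqref{est:Om-collar} from the fact that on $\Col_n^j$ the element $\Om_n^j$ equals $\beta_n^j dz^2+\om^\perp$ with $\|\om^\perp\|_{L^\infty}\le C(\ell_n^j)^{3/2}$ by \eqref{est-collar-decay} applied to the unit-normalised $\Om_n^j$ scaled appropriately, and $\beta_n^j$ determined up to the stated error by the normalisation $\|\Om_n^j\|_{L^2}=1$ together with $\|dz^2\|_{L^2(\Col)}^2=\tfrac{32\pi^5}{\ell^3}+O(1)$ from \eqref{sizes_on_collars}. (4) Derive \eqref{est:Om-complement}, \eqref{est:princ_different_collar}, \eqref{extra_control_on_si} by combining \eqref{est:holo-thick}, \eqref{b0_2}, \eqref{est-collar-decay-withde0}; and finally \eqref{L1Omest}, \eqref{LinfOmest} by integrating/taking sup of the collar description against the bounded area of collars and the $\rho^{-2}$ bounds \eqref{est:rho-inj-appendix}.

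For the $L^1$ bound \eqref{L1Omest}: on $\Col_n^j$ we have $\Om_n^j\approx \beta_n^j dz^2$ with $|\beta_n^j|\sim (\ell_n^j)^{3/2}$ and $\int_{\Col}|dz^2|\,dv_g=\int_\Col 2\rho^{-2}\cdot\rho^2\,dsd\th=2\,\mathrm{area}_{\text{flat}}(\Col)$, but here one must use $\int_\Col |dz^2|dv_g=2\int_\Col ds\,d\th$ which is $O(\ell^{-1})$ (the flat cylinder has length $\sim 2\pi^2/\ell$), giving $\|\Om_n^j\|_{L^1(\Col_n^j)}\lesssim (\ell_n^j)^{3/2}\cdot(\ell_n^j)^{-1}=(\ell_n^j)^{1/2}$; off the collar \eqref{est:Om-complement} and bounded area finish it. Similarly \eqref{LinfOmest} follows since $\|\Om_n^j\|_{L^\infty(\Col_n^j)}\le |\beta_n^j|\,\|dz^2\|_{L^\infty(\Col_n^j)}+C(\ell_n^j)^{3/2}$ and $\|dz^2\|_{L^\infty(\Col_n^j)}=2\sup\rho^{-2}\sim \ell_n^j{}^{-2}$ near the central geodesic, whence $(\ell_n^j)^{3/2-2}=(\ell_n^j)^{-1/2}$.

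The hard part will be Step (1)–(2): proving the uniform (in $n$) nondegeneracy of the functionals $b_0(\cdot,\Col_n^j)$ on $W_n^\perp$ with the \emph{sharp} power $(\ell_n^j)^{3/2}$, rather than some weaker comparability. The subtlety is that a naive lower bound on $\sum_j|b_0(\Th,\Col_n^j)|$ for unit $\Th\in W_n^\perp$ could in principle degenerate as some $\ell_n^j\to 0$; one must show that the ``missing'' $L^2$-mass of $\Th$ cannot hide in the collar-decay parts, which requires feeding the $L^\infty$ collar-decay estimate \eqref{RTZLemma2.2} back in together with the fact that $W_n$ (which absorbs the decaying directions) has been split off. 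I expect the cleanest route is via the Deligne--Mumford limit and \eqref{conv:wn}: a unit element of $W_n^\perp$, if its principal parts all decayed faster than $(\ell_n^j)^{3/2}$, would force (after suitable rescaling on each collar by $(\ell_n^j)^{-3/2}$ and passing to a subsequential limit) a nonzero limiting holomorphic quadratic differential on $\Si$ that is simultaneously in the limit of $W_n$ and orthogonal to it, a contradiction — so this limiting/compactness argument, quantified, is where the real work lies.
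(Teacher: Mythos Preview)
Your proposal is essentially the paper's approach: define a preliminary basis $\{\widetilde\Om_n^j\}$ of $W_n^\perp$ by the principal-part conditions $b_0(\widetilde\Om_n^j,\Col_n^i)=0$ for $i\neq j$, establish the key quantitative estimate via a Deligne--Mumford compactness/contradiction argument using \eqref{conv:wn}, derive the remaining bounds from \eqref{est:holo-thick-dehalf}, \eqref{est-collar-decay-withde0}, \eqref{sizes_on_collars}, and then Gram--Schmidt using the near-orthogonality to pass to $\{\Om_n^j\}$. One technical point: the paper phrases the hard step not as a lower bound on $|b_0(\widetilde\Om_n^j,\Col_n^j)|$ but equivalently as the upper bound $\|\widetilde\Om_n^j\|_{L^1(\de\thick(M,g_n))}\le C_\de(\ell_n^j)^{3/2}$, and in the contradiction argument the correct normalisation is by this thick-part $L^1$ norm (on a carefully chosen $\de_n\to 0$ with $\de_n\Lambda_n\to\infty$), not by $(\ell_n^j)^{-3/2}$ as you suggest --- your rescaling would not give a well-defined global object with the right compactness properties, whereas the $L^1$-on-thick normalisation makes both ``Claim 1: mass stays on thick part'' and ``Claim 2: limit is orthogonal to all of $\Hol(\Si,h)$ hence zero'' go through cleanly.
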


This lemma will be derived from the following lemma which is an improvement of \cite[Lemma 2.6 and Corollary 2.7]{RT2} and which 
gives an explicit definition of basis elements concentrating
on collars, but does not guarantee orthogonality 

\begin{lemma}\label{lemma:basis}
Suppose we are in the setting of Lemma \ref{W_structure}, and have identified the subspace $W_n$ of complex dimension $3(\ga-1)-k$.
Then there exist a constant $C\in (0,\infty)$, and for all $\de>0$
a further constant $C_\de\in (0,\infty)$, 
such that for sufficiently large $n$, 
the $L^2$-unit holomorphic quadratic differentials $\widetilde\Om_n^1,\ldots,\widetilde\Om_n^k\in W_n^\perp$
characterised by
\beq\label{def:Omw}
b_0(\widetilde\Om_n^j,\Col_n^i)=0 \text{ for every } i\neq j \text{ while } b_0(\widetilde\Om_n^j,\Col_n^j)>0
\eeq
concentrate on the respective collars $\Col_n^1,\ldots,\Col_n^k$ in the sense that for each $j\in\{1,\ldots,k\}$, we have
\beq\label{est:tOm-complement}
\norm{\widetilde\Om_n^j}_{L^\infty(M\setminus \Col_n^j,g_n)}\leq C(\ell_n^j)^{3/2},
\eeq
while on this collar $\widetilde\Om_n^j$ is essentially given as a constant multiple of $dz^2$ in the sense that 
\beq\label{est:tOm-collar} 
\norm{\widetilde\Om_n^j-\tilde\beta_n^j dz^2}_{L^\infty(\Col_n^j,g_n)}\leq C (\ell_n^j)^{3/2},
\eeq
for the principal part $\tilde\beta_n^jdz^2=b_0(\widetilde\Om_n^j,\Col_n^j)dz^2$ of $\widetilde\Om_n^j$ 
satisfying 
\beq
\label{b_estimate}
1-C(\ell_n^j)^3\leq \tilde\beta_n^j \norm{dz^2}_{L^2(\Col_n^j,g_n)}\leq 1.
\eeq
Moreover, the elements $\widetilde\Om_n^j$ are almost orthogonal,
\beq \label{est:Om-orthogonal}
\langle \widetilde\Om_n^j,\widetilde\Om_n^i\rangle_{L^2(M,g)}\leq C(\ell_n^j)^{3/2}(\ell_n^i)^{3/2}, \qquad i\neq j
\eeq 
and for each $j\in\{1,\ldots,k\}$, we have
\beq
\label{L1claimnew}
\|\widetilde\Om_n^j\|_{L^1(M,g_n)}\leq C(\ell_n^j)^{1/2},
\eeq
\beq
\label{tLinfOmest}
\norm{\widetilde\Om_n^j}_{L^\infty(M,g_n)}\leq 
C(\ell_n^j)^{-1/2},
\eeq
and
\beq \label{est:key-Omw}
\norm{\widetilde\Om_n^j}_{L^1(\delta\thick(M,g_n))}\leq 
C_\de (\ell_n^j)^{3/2}.
\eeq
\end{lemma}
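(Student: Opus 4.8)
The plan is to build the differentials $\widetilde\Om_n^j$ explicitly from an orthonormal-type basis adapted to the degenerating collars, estimate each one on the thick part and on the collars separately, and then combine. First I would recall the decomposition $\Hol(M,g_n) = W_n \oplus W_n^\perp$ from Lemma \ref{W_structure}, noting $\dim_{\C} W_n^\perp = k$. The linear functionals $\Th \mapsto b_0(\Th,\Col_n^i)$, $i=1,\dots,k$, restricted to $W_n^\perp$, form a basis of $(W_n^\perp)^*$ by the very definition of $W_n$ as their joint kernel; hence there is a unique $\widetilde\Om_n^j \in W_n^\perp$ characterised (up to normalisation and phase) by $b_0(\widetilde\Om_n^j,\Col_n^i)=0$ for $i \neq j$, and we fix the scaling and phase so that $\|\widetilde\Om_n^j\|_{L^2}=1$ and $b_0(\widetilde\Om_n^j,\Col_n^j)>0$. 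To make the estimates quantitative I would instead first produce an \emph{approximate} solution: take a cutoff of $\beta\,dz^2$ supported in $\Col_n^j$ with $\beta$ chosen so the principal part is prescribed, correct it to be holomorphic by solving a $\dbar$-problem whose right-hand side is supported where the cutoff varies (i.e. near the ends of $\Col_n^j$, where $\rho$ is of order $1$), and finally project onto $W_n^\perp$ and normalise. The correction terms are small because the source term of the $\dbar$-equation lives on a region of bounded geometry and has size controlled by $(\ell_n^j)^{3/2}$ after the $L^2$-normalisation, using \eqref{est:apriori-upper-bound-b0} and \eqref{b0_2}.

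The key steps, in order. \textbf{(1)} Establish \eqref{est:tOm-complement}: away from $\Col_n^j$, the principal part of $\widetilde\Om_n^j$ on every \emph{other} collar vanishes by construction, so on each $\Col_n^i$ ($i\neq j$) only the collar-decay part $\om^\perp(\widetilde\Om_n^j,\Col_n^i)$ survives, and this is controlled by \eqref{est-collar-decay}/\eqref{est-collar-decay-withde0} in terms of $\|\widetilde\Om_n^j\|_{L^1}$; combined with the interior estimate \eqref{est:holo-thick} on the thick part, one gets the $L^\infty$ bound $C(\ell_n^j)^{3/2}$ provided one knows $\|\widetilde\Om_n^j\|_{L^1(M\setminus\Col_n^j)}$ is of order $(\ell_n^j)^{3/2}$ — which is exactly \eqref{est:key-Omw} and must be bootstrapped alongside. \textbf{(2)} Establish \eqref{est:tOm-collar} and \eqref{b_estimate}: on $\Col_n^j$ itself, write $\widetilde\Om_n^j = \tilde\beta_n^j dz^2 + \om^\perp(\widetilde\Om_n^j,\Col_n^j)$; the decay part is again small by \eqref{est-collar-decay} away from the ends, and near the ends it matches the thick-part bound from step (1). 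The value of $\tilde\beta_n^j$ is pinned down by the normalisation $\|\widetilde\Om_n^j\|_{L^2}=1$: since $\|dz^2\|_{L^2(\Col_n^j)}$ dominates (it blows up like $\ell^{-3/2}$ by \eqref{sizes_on_collars}) while the $L^2$ norm of the remainder is $O(\ell_n^j)^{3/2}$, Pythagoras gives $\tilde\beta_n^j\|dz^2\|_{L^2(\Col_n^j)} = 1 + O((\ell_n^j)^3)$. \textbf{(3)} Deduce \eqref{L1claimnew}: integrate $|\widetilde\Om_n^j|$, splitting into $\Col_n^j$ and its complement; on $\Col_n^j$ the dominant term $|\tilde\beta_n^j dz^2| = 2|\tilde\beta_n^j|\rho^{-2}$ integrates to $|\tilde\beta_n^j|\,\|dz^2\|_{L^1(\Col_n^j)} \sim (\ell_n^j)^{3/2}\cdot (\ell_n^j)^{-1}$ using the area/norm asymptotics, giving $C(\ell_n^j)^{1/2}$, and the complement contributes $O((\ell_n^j)^{3/2})$ by step (1). \textbf{(4)} Deduce \eqref{tLinfOmest}: the $L^\infty$ norm is attained on $\Col_n^j$ near its centre where $\rho^{-2} \sim \ell^{-1}$, so $|\widetilde\Om_n^j| \lesssim |\tilde\beta_n^j|\cdot\ell^{-1} \sim (\ell_n^j)^{3/2}\cdot(\ell_n^j)^{-2} = (\ell_n^j)^{-1/2}$. \textbf{(5)} Deduce \eqref{est:key-Omw} from \eqref{est:tOm-complement} and the uniformly bounded area of the thick part: $L^1 \le L^\infty \times \text{area}$. \textbf{(6)} Deduce almost-orthogonality \eqref{est:Om-orthogonal}: $\langle \widetilde\Om_n^j,\widetilde\Om_n^i\rangle = \int_M \widetilde\Om_n^j \overline{\widetilde\Om_n^i}$; split over collars and thick part, and on each piece bound the integrand using that at least one of the two factors is of size $O((\ell_n^{\bullet})^{3/2})$ there while the other is controlled in $L^1$ by steps (1),(3), giving the product bound.

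The main obstacle will be step (1)–(2), i.e. simultaneously controlling the collar-decay part on \emph{every} collar and the behaviour near the \emph{ends} of the collars. The estimate \eqref{est-collar-decay} is only good deep inside a collar ($\delta$-thin part), while near the ends one must patch with the thick-part estimate \eqref{est:holo-thick} — and that thick-part estimate itself needs an $L^1$ bound which one is still in the process of proving, so there is a genuine bootstrap: one should set up an a priori estimate (say, assume $\|\widetilde\Om_n^j\|_{L^1(M)} \le \Lambda (\ell_n^j)^{1/2}$ for some large $\Lambda$) and close it. Handling the cross-collar principal parts requires care: even though $b_0(\widetilde\Om_n^j,\Col_n^i)=0$ by construction, one must verify that this construction is consistent — that the $k$ conditions on a $k$-dimensional space are genuinely independent with uniformly-bounded-below determinant as $n\to\infty$, which follows from the convergence statement \eqref{conv:wn} in Lemma \ref{W_structure} together with the fact that on the limit surface the $k$ principal-part functionals associated to the $k$ punctured pairs are linearly independent. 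Once this non-degeneracy is in hand, the explicit cutoff-plus-$\dbar$-correction construction makes all constants effective, and the remaining estimates are the routine collar computations using \eqref{sizes_on_collars}, \eqref{est:rho-inj-appendix}, \eqref{est:rho-by-inj} and \eqref{rho_equiv}.
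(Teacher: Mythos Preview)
Your proposal takes a genuinely different route from the paper, and the route you sketch has a real gap at precisely the point you flag as ``the main obstacle''.

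\textbf{What the paper does.} The paper does \emph{not} construct $\widetilde\Om_n^j$ via a cutoff-plus-$\dbar$-correction. Instead it takes $\widetilde\Om_n^j$ as abstractly characterised by \eqref{def:Omw} and proves the key estimate \eqref{est:key-Omw} \emph{first}, by a compactness/contradiction argument: assuming $(\ell_n^j)^{-3/2}\|\widetilde\Om_n^j\|_{L^1(\bar\de\thick)}\to\infty$ along a subsequence, it renormalises to $\widehat\Om_n$ with $\|\widehat\Om_n\|_{L^1(\de_n\thick)}=1$ for a carefully chosen $\de_n\to 0$, extracts a Deligne--Mumford limit $\widehat\Om_\infty\in\Hol(\Si,h)$, and then shows two contradictory facts: a definite portion of the $L^1$-mass survives on some fixed $\de$-thick part (Claim~1), yet $\widehat\Om_\infty=0$ because it is $L^2$-orthogonal to every $w\in\Hol(\Si,h)$, since each such $w$ is a limit of elements of $W_n$ by \eqref{conv:wn} (Claim~2). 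Once \eqref{est:key-Omw} is in hand, \eqref{est:tOm-complement} follows from \eqref{est:holo-thick-dehalf} and \eqref{est-collar-decay-withde0}, and then \eqref{est:tOm-collar}, \eqref{b_estimate}, \eqref{est:Om-orthogonal}, \eqref{L1claimnew}, \eqref{tLinfOmest} follow by the collar computations you outline.

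\textbf{Where your argument breaks.} You propose the opposite logical order: establish \eqref{est:tOm-complement} first and deduce \eqref{est:key-Omw} in step~(5). But your step~(1) already needs $\|\widetilde\Om_n^j\|_{L^1(\frac{\de_0}{2}\thick)}\le C(\ell_n^j)^{3/2}$ to feed into \eqref{est:holo-thick-dehalf} and \eqref{est-collar-decay-withde0}, so the argument is circular. The bootstrap you suggest --- assume $\|\widetilde\Om_n^j\|_{L^1(M)}\le\Lambda(\ell_n^j)^{1/2}$ and close --- does not close: plugging this into \eqref{est:holo-thick} gives only $\|\widetilde\Om_n^j\|_{L^\infty(\de\thick)}\le C_\de\Lambda(\ell_n^j)^{1/2}$, off by a full power of $\ell_n^j$ from the $(\ell_n^j)^{3/2}$ you need, and no iteration improves the exponent. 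The $\dbar$-correction idea produces a holomorphic $\Phi_j\in\Hol(M,g_n)$ concentrated on $\Col_n^j$, but $\Phi_j$ will neither lie in $W_n^\perp$ nor have vanishing principal parts on the other collars, and controlling the passage from $\Phi_j$ to the uniquely characterised $\widetilde\Om_n^j$ requires exactly the uniform structural information about $W_n^\perp$ that the lemma is supposed to deliver. The ``uniformly-bounded-below determinant'' you invoke at the end is the right instinct, but making it quantitative enough to gain the missing power of $\ell_n^j$ is essentially the content of the lemma; the paper's compactness argument is how this is actually achieved.
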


\begin{rmk}
\label{rem:b_value}
Note that by virtue of \eqref{sizes_on_collars} and \eqref{b_estimate}, the constants $\tilde\be_n^j$ are constrained by
\beq
\label{b_value}
\bigg|\tilde\be_n^j-\frac{(\ell_n^j)^{3/2}}{(32\pi^5)^{1/2}}\bigg|
\leq C(\ell_n^j)^{9/2},
\eeq
in particular 
\beqs 
\left\|\widetilde \Om_n^j-\frac{(\ell_n^j)^{3/2}}{(32\pi^5)^{1/2}}dz^2\right\|_{L^\infty(\Col_n^j,g_n)}\leq C (\ell_n^j)^{3/2}.
\eeqs
\end{rmk}

\begin{rmk}
With more effort, the dependencies of the constants $C$, $C_\delta$ in Lemma \ref{lemma:basis} can be dramatically improved, although we do not require such refinements in this paper.
For example, by working directly, one can show that the constants in \eqref{L1claimnew} and \eqref{tLinfOmest} can be chosen to be universal (cf. \cite{RT2}).
Improved dependencies in Lemma \ref{lemma:basis} then yield improved dependencies in Lemma \ref{Wnperp}.
\end{rmk}

\begin{rmk}
\label{geom_interpret}
For $n$ so large that the collars have been almost pinched, the basis 
$\{\Om_n^j\}$ of $W_n^\perp$ will be very similar to the basis
$\{\widetilde\Om_n^j\}$. A further similar basis in this limit, see Wolpert \cite{Wolpert07}, can be given in terms of the scaled Weil-Petersson gradients $\{-\sqrt\frac{\pi}{2\ell_n^j}\grad\ell_n^j\}$.
In contrast, our basis  $\{\widetilde\Om_n^j\}$ can be viewed as the dual basis to $\{\partial\ell_n^j\}$, with each element scaled to unit length. 
Here $\partial\ell^j_n=(d\ell^j_n - iJd\ell^j_n)/2$ is like in Remark \ref{kernel_rmk} except that we restrict it to the complement $W_n^\perp$ of the kernel of
$(\partial \ell^1_n,..,\partial \ell^k_n)$
within $\Hol(M,g_n)$. One should also compare with the work of Masur \cite{Masur}.

\end{rmk}

\begin{rmk}
\label{A6}
Although we will not need them, we remark that based on \eqref{est:tOm-collar} and \eqref{est:tOm-complement} we get refinements of \eqref{est:tOm-complement}
and \eqref{est:tOm-collar} at the \emph{central geodesics} of a collar. That is, 
for $i\neq j$, we have
\beqs
\norm{\widetilde\Om_n^j}_{L^\infty(\si_n^i,g_n)}\leq C(\ell_n^j)^{3/2}
\left((\ell_n^i)^{-2}e^{-\frac{2\pi}{\ell_n^i}}\right),
\eeqs
and for each $i$,
\beqs
\norm{\widetilde\Om_n^i-\tilde \be_n^i dz^2}_{L^\infty(\si_n^i,g_n)}\leq C (\ell_n^i)^{-1/2}e^{-\frac{2\pi}{\ell_n^i}}.
\eeqs
\end{rmk}

\begin{proof}[Proof of Lemma \ref{lemma:basis}]
As Lemma \ref{W_structure} has told us, after having omitted finitely many terms, the subspace 
$W_n^{\perp}$ has dimension $k$, and we may as well assume that $k\geq 1$, otherwise the lemma that we are proving is vacuous. 
Therefore, for each fixed $j\in\{1,\ldots,k\}$, there is a 
unique element $\widetilde\Om_n^j$ of $W_n^\perp$ with $\norm{\widetilde\Om_n^j}_{L^2(M,g_n)}=1$ 
satisfying \eqref{def:Omw}.
The key step in the proof of Lemma \ref{lemma:basis} is to show that claim \eqref{est:key-Omw} holds true for this basis of $W_n^\perp$.
Assume instead that there exists a number $\bar\de>0$ such that after passing to a subsequence
\beq 
\label{ass:La-to-contradict}
\La_n:=(\ell_n^j)^{-3/2}\norm{\widetilde \Om_n^j}_{L^1(\bar\de\thick(M,g_n))}\to \infty \quad\text{ as } n\to\infty,
\eeq
for some $1\leq j\leq n$, say for $j=1$. 
 
We now choose a sequence $\de_n\in (0,\bar\de]$ with $\de_n\to 0$ so that still
\beq 
\label{def:de-n} 
\La_n\cdot\de_n\to \infty \text{ as } n\to\infty\eeq
and set 
\beq \label{def:lambdan}
\lambda_n:=\norm{\widetilde\Omega_n^1}_{L^1(\de_n\thick(M,g_n))}\geq \La_n (\ell_n^1)^{3/2}.\eeq

We then consider the normalised sequence 
$$\widehat{\Om}_n:= (\lambda_n)^{-1}\cdot\widetilde\Om_n^1,$$ 
so
\beq
\label{Omhatnormalisation}
\norm{\widehat\Om_n}_{L^1(\de_n\thick(M,g_n))}=1,
\eeq
and prove the following two claims which obviously contradict each other.

{\bf Claim 1:} [Not everything disappears down the collar.] There exists a number $\de>0$ such that for all $n$ sufficiently large
\beqs 
\norm{\widehat\Omega_n}_{L^1(\de\thick(M,g_n))}\geq \frac12.
\eeqs

{\bf Claim 2:} [Everything disappears down the collar.] After passing to a subsequence and pulling back by the diffeomorphisms $f_i:\Si\to M\setminus \cup_{j=1}^k \sigma^{j}_{i}$ given by 
Proposition \ref{Mumford}, we have
\beq \label{claim2}
f_n^*\widehat\Omega_n\to 0 \text{ smoothly locally on } \Si,\eeq
which implies
$$\lim_{n\to\infty}\norm{\widehat\Omega_n}_{L^1(\de\thick(M,g_n))}=0\quad \text{ for every } \de>0.$$

\begin{proof}[Proof of Claim 1]
Recall that for sufficiently small $\de>0$, the $\de\thin$ part of $(M,g_n)$ is given as a  union of disjoint subsets of the degenerating collars $\Col_n^i$.
In view of the normalisation \eqref{Omhatnormalisation},
we thus need to show that (after possibly reducing $\de>0$)
\beq\label{est:claim1}
\sum_{i=1}^k\norm{\widehat\Omega_n}_{L^1\big(\de\thin(\Col_n^i)\setminus\de_n\thin(\Col_n^i)\big)} 
\leq \frac12
\eeq
for all $n$ sufficiently large.
By definition, the principal part of 
$\widetilde\Om_n^1$, and thus of $\widehat\Omega_n$, on $\Col_n^i$ vanishes
for all $i\neq 1$, so that \eqref{est-collar-decay-withde0} applies, resulting for 
$n$ large in an estimate of
\beqas 
\norm{\widehat\Omega_n}_{L^1(\de\thin(\Col_n^i)\setminus\de_n\thin(\Col_n^i))}
&\leq C \norm{\widehat\Omega_n}_{L^\infty(\de\thin(\Col_n^i))}
\leq C\de^{-2}e^{-\pi/\de}\norm{\widehat\Omega_n}_{L^1(\frac{\de_0}{2}\thick(M,g_n))}\\
&\leq \frac1{4k} \norm{\widehat\Omega_n}_{L^1(\de_n\thick(M,g_n))}=\frac1{4k}
\eeqas
provided $\de>0$ is initially chosen small enough.
The same estimate is of course valid also for the collar decay part $\om^\perp(\widehat\Omega_n,\Col_n^1)$. 

In order to prove \eqref{est:claim1} it thus remains to control the contribution of the principal part to the $L^1$ norm on $\de\thin(\Col_n^1)\setminus\de_n\thin(\Col_n^1)$.
Here we crucially use the assumption \eqref{ass:La-to-contradict} which means that 
the principal part of $\widetilde\Om_n^1$ is small compared to $\widetilde\Om_n^1$ on the 
$ \bar \de$-thick part of the surface. More precisely,
by the a priori bound 
\eqref{b0_2}
valid for all elements of $\Hol$, we know that
$$\frac{\norm{b_0(\widetilde\Om_n^1,\Col_n^1)dz^2}_{L^1(\bar\de\thick(\Col_n^1))}}{\norm{\widetilde\Om_n^1}_{L^1(\bar\de\thick(M,g_n))}}\leq \frac{C_{\bar\de}(\ell_n^1)^{3/2}}{\La_n(\ell_n^1)^{3/2}}
\to 0\text{ as } n\to\infty.$$
The relation \eqref{def:de-n} now guarantees that this property is preserved for our sequence $\de_n\to 0$: 
Indeed, by 
Proposition \ref{X_Xdelta}
and \eqref{sizes_on_collars}, we have
$$\norm{dz^2}_{L^1(\de_n\thick(\Col_n^1))}=8\pi(X(\ell_n^1)-X_{\de_n}(\ell_n^1))\leq C\de_n^{-1},$$
and combining this estimate with 
\eqref{est:apriori-upper-bound-b0}, \eqref{def:de-n} and \eqref{def:lambdan} yields
\beqas
\frac{\norm{b_0(\widetilde\Om_n^1,\Col_n^1)dz^2}_{L^1(\de_n\thick(\Col_n^1))}}{\norm{\widetilde\Om_n^1}_{L^1(\de_n\thick(M,g_n))}}
&=\frac{\abs{b_0(\widetilde\Om_n^1,\Col_n^1)}.\norm{dz^2}_{L^1(\de_n\thick(\Col_n^1))}}{\lambda_n}\\
&\leq\frac{C(\ell_n^1)^{3/2}}{\lambda_n\delta_n}\leq \frac{C}{\Lambda_n\de_n}
\to 0 \eeqas
as $n\to\infty$. But the left-hand side is precisely 
the $L^1$ norm of the principal part of the normalised $\widehat\Om_n$ on $\de_n\thick(\Col_n^1)$, so that
$$\norm{\widehat\Om_n}_{L^1(\de\thin(\Col_n^i)\setminus\de_n\thin(\Col_n^i))}\to 0$$
is certainly less than $\frac14$ for $n$ large, completing the proof of \eqref{est:claim1}, i.e. Claim 1.
\end{proof}

\begin{proof}[Proof of Claim 2]
Observe that while the $L^1$-norms of $\widehat\Om_n$ are in general unbounded, our normalisation \eqref{Omhatnormalisation} implies that 
for every $\de>0$ 
$$\limsup_{n\to\infty}\norm{\widehat\Om_n}_{L^1(\de\thick(M,g_n))}\leq 1.$$
To begin with, we remark that such a bound turns out to be sufficient
to extract a subsequence in $n$ so that 
$f_n^*\widehat\Om_n$ converges smoothly locally to a limit 
$\widehat\Om_\infty\in\Hol(\Si,h)$, $f_n$ the diffeomorphisms of the Deligne-Mumford compactness Proposition \ref{Mumford};
indeed, as $f_n^*\widehat\Om_n$ are holomorphic with respect to the complex structures $f_n^*c_n\to c$ their $C^k$ norm 
on balls of a given radius are bounded in terms of their $L^1$ norms on slightly larger balls, resulting in (uniform in $n$) bounds of
$$\norm{f_n^*\widehat\Om_n}_{C^k(f_n^*(\de\thick(M,g_n)))}\leq C_{k,\de}\norm{\widehat\Om_n}_{L^1(\de/2\thick(M,g_n))}\leq C_{k,\de}$$
for every $\de>0$, $k\in\N$.
Combined with the theorem of Arzela-Ascoli, this then allows us to extract a subsequence to give smooth local convergence 
$f_n^*\widehat\Om_n\to \widehat\Om_\infty$
to a holomorphic quadratic differential on 
$\Si$; for details of this argument we refer to Section A.3 of \cite{RTZ}, in particular to Lemma A.9 and its proof.
Since additionally
\beq 
\label{eq:L1-limit}
\norm{\widehat\Om_\infty}_{L^1(\de\thick(\Si,h))}=\lim_{n\to\infty}\norm{\widehat\Om_n}_{L^1(\de\thick(M,g_n))}\leq 1 \text{ for any }\de>0, \eeq
see \cite[Lemma A.7]{RTZ}, $  \widehat\Om_\infty$ is indeed an element of $\Hol(\Si,h)$, the space of all 
holomorphic quadratic differentials on the noncompact limit surface with finite $L^1(\Si,h)$ norm, or by \cite[Lemma A.11]{RTZ} equivalently, with
finite $L^2(\Si,h)$-norm. 
As we claim that $  \widehat\Om_\infty$ is identically zero we therefore need to prove that 
\beqs
\langle w,  \widehat\Om_\infty\rangle_{L^2(\Si,h)}= 0 \text{ for every } w\in\Hol(\Si,h).\eeqs

Given $w\in\Hol(\Si,h)$ we let $w_n\in W_n$ be an approximating sequence as in \eqref{conv:wn} and use that, by definition, $\widehat\Om_n$ is an element of $W_n^\perp$ 
so that for any $\de>0$
\beqa  \label{est:claim2-first-step}
\abs{\langle w,  \widehat\Om_\infty\rangle_{L^2(\Si,h)}}&\leq \abs{ \langle w, \widehat\Om_\infty\rangle_{L^2(\de\thick(\Si,h))}}
+\abs{ \langle w, \widehat\Om_\infty\rangle_{L^2(\de\thin(\Si,h))}}\\
&\leq \abs{\lim_{n\to\infty}\langle w_n,\widehat\Om_n\rangle_{L^2(\de\thick(M,g_n))}}+\norm{w}_{L^2(\de\thin(\Si,h))}\cdot\norm{\widehat\Om_\infty}_{L^2(\de\thin(\Si,h))}\\
&=\abs{\lim_{n\to\infty}\langle w_n,\widehat\Om_n\rangle_{L^2(\de\thin(M,g_n))}}+R(\de)
\eeqa 
where we observe that $R(\de)=\norm{w}_{L^2(\de\thin(\Si,h))}\cdot\norm{\widehat\Om_\infty}_{L^2(\de\thin(\Si,h))}\to0$ as $\de\to 0$ since 
both $w$ and $\widehat\Om_\infty$ have finite $L^2(\Si,h)$ norm. 

On the other hand, we recall that for $\de>0$ sufficiently small, 
the $\de$-thin part of $(M,g_n)$ is given as the union of the $\de$-thin parts of the degenerating collars $\Col_n^{i}$, and stress once more that the 
collar decay part of any holomorphic quadratic differential is $L^2$-orthogonal to $dz^2$ on arbitrary subcylinders of the collars. 
Since the principal part of $w_n$ on \textit{all}  degenerating collars $\Col_n^i$ is zero, 
the inner product in \eqref{est:claim2-first-step} is given only in terms of the 
collar decay parts of $ \widehat\Om_n$ so, by \eqref{est-collar-decay-withde0} and \eqref{w_decay}, for $n$ sufficiently large (so that $\de_n<\frac{\de_0}2$)
\beqas 
\abs{\langle w_n,\widehat\Om_n\rangle_{L^2(\de\thin(M,g_n))}}
&\leq \sum_{i=1}^k\norm{w_n}_{L^2(\de\thin(\Col_n^i))}\cdot \norm{\om^\perp(\widehat\Om_n,\Col_n^i)}_{ L^2(\de\thin(\Col_n^i))}\\
&\leq C\big(\de^{-2}e^{-\pi/\de}\big)^2\sum_{i=1}^k
\norm{w_n}_{L^2(M,g_n)}\cdot \norm{\widehat\Om_n}_{L^1(\frac{\de_0}2\thick(M,g_n))}\\
&\leq C\big(\de^{-2}e^{-\pi/\de}\big)^2.
\eeqas

Combined with \eqref{est:claim2-first-step} we thus find that 
$$\abs{\langle w,\widehat\Om_\infty\rangle_{L^2(\Si,h)}}\leq R(\de)+C\de^{-4}e^{-2\pi/\de}\to 0\text{ as } \de\to0$$
so that the limit $\widehat\Om_\infty$ obtained above must be zero, proving not only \eqref{claim2}, but at the same time, by 
\eqref{eq:L1-limit}, also the claimed $L^1$-bounds.
\end{proof}

This completes the proof of the key estimate \eqref{est:key-Omw}.

As for the remaining parts, \eqref{est:tOm-complement} 
follows from \eqref{est:key-Omw}, 
\eqref{est:holo-thick-dehalf} 
and 
\eqref{est-collar-decay-withde0},
if we choose $\de\in (0,\de_0]$ to be sufficiently small so that the $\de\thin$ part of $(M,g_n)$ is contained within $\union_{j=1}^k\Col_n^j$ for sufficiently large $n$.

Estimate \eqref{est:tOm-collar} follows from 
\eqref{est-collar-decay-withde0}, \eqref{est:key-Omw} and \eqref{b_value}.

To establish \eqref{b_estimate}, we compute
\beqas
1&=\|\widetilde\Om_n^j\|_{L^2(M,g_n)}^2=
\|\widetilde\Om_n^j\|_{L^2(M\backslash\Col_n^j,g_n)}^2+\|\widetilde\Om_n^j\|_{L^2(\Col_n^j,g_n)}^2\\
&=\|\widetilde\Om_n^j\|_{L^2(M\backslash\Col_n^j,g_n)}^2
+\|
\tilde\be_n^j
dz^2\|_{L^2(\Col_n^j,g_n)}^2
+\|\widetilde\Om_n^j-\tilde\be_n^j dz^2\|_{L^2(\Col_n^j,g_n)}^2.
\eeqas
This implies the second inequality of \eqref{b_estimate}, but also, when combined with \eqref{est:tOm-complement} and 
\eqref{est:tOm-collar}, it gives
$$1-\|\tilde\be_n^j
dz^2\|_{L^2(\Col_n^j,g_n)}^2
\leq C(\ell_n^j)^3.$$

To show \eqref{est:Om-orthogonal}
we use the orthogonality of the principal and collar decay parts as well as that $b_0(\widetilde\Om_n^i,\Col_n^j)=0$ for $i\neq j$ to see that the inner product depends only on terms that 
are small according to \eqref{est:tOm-collar} and \eqref{est:tOm-complement}, namely
\beqas 
\langle \widetilde\Om_n^i,\widetilde\Om_n^j\rangle_{L^2(M,g_n)}&= 
\langle \widetilde \Om_n^i-\widetilde\beta_n^i dz^2,\widetilde\Om_n^j\rangle_{L^2(\Col_n^i,g_n)}
+ \langle \widetilde \Om_n^i,\widetilde\Om_n^j-\tilde\beta_n^j dz^2\rangle_{L^2(\Col_n^j,g_n)}\\
& \quad +\langle \widetilde \Om_n^i,\widetilde\Om_n^j\rangle_{L^2(M\setminus(\Col_n^j\cup\Col_n^i),g_n)}\eeqas
which implies \eqref{est:Om-orthogonal}.

Finally, both  \eqref{L1claimnew} and \eqref{tLinfOmest}
follow from \eqref{est:tOm-complement}, \eqref{est:tOm-collar}, \eqref{b_value} and \eqref{sizes_on_collars}.
\end{proof}

\begin{proof}[Proof of Lemma \ref{Wnperp}.]
The orthonormal bases $\{\Om_n^j\}$ will arise as slight adjustments of the unit vectors $\{\widetilde\Om_n^j\}$
from Lemma \ref{lemma:basis}. To simplify notations, we fix $n$ and drop the subscript $n$.
By the estimate
\eqref{est:Om-orthogonal}
we can adjust $\{\widetilde\Om^j\}$ to an orthonormal basis $\{\Om^j\}$ inductively using Gram-Schmidt, setting $\Om^1=\widetilde\Om^1$ and 
$$\Om^j:=\left[\widetilde\Om^j-\sum_{i=1}^{j-1}
\langle\widetilde\Om^j,\Om^i\rangle\Om^i\right]\la_j^{-1},$$
for $j=2,\ldots,k$,
where $\la_j:=\|\widetilde\Om^j-\sum_{i=1}^{j-1}
\langle\widetilde\Om^j,\Om^i\rangle\Om^i\|_{L^2(M,g)}$.
Based on \eqref{est:Om-orthogonal} we may then prove by induction that
for $j=2,\ldots,k$, we may write
\beq
\label{changeofbasis}
\Om^j=\widetilde\Om^j\la_j^{-1}
+\sum_{i=1}^{j-1}c_{ji}\widetilde\Om^i,
\eeq
with 
\beq
\label{cest}
\begin{aligned}
|c_{ji}|&\leq C
(\ell^j \ell^i)^{3/2}\quad&\text{if } k\geq j>i\geq 1\\
1-\la_j^2&=\sum_{i=1}^{j-1} (\langle\widetilde\Om^j,\Om^i\rangle)^2\leq 
C(\ell^j)^3\sum_{i=1}^{j-1}(\ell^i)^3\quad&\text{ if } j\in\{2,\ldots,k\}.
\end{aligned}
\eeq

Because of \eqref{changeofbasis} and \eqref{cest}, we see that
\eqref{est:Om-complement}, \eqref{est:princ_different_collar}, \eqref{est:Om-collar},
\eqref{L1Omest}, \eqref{LinfOmest} and \eqref{extra_control_on_si}
hold for the orthonormal basis $\{\Om_j\}$ because of the analogous statements we proved for Lemma \ref{lemma:basis}.
For example, to prove \eqref{est:Om-complement}, we estimate
\beqas
\norm{\Om^j}_{L^\infty(M\setminus \Col^j,g)}
&=\norm{\widetilde\Om^j\la_j^{-1}+\sum_{i=1}^{j-1}c_{ji}\widetilde\Om^i}_{L^\infty(M\setminus \Col^j,g)}\\
&\leq
C\norm{\widetilde\Om^j}_{L^\infty(M\setminus \Col^j,g)}+
\sum_{i=1}^{j-1}|c_{ji}|.\norm{\widetilde\Om^i}_{L^\infty(M\setminus \Col^j,g)}\\
&\leq 
C(\ell^j)^{3/2}+
C\sum_{i=1}^{j-1}(\ell^j\ell^i)^{3/2}
(\ell^i)^{-1/2}
\\
&\leq C(\ell^j)^{3/2},
\eeqas
while \eqref{est:princ_different_collar} follows immediately from \eqref{changeofbasis}, \eqref{cest}, \eqref{b_value} and the definition of $\widetilde\Om^i$.
To prove \eqref{est:Om-collar} we use \eqref{cest} 
first to obtain the bound on
the principal part $\beta^jdz^2=\lambda_j^{-1}\tilde\beta^jdz^2$
$$\bigg|\beta^j-\frac{(\ell^j)^{3/2}}{(32\pi^5)^{1/2}}\bigg|\leq \bigg|\tilde\beta^j-\frac{(\ell^j)^{3/2}}{(32\pi^5)^{1/2}}\bigg|+\abs{\tilde\beta^j}.\abs{\la_j^{-1}-1}
\leq C(\ell^j)^{9/2}$$
from \eqref{b_value}, 
and then to derive the bound on the collar decay part
\beqas
\norm{\Om^j-
\beta^jdz^2
}_{L^\infty(\Col^j,g)}
&\leq 
\norm{\la_j^{-1}\left(\widetilde\Om^j-\tilde\beta^jdz^2\right)}_{L^\infty(\Col^j,g)}
+\sum_{i=1}^{j-1}|c_{ji}|.\norm{\widetilde\Om^i}_{L^\infty(\Col^j,g)}\\
&\leq C(\ell^j)^{3/2}+C(\ell^j)^{3/2}\sum (\ell^i)^{3/2}(\ell^i)^{3/2}\leq C(\ell^j)^{3/2}
\eeqas
from the corresponding bound \eqref{est:tOm-collar} on $\tilde \Om^j$ as well as from \eqref{est:tOm-complement}.

To prove \eqref{L1Omest}
we use \eqref{changeofbasis}, \eqref{cest} and \eqref{L1claimnew}.
Recalling how we proved \eqref{tLinfOmest}, we see that
\eqref{LinfOmest} follows immediately (for example) 
from \eqref{est:Om-complement}, \eqref{est:Om-collar} and
\eqref{sizes_on_collars}, while \eqref{extra_control_on_si} follows from \eqref{est:key-Omw} combined with \eqref{changeofbasis} and \eqref{cest}.
\end{proof}

\subsection{Projection of general quadratic differentials onto $\Hol$}

Based on the properties of holomorphic quadratic differentials derived in the previous section we can now prove:
\begin{prop}
\label{lemma:projection}
Given an oriented closed surface $M$ of genus $\ga\geq 2$, equipped with a hyperbolic metric $g$, there exists 
$C<\infty$ depending only on $\ga$ 
such that the projection $P_g(\Psi)$ of an arbitrary quadratic differential $\Psi\in \Qu_{L^2}(M,g)$ onto the space of holomorphic quadratic differentials satisfies
\beq \label{est:Linfty-Projection-wholesurface}
\norm{P_g(\Psi)}_{L^1(M,g)}\leq C\norm{\Psi}_{L^1(M,g)}.
\eeq
Moreover, on any collar $\Col= \Col(\ell)$ in $(M,g)$ with 
$\ell< 2\arsinh(1)$, we have
\beqs 
P_g(\Psi)\sim \frac{\ell^3}{32\pi^5}\langle\Psi,dz^2\rangle_{L^2(\Col,g)}dz^2\eeqs
in the sense that the principal part $b_0dz^2:=b_0(P_g(\Psi),\Col)dz^2$ of $P_g(\Psi)$ on $\Col$ satisfies
\beqs 
\label{est:principal-part-lemma}
\bigg|b_0-\frac{\ell^3}{32\pi^5}\langle\Psi,dz^2\rangle_{L^2(\Col)}\bigg|\leq C\cdot \ell^3\norm{\Psi}_{L^1(M,g)}\eeqs
while the remaining part decays rapidly along the collar, satisfying  
\beq \label{est:decay-part-lemma}
\norm{P_g(\Psi)-b_0dz^2}_{L^\infty(\delta\thin(\Col,g))}\leq C\delta^{-2}e^{-\pi/\delta}\norm{ \Psi}_{L^1(M,g)}  
\quad\forall
\delta\in(0,\arsinh(\cosh(\ell/2))).
\eeq
\end{prop}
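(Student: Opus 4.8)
The whole argument should run through the decomposition $\Hol(M,g)=W\oplus W^\perp$ from Section~\ref{fouriersect}, taken with respect to the collars $\Col^1=\Col(\ell^1),\dots,\Col^k=\Col(\ell^k)$ of $(M,g)$ whose central geodesics are short enough that Lemmas~\ref{W_structure} and \ref{Wnperp} and Corollary~\ref{W_proj_est} apply with constants depending only on $\ga$. A collar $\Col(\ell)$ occurring in the statement with $\ell$ above that genus-dependent threshold has uniformly controlled geometry, so in particular $\norm{dz^2}_{L^\infty(\Col(\ell))}\le C(\ga)$, and for it all three assertions follow routinely from \eqref{est:holo-thick} once \eqref{est:Linfty-Projection-wholesurface} is known; so I would reduce to the case $\Col=\Col^m$ for some $m$. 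Using the $L^2$-orthonormal basis $\{\Om^j\}_{j=1}^k$ of $W^\perp$ from Lemma~\ref{Wnperp}, I would write $P_g(\Psi)=P_g^W(\Psi)+\sum_{j=1}^k\langle\Psi,\Om^j\rangle_{L^2(M,g)}\,\Om^j$ and handle the two pieces separately. (Throughout, $O(\cdot)$ means up to a factor depending only on $\ga$.)

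\textbf{Step 1: the $L^1$ bound \eqref{est:Linfty-Projection-wholesurface}.} For the $W$-piece, Corollary~\ref{W_proj_est} gives $\norm{P_g^W(\Psi)}_{L^\infty(M,g)}\le C\norm{\Psi}_{L^1(M,g)}$, so integrating and using $\Vol(M,g)=4\pi(\ga-1)$ (Gauss--Bonnet) yields $\norm{P_g^W(\Psi)}_{L^1(M,g)}\le C(\ga)\norm{\Psi}_{L^1(M,g)}$. For the $W^\perp$-piece, $|\langle\Psi,\Om^j\rangle|\le\norm{\Psi}_{L^1(M,g)}\norm{\Om^j}_{L^\infty(M,g)}\le C(\ell^j)^{-1/2}\norm{\Psi}_{L^1(M,g)}$ by \eqref{LinfOmest}, while $\norm{\Om^j}_{L^1(M,g)}\le C(\ell^j)^{1/2}$ by \eqref{L1Omest}; hence each summand has $L^1$ norm $\le C\norm{\Psi}_{L^1(M,g)}$, and summing over the $k\le3(\ga-1)$ collars completes Step~1.

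\textbf{Step 2: the principal part.} Since $P_g^W(\Psi)\in W$ has vanishing principal part on $\Col^m$ by definition of $W$, expanding the $W^\perp$-part gives $b_0:=b_0(P_g(\Psi),\Col^m)=\langle\Psi,\Om^m\rangle_{L^2(M,g)}\,\beta^m+\sum_{j\ne m}\langle\Psi,\Om^j\rangle_{L^2(M,g)}\,b_0(\Om^j,\Col^m)$, with $\beta^m=b_0(\Om^m,\Col^m)$. The off-diagonal sum is $O((\ell^m)^3\norm{\Psi}_{L^1(M,g)})$, using $|\langle\Psi,\Om^j\rangle|\le C(\ell^j)^{-1/2}\norm{\Psi}_{L^1}$ from \eqref{LinfOmest}, $|b_0(\Om^j,\Col^m)|\le C(\ell^j)^{3/2}(\ell^m)^3$ from \eqref{est:princ_different_collar}, and $\sum_j\ell^j\le C(\ga)$. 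For the diagonal term I would split $M=\Col^m\cup(M\setminus\Col^m)$ and use $\norm{\Om^m-\beta^m\,dz^2}_{L^\infty(\Col^m)}\le C(\ell^m)^{3/2}$ and $\norm{\Om^m}_{L^\infty(M\setminus\Col^m)}\le C(\ell^m)^{3/2}$ from \eqref{est:Om-collar} and \eqref{est:Om-complement}, together with $\norm{dz^2}_{L^\infty(\Col^m)}\le C(\ell^m)^{-2}$, to get $\langle\Psi,\Om^m\rangle_{L^2(M,g)}=\beta^m\langle\Psi,dz^2\rangle_{L^2(\Col^m,g)}+O((\ell^m)^{3/2}\norm{\Psi}_{L^1})$. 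Combining this with $\beta^m=(32\pi^5)^{-1/2}(\ell^m)^{3/2}+O((\ell^m)^{9/2})$ (from \eqref{est:Om-collar}) and with $|\langle\Psi,dz^2\rangle_{L^2(\Col^m,g)}|\le C(\ell^m)^{-2}\norm{\Psi}_{L^1}$ then gives $b_0=\tfrac{(\ell^m)^3}{32\pi^5}\langle\Psi,dz^2\rangle_{L^2(\Col^m,g)}+O((\ell^m)^3\norm{\Psi}_{L^1(M,g)})$, which is the claim.

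\textbf{Step 3: the collar decay, and the main difficulty.} Here I would use that $P_g(\Psi)$ is itself a holomorphic quadratic differential, so its collar splitting on $\Col=\Col^m$ is exactly $P_g(\Psi)=b_0\,dz^2+\om^\perp(P_g(\Psi),\Col)$; then \eqref{est-collar-decay} applied to $P_g(\Psi)$ gives, for $\de\in(0,\de_0]$, $\norm{P_g(\Psi)-b_0\,dz^2}_{L^\infty(\de\thin(\Col,g))}\le C\de^{-2}e^{-\pi/\de}\norm{P_g(\Psi)}_{L^1(M,g)}$, and Step~1 replaces the right-hand side by $C\de^{-2}e^{-\pi/\de}\norm{\Psi}_{L^1(M,g)}$. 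For $\de\in(\de_0,\arsinh(\cosh(\ell/2)))$ --- a fixed bounded range, since $\ell<2\arsinh(1)$ forces $\cosh(\ell/2)<\sqrt2$ --- one has $\de\thin(\Col,g)\setminus\de_0\thin(\Col,g)\subset\de_0\thick(M,g)$, so \eqref{est:holo-thick} bounds $\norm{P_g(\Psi)}_{L^\infty}$ there, and also $\norm{b_0\,dz^2}_{L^\infty}$ (using $|b_0|\le C\ell\norm{P_g(\Psi)}_{L^1(\Col)}$ and $\norm{dz^2}_{L^\infty(\de_0\thick(\Col,g))}\le C$), each by $C\norm{\Psi}_{L^1(M,g)}$, which on this fixed $\de$-range is $\le C\de^{-2}e^{-\pi/\de}\norm{\Psi}_{L^1(M,g)}$; combining the two ranges gives \eqref{est:decay-part-lemma}. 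I expect the real work to be in Step~2, which has to balance the competing powers of the various $\ell^j$ and invokes essentially every refined bound of Lemma~\ref{Wnperp}; Steps~1 and 3 are comparatively soft, the only subtlety in Step~3 being that \eqref{est-collar-decay} as stated covers only $\de\le\de_0$.
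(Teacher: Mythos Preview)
Your computations in Steps 1--3 are exactly the ones the paper carries out, and the handling of the ``thick'' collar case and of the $\de>\de_0$ range in Step~3 matches the paper's argument almost verbatim. The gap is in how you invoke Lemmas~\ref{W_structure} and \ref{Wnperp} and Corollary~\ref{W_proj_est}: as stated in the paper, these are results about a \emph{sequence} $g_n$ of hyperbolic metrics, valid only after passing to a Deligne--Mumford subsequence and omitting finitely many terms, with constants that may depend on the sequence (for example, through the injectivity-radius lower bound on the complement of the degenerating collars; see the remark after \eqref{w_decay2}). They are not stated for a single metric $g$ with constants depending only on $\ga$, so your sentence ``whose central geodesics are short enough that Lemmas~\ref{W_structure} and \ref{Wnperp} and Corollary~\ref{W_proj_est} apply with constants depending only on $\ga$'' assumes something the paper has not proved.

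The paper resolves this by wrapping your direct argument in a contradiction: if \eqref{est:Linfty-Projection-wholesurface} (or the other two estimates) failed with constant depending only on $\ga$, one would get a sequence $(g_n,\Psi_n)$ violating it with constant $n$; passing to a Deligne--Mumford subsequence makes the sequence-based lemmas available, and then the very estimates you wrote down in Steps~1--2 (your \eqref{L1Omest}--\eqref{LinfOmest} bound on each $\langle\Psi_n,\Om_n^j\rangle\Om_n^j$, Corollary~\ref{W_proj_est} for the $W_n$-part, the splitting of $\langle\Psi_n,\Om_n^m\rangle$ via \eqref{est:Om-complement}--\eqref{est:Om-collar}, and \eqref{est:princ_different_collar} for the off-diagonal principal parts) produce a uniform bound, contradicting the assumption. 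So the content of your proof is correct; what is missing is precisely this contradiction/compactness wrapper, which is the mechanism by which the sequence-dependent constants of Section~\ref{fouriersect} get upgraded to genus-only constants.
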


The upper bound for $\de$ in \eqref{est:decay-part-lemma} could be taken to be any fixed number. The upper bound we chose is the injectivity radius at the ends of $\Col$ -- see \eqref{inj_at_end}.

\begin{proof}
Suppose, contrary to the proposition, that \eqref{est:Linfty-Projection-wholesurface} is false. Then there exists an  oriented closed surface $M$, a sequence of metrics $g_n\in \M$, and a sequence of elements $\Psi_n \in \Qu_{L^2}(M,g_n)$ such that
\beq
\label{to_contradict}
\norm{P_{g_n}(\Psi_n)}_{L^1(M,g_n)}> n\norm{\Psi_n}_{L^1(M,g_n)}.
\eeq
Lemmata \ref{W_structure} and \ref{Wnperp}
give us a subsequence and a decomposition 
$\Hol(M,g_n)=W_n\oplus W_n^\perp$
and allow us to write
\beqs
\label{eq:PnPsin}
P_{g_n}(\Psi_n):=w_n+\sum_{i=1}^k \langle\Psi_n,\Om_n^i\rangle\Om_n^i,\eeqs
where $w_n=P^{W_n}_{g_n}(\Psi_n)\in W_n$.
Corollary \ref{W_proj_est} and the fact that 
by the Gauss-Bonnet theorem
the area of $(M,g_n)$ is independent of $n$ gives
\beq 
\label{est:wn_est} 
\norm{w_n}_{L^1(M,g_n)}\leq C\norm{w_n}_{L^\infty(M,g_n)}\leq C\norm{\Psi_n}_{L^1(M,g_n)}.
\eeq
On the other hand, \eqref{L1Omest} and \eqref{LinfOmest} of Lemma \ref{Wnperp} allow us to estimate 
\beq
\label{L1omest}
\norm{\langle \Om_n^i,\Psi_n\rangle\Om_n^i}_{L^1(M,g_n)}
\leq 
\|\Psi_n\|_{L^1}
\|\Om_n^i\|_{L^\infty}\|\Om_n^i\|_{L^1}
\leq C\norm{\Psi_n}_{L^1(M,g_n)}.
\eeq
Combining, we find that 
$$\norm{P_{g_n}(\Psi_n)}_{L^1(M,g_n)}\leq C\norm{\Psi_n}_{L^1(M,g_n)},$$
which contradicts \eqref{to_contradict} and establishes
\eqref{est:Linfty-Projection-wholesurface}.

Next we turn to proving
\eqref{est:principal-part-lemma} and \eqref{est:decay-part-lemma}, but with the latter initially only required for $\de\in (0,\de_0]$, where $\de_0$ is as in \eqref{RTZLemma2.2}.
Now in order to argue by contradiction, we suppose instead that there exist
an oriented closed surface $M$ of genus $\ga\geq 2$, a sequence of metrics $g_n\in \M$, a sequence of elements 
$\Psi_n \in \Qu_{L^2}(M,g_n)$
and a sequence of collars $\Col_n=\Col(\ell_n)$ in $(M,g_n)$ 
with $\ell_n<2\arsinh(1)$, 
such that 
at least one of the estimates
\beq \label{est-tocontradict1}
\abs{b_0(P_{g_n}(\Psi_n),\Col_n)-\frac{\ell_n^3}{32\pi^5}\langle\Psi_n,dz^2\rangle_{L^2(\Col_n)}}\leq n\cdot \ell_n^3\norm{\Psi_n}_{L^1(M,g_n)}\eeq
or 
\beq \label{est-tocontradict2}
\norm{\om^\perp(P_{g_n}(\Psi_n),\Col_n) }_{L^\infty(\delta\thin(\Col_n))}\leq
n\norm{ \Psi_n}_{L^1(M,g_n)}\cdot \delta^{-2}e^{-\pi/\delta} \text{ for every } \delta\in (0,\de_0]\eeq 
is violated for each $n$. We will show that in fact, both are satisfied, for a subsequence,  even with the coefficients $n$ replaced with a large constant $C$.

As above, we appeal to Lemmata \ref{W_structure} and \ref{Wnperp} to give a subsequence and a decomposition as in \eqref{eq:PnPsin}, with $k$ (possibly zero) degenerating collars identified.
By passing to a further subsequence, we may further assume that either $\ell_n\to 0$ or $\ell_n$ has a positive lower bound, uniform in $n$.

Let us deal first with the harder case that $\ell_n\to 0$,
in which case we may assume that $\Col_n$ corresponds to the first of the $k$ degenerating collars, with corresponding basis element
$\Om_n^1\in W_n^\perp$.
The essential idea is that out of the $k+1$ terms in the 
decomposition \eqref{eq:PnPsin}, only the $\Om_n^1$ term will contribute substantially to the restriction of 
$P_{g_n}(\Psi_n)$ to the thin part of the collar $\Col_n$. 

Let us consider $w_n$ first.
Since it has vanishing principal part on each degenerating collar,
i.e. $w_n=\om^\perp(w_n)$, we can 
apply \eqref{est-collar-decay} and 
estimate as in \eqref{est:wn_est} to give
\beqa \label{est:col-decay-1}
\norm{w_n}_{L^\infty(\delta\thin(\Col_n))}&=\norm{ \om^\perp(w_n)}_{L^\infty(\delta\thin(\Col_n))}
\leq C\delta^{-2}e^{-\pi/\delta}\norm{w_n}_{L^1(M)}\\
&\leq C\delta^{-2}e^{-\pi/\delta}\norm{\Psi_n}_{L^1(M)} \text{ for all } \de\in (0,\de_0],
\eeqa
with $C$ independent of $n$. 
Here and in the following all norms are computed with respect to $g_n$ and we abbreviate
 $b_0(\cdot)=b_0(\cdot,\Col_n)$ and
$\om^\perp(\cdot)=\om^\perp(\cdot,\Col_n)$. 

To analyse $\langle\Psi_n,\Om_n^i\rangle\Om_n^i$ 
we first use \eqref{LinfOmest} to bound
\beq \abs{\langle\Psi_n,\Om_n^i\rangle_{L^2(M)}}\leq C\cdot (\ell_n^i)^{-1/2}\norm{\Psi_n}_{L^1(M)}.\label{est:innerproduct} \eeq
Recall that the collars $(\Col_n^i)_{i=1}^k$ are disjoint, so using \eqref{est:Om-complement} and the orthogonality of principal and collar decay part on subcollars,
we obtain that for  $i\neq 1$ 
$$ \norm{\om^\perp(\Om_n^i)}_{L^2(\de_0\thick(\Col_n))}\leq \norm{\Om_n^i}_{L^2(\de_0\thick(\Col_n))}\leq \norm{\Om_n^i}_{L^2(M\setminus \Col_n^i)}
\leq C(\ell_n^i)^{3/2},$$
(with $\de_0$ still that from \eqref{RTZLemma2.2})
which, combined with \eqref{RTZLemma2.2} and \eqref{est:innerproduct}, gives that for every $i\neq1$ and $\de\in (0,\de_0]$
\beq\label{est:col-decay-2}
\norm{\om^\perp(\langle\Psi_n,\Om_n^i\rangle\Om_n^i)}_{L^\infty(\de\thin(\Col_n))}
\leq 
C\ell_n^i\delta^{-2}e^{-\pi/\delta}\norm{\Psi_n}_{L^1(M)}, 
\eeq
again with $C$ independent of $n$. 
On the other hand, for $i=1$, a combination of \eqref{RTZLemma2.2} and \eqref{est:Om-collar} allows us to estimate
\begin{equation*}
\begin{aligned}
\|\om^\perp(\Om^1_n)\|_{L^\infty(\de\thin(\Col_n))}
&\leq
C\de^{-2}e^{-\pi/\de}
\|\om^\perp(\Om^1_n)\|_{L^2(\de_0\thick(\Col_n))}\\
&\leq
C\de^{-2}e^{-\pi/\de}
\|\om^\perp(\Om^1_n)\|_{L^\infty(\de_0\thick(\Col_n))}\\
&\leq
C\de^{-2}e^{-\pi/\de}\ell_n^{3/2}
\end{aligned}
\end{equation*}
for $\de\in (0,\de_0]$.
We can combine this with \eqref{est:innerproduct} 
to find that the collar decay part is small:
\begin{equation*}
\norm{\om^\perp(\langle\Psi_n,\Om_n^1\rangle\Om_n^1)}_{L^\infty(\de\thin(\Col_n))}
\leq C\ell_n\delta^{-2}e^{-\pi/\delta}\norm{\Psi_n}_{L^1(M)} \text{ for all } \de\in (0,\de_0],
\end{equation*}
$C$ independent of $n$, which in view of  \eqref{est:col-decay-1} and \eqref{est:col-decay-2} means that \eqref{est-tocontradict2} is fulfilled for all $n$ sufficiently large. 

We then note that estimate \eqref{est:princ_different_collar} implies that also the contribution of the $\Om_n^i$ 
to the principal part of 
$P_{g_n}(\Psi_n)$
on $\Col_n$ is small if $i\neq 1$, namely using once more \eqref{est:innerproduct}, we get
\beq\label{est:princ-1}
\abs{b_0(\langle\Psi_n,\Om_n^i\rangle\Om_n^i)}\leq C\ell_n^i (\ell_n)^{3}\norm{\Psi_n}_{L^1(M)}\leq C(\ell_n)^{3}\norm{\Psi_n}_{L^1(M)}.
\eeq

In order to evaluate the principal part of the dominating term $\langle\Psi_n,\Om_n^1\rangle\Om_n^1$, we first use 
\eqref{est:Om-complement} and \eqref{est:Om-collar} from Lemma \ref{Wnperp}, and abbreviate $\al:=1/(32\pi^5)^{1/2}$
to estimate
\beqa
\label{peach2}
\bigg|\langle \Psi_n,\Om_n^1 \rangle_{L^2(M)}-
\al\ell_n^{3/2}\langle \Psi_n,dz^2 \rangle_{L^2(\Col_n)}\bigg|
&\leq \abs{\langle \Psi_n,\Om_n^1 \rangle_{L^2(M\backslash\Col_n)}}
\\ & \quad +
\bigg|\langle \Psi_n,\Om_n^1-b_0(\Om_n^1)dz^2\rangle_{L^2(\Col_n)}\bigg|\\
&\quad +\abs{b_0(\Om_n^1)-\al\ell_n^{3/2}}\cdot \abs{\langle \Psi_n, dz^2 \rangle_{L^2(\Col_n)}}
\\
&\leq C\ell_n^{3/2}\|\Psi_n\|_{L^1(M)},
\eeqa
using \eqref{sizes_on_collars}.

Combined with 
\eqref{est:Om-collar} and \eqref{est:innerproduct}, 
estimate \eqref{peach2} implies that 
\beqa \label{est:tobeimprovedfordz2}
\lefteqn{\abs{b_0(\langle \Psi_n,\Om_n^1 \rangle\Om_n^1)-\al^2\ell_n^{3}\langle \Psi_n,dz^2 \rangle_{L^2(\Col_n)}}}\qquad\\
&\leq
\left|\langle \Psi_n,\Om_n^1 \rangle (b_0(\Om_n^1)-\al\ell_n^{3/2})\right|+\left|\al\ell_n^{3/2}\langle \Psi_n,\Om_n^1 \rangle 
-\al^2\ell_n^{3}\langle \Psi_n,dz^2 \rangle_{L^2(\Col_n)}\right|\\
&\leq C\ell_n^{-1/2}\|\Psi_n\|_{L^1(M)}
\ell_n^{9/2}+\al\ell_n^{3/2} C\ell_n^{3/2}\|\Psi\|_{L^1(M)}\\
&\leq C\ell_n^3 \norm{\Psi_n}_{L^1(M)}.
\eeqa
Since any other contribution to the principal part of $P_{g_n}\Psi_n$ is bounded 
by \eqref{est:princ-1} this implies that also \eqref{est-tocontradict1} is fulfilled for all sufficiently large $n$, leading to a contradiction to our assumption in the case $\ell_n\to 0$.

Next we need to deal with the easier case that $\ell_n$ has a positive lower bound, independent of $n$, and thus the injectivity radius on $(\Col_n,g_n)$ has a positive lower bound.
In this case, when we decompose $P_{g_n}(\Psi_n)$ as in 
\eqref{eq:PnPsin}, the collar $\Col_n$ will be disjoint from the $k$ degenerating collars. 
We can argue just as above in order to establish
\eqref{est:col-decay-1} for $\om^\perp(w_n)$ and \eqref{est:col-decay-2}, but this time the latter holds also for $i=1$.
In this simpler case, those two estimates are already enough to establish \eqref{est-tocontradict2} for sufficiently large $n$, by arguing as above.

In this case that $\ell_n$ (and thus the injectivity radius on $\Col_n$) has a positive lower bound, establishing \eqref{est-tocontradict1} is simply a matter of estimating the two terms on the left-hand side \emph{individually} -- it is not just the difference that is controlled.
To estimate $b_0:=b_0(P_{g_n}(\Psi_n),\Col_n)$, we note that
$$\|b_0 dz^2\|_{L^2(\Col_n)}\leq
\|P_{g_n}(\Psi_n)\|_{L^2(\Col_n)}\leq
C\|P_{g_n}(\Psi_n)\|_{L^1(M)}
\leq
C\|\Psi_n\|_{L^1(M)},$$
by \eqref{est:holo-thick} and the first part \eqref{est:Linfty-Projection-wholesurface} of the proposition.
But it is easy to see that $\|dz^2\|_{L^2(\Col_n)}$ has a uniform lower bound -- for example, by Cauchy-Schwarz, it can be controlled from below in terms of 
the (bounded) area of $(\Col_n,g_n)$ and 
$$\|dz^2\|_{L^1(\Col_n)}
=8\pi X(\ell_n)\geq \frac{2\pi^3}{\arsinh(1)}
$$
(see 
\eqref{X_lower_bd}
and \eqref{sizes_on_collars}). 
Therefore we have
\beq
\label{b0est}
|b_0|\leq C\|b_0 dz^2\|_{L^2(\Col_n)}\leq
C\|\Psi_n\|_{L^1(M)}.
\eeq
Meanwhile we can directly estimate the other term of 
\eqref{est-tocontradict1} by
$$\bigg|\frac{\ell_n^3}{32\pi^5}\langle\Psi_n,dz^2\rangle_{L^2(\Col_n)}\bigg|
\leq 
C \ell_n^3\|\Psi_n\|_{L^1(M)}
\|dz^2\|_{L^\infty(\Col_n)},$$
and by \eqref{sizes_on_collars} (and the boundedness of $\ell_n$)
we deduce
$$\bigg|\frac{\ell_n^3}{32\pi^5}\langle \Psi_n,dz^2\rangle_{L^2(\Col_n)}\bigg|
\leq 
C \|\Psi_n\|_{L^1(M)}.$$
Combining with \eqref{b0est}, and keeping in mind the uniform positive lower bound for $\ell_n$, we deduce that 
\eqref{est-tocontradict1} holds for sufficiently large $n$.

At this point, we have succeeded in proving 
\eqref{est:principal-part-lemma} and \eqref{est:decay-part-lemma}, but with the latter only required for $\de\in (0,\de_0]$.
To establish the same claim for the full range 
$\delta\in(0,\arsinh(\cosh(\ell/2)))$,
it suffices to observe additionally that by \eqref{dethickest} and by \eqref{est:Linfty-Projection-wholesurface} 
$$
\norm{P_g(\Psi)-b_0dz^2}_{L^\infty(\de_0\thick(\Col,g))}\leq 
C\norm{P_g(\Psi)}_{L^1(M,g)}\leq 
C\norm{ \Psi}_{L^1(M,g)}.
$$
\end{proof}

\subsection{Proof of the general formula for $\frac{d\ell}{dt}$, Lemma \ref{lengthevol}}
\label{are_we_done}
We can now prove Lemma \ref{lengthevol} based on the formula for the projection derived in the previous section. 

In keeping with Lemma \ref{lengthevol}, throughout this section we assume that 
$M$ is an oriented closed surface of genus $\gamma\geq 2$ and $g(t)$ is a smooth 
one-parameter family of metrics in $\M$ such that 
$\partial_tg\vert_{t=0}=Re(P_g(\Psi))$ for some $\Psi\in \Qu_{L^2}(M,g(0))$,
and we assume that $(M,g(0))$ contains a collar $ \Col$ around a simple closed geodesic $\si$ of length $\ell<2\arsinh(1)$.
\brmk
\label{regularity_rmk}
As $t$ varies near $0$, the locally minimising geodesic $\si$ will vary as a one-parameter family of simple closed geodesics $\si(t)$ of length $\ell(t)$ with respect to $g(t)$. This family will be continuous with respect to (say) the $C^1$ topology. As one would expect, given that $\si$ is a geodesic, we claim that
$$\frac{d}{dt}\bigg|_{t=0}\ell(t)=\frac{d}{dt}\bigg|_{t=0}L_{g(t)}(\si).$$
To see this, note that for each $s$, the function
$$t\mapsto L_{g(t)}(\si(s))$$
is smooth, and lies above the function $t\mapsto \ell(t):=L_{g(t)}(\si(t))$ 
(with equality at $t=s$)
because $\si(t)$ minimises the length in $(M,g(t))$ over all nearby simple closed curves.
But by the continuity of $\si(t)$ in $C^1$, we see that 
$$s\mapsto \frac{d}{dt}\bigg|_{t=s}L_{g(t)}(\si(s))$$
is continuous, 
which is enough to conclude.
\ermk

\begin{proof}[Proof of Lemma \ref{lengthevol}]
At $t=0$, writing $g_{\th\th}=g\left(\pl{}{\th},\pl{}{\th}\right)$, we have 
$$\ell=\int_{\si}g_{\th\th}^{1/2}d\th=2\pi g_{\th\th}^{1/2},$$
i.e. $g_{\th\th}=\left(\frac{\ell}{2\pi}\right)^2$.
By Remark \ref{regularity_rmk}, we have at $t=0$
\beqas
\label{kangaroo}
\frac{d\ell}{dt}&=\frac{d}{dt}L_{g(t)}(\si)\\
&=\int_{\si}\left(\half \left[g_{\th\th}\right]^{-1/2}\pl{g_{\th\th}}{t}\right)d\th=\frac{\pi}{\ell}\int_{\si}\pl{g_{\th\th}}{t}d\th
=\frac{\pi}{\ell}\int_{\si}Re(P_g(\Psi))
{\textstyle \left(\pl{}{\th},\pl{}{\th}\right)}
d\th\\
&=\frac{\pi}{\ell}\int_{\si}Re(b_0(P_g(\Psi))dz^2){\textstyle \left(\pl{}{\th},\pl{}{\th}\right)}+Re(\om^\perp(P_g(\Psi))){\textstyle \left(\pl{}{\th},\pl{}{\th}\right)} d\theta\\
&=-\frac{2\pi^2}{\ell}Re(b_0(P_g(\Psi))),
\eeqas
where we split $P_g(\Psi)=b_0(P_g(\Psi))dz^2+\om^\perp(P_g(\Psi))$ into its principal part
and its collar decay part, and notice that the latter integrates to zero by \eqref{Fourier_decomp}.
 
Proposition \ref{lemma:projection} tells us that
$$\bigg|Re(b_0(P_g(\Psi)))-\frac{\ell^3}{32\pi^5}Re\langle\Psi, dz^2\rangle_{L^2(\Col)}\bigg|\leq C\ell^3\norm{\Psi}_{L^1},$$
so indeed, by \eqref{kangaroo},
$$\bigg|\frac{d\ell}{dt}+\frac{\ell^2}{16\pi^3}Re\langle\Psi, dz^2\rangle_{L^2(\Col)}\bigg|\leq C(\ell^{-1}\ell^3 
)\norm{\Psi}_{L^1}\leq  C\ell^2\norm{\Psi}_{L^1}.$$
\end{proof}

\begin{rmk}
\label{simple_formula_holo_case}
From the proof we immediately see (as is already well-known, e.g. \cite[Theorem 3.3]{Wolpert07}) that for \textit{holomorphic} quadratic differentials $\Psi$, we have
\beq \label{est:ddtl_holomorphic} \frac{d\ell}{dt}=-\frac{2\pi^2}{\ell}Re(b_0(\Psi)).
\eeq
\end{rmk}

\subsection{Incompleteness of Teichm\"uller space}
\label{TeichIncomp}

In order to illustrate the use of Lemma \ref{lengthevol}, we show the well-known fact (e.g. Wolpert \cite{wolpert75}) that Teichm\"uller space equipped with the Weil-Petersson metric is incomplete. Indeed, if we pick a metric $g$ on any oriented closed surface $M$ of genus at least $2$, with a collar $\Col$ having 
$\ell<2\arsinh(1)$, then we may deform it as in Lemma \ref{lengthevol} taking $\Psi$ to be $dz^2$ on $\Col$ and zero elsewhere. In this case, the distance $s(t)$ travelled through Teichm\"uller space is, by definition (with one choice of normalisation of the Weil-Petersson metric)
$$\frac{ds}{dt}=\frac14\|P_g(\Psi)\|_{L^2(M)}\leq \frac14\|\Psi\|_{L^2(M)}
= \frac14\|dz^2\|_{L^2(\Col)}=\left(\frac{2\pi^5}{\ell^3}\right)^{1/2}
(1+O(\ell^3)),$$
as a result of \eqref{sizes_on_collars}.
Meanwhile, by Lemma \ref{lengthevol}, we have

$$\bigg|\frac{d\ell}{dt}+\frac{\ell^2}{16\pi^3}
\|dz^2\|^2_{L^2(\Col)}\bigg|
\leq C\ell^2\|dz^2\|_{L^1(\Col)},$$
and hence, by \eqref{sizes_on_collars} 
$$\bigg|\frac{d\ell}{dt}
+\frac{2\pi^2}{\ell}
\bigg|
\leq C\ell.$$
Combining these facts, we find that
$$\frac{d\ell^{1/2}}{ds}\leq-\left(\frac{1}{2\pi}\right)^{1/2}
+O(\ell^2),$$
and thus we can pinch a collar by moving a distance no more
than $(2\pi\ell)^{1/2}+O(\ell^{5/2})$ in Teichm\"uller space. 
We do not claim this to be optimal in any way; already our results
would allow us to show a stronger upper bound of $(2\pi\ell)^{1/2}+O(\ell^{7/2})$, but indeed it was proven 
by Wolpert in \cite{Wolpert07} that $\ell^\half$ is convex and consequently this distance is bounded from above by $(2\pi\ell)^{1/2}$ itself, with a lower bound of  
$dist\geq (2 \pi\ell)^{1/2}+O(\ell^{5/2})$ established in the same paper.

Remark that combining the (essentially explicit) upper bound 
\eqref{est:apriori-upper-bound-b0}
on the principal part of any unit holomorphic quadratic differential with \eqref{est:ddtl_holomorphic}  allows us to improve this lower bound to an estimate of the form
\beq \label{est:lower}
\dist\geq (2\pi\ell)^{1/2}\left(1-\frac1{84\pi}\ell^3+O(\ell^5)\right).
\eeq

\newcommand{\J}{{\mathcal{J}}}
In the more general case that a collection $\{\si_i\}_{i\in\J}$
of geodesics pinches, i.e. in which one considers the distance to the 
part (or stratum) of the boundary characterised by 
$\ell :=\sum_{i\in\J}\ell_i=0$, the lower bound in the estimate 
$(2\pi\ell)^{1/2}+O(\ell^{5/2})\leq dist \leq (2\pi\ell)^{1/2}$
proven in \cite{Wolpert07} can be improved to 
\eqref{est:lower}
by a similar argument, using additionally that the degenerating collars are disjoint.

\section{Controlling the weighted energy $I$}\label{sec:I}
In this section, 
we finally prove the estimate on the full weighted energy 
$$I=\int_\Col e(u,g)\rho^{-2}dv_g$$ 
that we claimed in Lemma \ref{lemma:Icontrol}. 
Let $(u,g)$ be any solution of the Teichm\"uller harmonic map flow \eqref{flow} defined on an interval $[0,T)$ and let $t_0\in[0,T)$ be such that $(M,g(t_0))$ contains a collar $\Col_{t_0}$ around 
a simple closed geodesic $\si(t_0)$ of length 
$\ell(t_0)<2\arsinh(1)$. 
As in Section \ref{are_we_done}, for $t$ close to $t_0$, this geodesic will vary continuously 
through a family of simple closed geodesics $\si(t)$
in $(M,g(t))$, each of which is at the centre of a collar $\Col_t$.
Every closed subset of $\Col_{t_0}$
will also be contained in  $\Col_{t}$ for $t$ sufficiently close to $t_0$.

We may thus consider the evolution of the associated weighted energies $I$, or rather of a smoothed-out version of $I$ given by 
\beq \label{I-zeppelin} 
\I=\I(u(t),g(t))=\int_{\Col_t} e(u(t),g(t))\rho^{-2}(t)\vph^2(\rho(t))dv_{g(t)},\eeq
$\vph\in C_0^\infty([0,2\de),[0,1])$ a cut-off function with $\vph\equiv 1$ on $[0,\de]$ and 
$|\vph'|\leq 2/\delta$, where we require $\de>0$ small enough such that 
$2\de \leq \rho(X(\ell))$ for all $\ell\in(0,2\arsinh(1))$.
Indeed, by \eqref{rho_range}, we can fix $\de=\frac{1}{2\pi}$, which relieves any dependencies of constants on $\de$.
Note that $I$ and $\I$ are related in the sense that
\beq
\label{I_I_est}
0\leq I-\I\leq \de^{-2}E_0,
\eeq
where $E_0$ is an upper bound on the total energy.

The main step in the proof of Lemma \ref{lemma:Icontrol} is to show
\begin{lemma}\label{lemma:I-zeppelin}
 Let $(u,g)$ be a solution of \eqref{flow} on an oriented closed 
 surface $M$ of genus at least 2, for $t\in [0,T)$, into a target $N$ that supports no bubbles.
Then at any time $t\in [0,T)$ at which $(M,g(t))$ contains a collar $\Col=\Col(\ell)$ with 
$\ell<2\arsinh(1)$, the corresponding weighted energy $\I$ defined by 
 \eqref{I-zeppelin} satisfies
 \beq \label{est:I-zeppelin}
 \bigg|\ddt \log(1+\I)\bigg| \leq C\left(1+\norm{\tau_g(u)}_{L^2(M,g)}^2\right)\eeq
 for a constant $C$ depending only on $M$, $N$, $\eta$ and an upper bound $E_0$ on the initial energy.
\end{lemma}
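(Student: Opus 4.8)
The plan is to differentiate $1+\I$ in $t$ and prove the equivalent bound $|\ddt\I|\le C(1+\I)(1+\norm{\tau_g(u)}_{L^2(M,g)}^2)$, from which \eqref{est:I-zeppelin} follows on dividing by $1+\I\ge1$. As in Remark \ref{regularity_rmk}, for $t$ near $t_0$ the central geodesic, the collar $\Col_t$, the collar coordinates $(s,\theta)$ and the conformal factor $\rho=\rho(s)$ (which depends on $t$ only through $\ell=\ell(t)$) vary $C^1$-smoothly in $t$, so one may differentiate under the integral sign. Writing $W:=\rho^{-2}\varphi^2(\rho)$, a function of $s$ alone supported in the $2\de\thin$ part of $\Col_t$ and flat near $\partial\Col_t$, conformal invariance of the energy density gives, in collar coordinates, $\I=\int\tfrac12(|u_s|^2+|u_\theta|^2)\,W\,ds\,d\theta$. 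Hence $\ddt\I$ is the sum of a contribution of the harmonic map flow $\pl ut=\tau_g(u)$, a contribution of the metric flow $\pl gt=\tfrac{\eta^2}4Re(P_g(\Phi(u,g)))$ — which enters through $\rho$ (hence through $W$) and through the metric-dependence of $e(u,g)$, the volume form being unchanged since $\pl gt$ is trace-free — and a contribution of the drift of the collar coordinates.

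For the harmonic map flow part I would integrate by parts on $(\Col_t,g)$ — there is no boundary term, $W$ being flat near $\partial\Col_t$ — and use $\langle\tau_g(u),\Delta_g u\rangle=|\tau_g(u)|^2$ to obtain a favourable term $-\int_{\Col_t}W|\tau_g(u)|^2\,dv_g\le0$ (discarded for the upper bound, retained for the lower bound) plus a cross term $-\int_{\Col_t}\langle\nabla W,\nabla u\rangle_g\cdot\tau_g(u)\,dv_g$. Controlling this cross term, and later the metric-flow terms, rests on the precise form of $\rho$ from the collar lemma — the identity $\rho'/\rho^2=\sin(\tfrac{s\ell}{2\pi})$ and its consequences, which give $|\partial_sW|\le C\rho^{-2}$, $|\partial_s^2W|\le C$ and $|\Delta_gW|\le C\rho^{-2}$ on the support of $W$; this sharpness is essential, as naive estimates would lose powers of $\ell^{-1}$. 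One also uses \eqref{I_I_est} to pass between $\I$ and the uncut weighted energy $I$.

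For the metric flow part I would use Proposition \ref{lemma:projection} to replace $P_g(\Phi(u,g))$ on $\Col_t$ by its principal part $b_0\,dz^2$, with $b_0\sim\tfrac{\ell^3}{32\pi^5}\langle\Phi(u,g),dz^2\rangle_{L^2(\Col_t)}$ and a remainder exponentially small on the thin part and, in $L^1$, controlled by $\norm{\Phi(u,g)}_{L^1}\le4E_0$; and Lemma \ref{thmf_length_evol} to write $\dot\ell$, hence the evolution of $\rho^{-2}$ (which is $-\tfrac{2\dot\ell}\ell\rho^{-2}$ up to factors kept bounded on the support of $W$ by the same trigonometric identities), in terms of $\int_{\Col_t}(|u_s|^2-|u_\theta|^2)\rho^{-2}\,ds\,d\theta$. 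Substituting, the dominant contribution of the evolution of $W$ to $\ddt\I$ has the form $\sim\bigl(\int_{\Col_t}(|u_s|^2-|u_\theta|^2)\rho^{-2}\,ds\,d\theta\bigr)\cdot\I$, which naively is of size $\I^2$; its angular part $\sim-\bigl(\int_{\Col_t}|u_\theta|^2\rho^{-2}\bigr)\I=-I^{(\th)}\I$ is acceptable, and the crucial point — where the metric-dependence of $e(u,g)$ is brought in, using that on the thin part $P_g\Phi$ is to high precision the \emph{constant} $b_0\,dz^2$ with $b_0$ itself proportional to this same radial-minus-angular integral — is that the radial part contributes only at the admissible size $C(1+\I)(1+\norm{\tau_g(u)}_{L^2(M,g)}^2)$. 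Granting this, $\ddt\I\le C\,\I\,I^{(\th)}+C(1+\I)(1+\norm{\tau_g(u)}_{L^2(M,g)}^2)$, and Lemma \ref{ang_en_est} gives $I^{(\th)}\le C(1+\norm{\tau_g(u)}_{L^2(M,g)}^2)$, so that
\[
\ddt\log(1+\I)=\frac{\ddt\I}{1+\I}\le\frac\I{1+\I}\,I^{(\th)}+C\bigl(1+\norm{\tau_g(u)}_{L^2(M,g)}^2\bigr)\le C\bigl(1+\norm{\tau_g(u)}_{L^2(M,g)}^2\bigr);
\]
the lower bound follows by running the same estimates and keeping the favourable term.

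The main obstacle is exactly this cancellation turning the naive $\I^2$-growth into $\I\cdot I^{(\th)}$: the radial part of the weight-evolution term and the $e(u,g)$-contribution are each individually of genuine size $\I^2$, so only an exact matching of their leading parts — made possible by the near-explicit description of $P_g\Phi$ on the thin part from Proposition \ref{lemma:projection}, and by the fact that $b_0$ is $\tfrac{\ell^3}{32\pi^5}$ times the radial-minus-angular integral driving $\dot\ell$ — can work. Executing it forces one to propagate every error term — the collar-decay remainder and the $O(\ell^3)$ correction to $b_0$, the chart-drift terms, and the various weighted second-derivative contributions — through the whole computation and verify each stays of the admissible size; and keeping $W$, $\partial_sW$, $\Delta_gW$ at the sharp order $\rho^{-2}$ throughout, via the trigonometric structure of the hyperbolic collar, is indispensable.
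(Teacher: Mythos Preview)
Your overall architecture is right: you correctly isolate the danger as a term of naive order $\I^2$, you see that the surviving contribution should carry only the angular energy so that Lemma~\ref{ang_en_est} closes the estimate, and you see that Proposition~\ref{lemma:projection} is what makes $P_g\Phi$ essentially constant on the thin part. But the mechanism you describe for the cancellation is not the one the paper uses, and your substitute has a real gap.

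The paper never computes $\partial_t(\rho^{-2})$ via $\dot\ell$ in moving collar coordinates. Instead it proves a separate result, Lemma~\ref{lemma:rho}, giving the \emph{coordinate-free} formula $\partial_t(\rho^2)(p)\approx -Re(b_0)$ at a fixed point $p$ (via the characterisation $\rho=\rho(\ell,\inj_g(p))$ from \eqref{eq:rho}). One then works in the \emph{fixed} collar coordinates of $(\Col_{t_0},g(t_0))$, so there is no chart drift at all. The cancellation in the dangerous radial term $T_1=\tfrac12\int\partial_t(g^{ss}\rho^{-2})|u_s|^2\varphi^2\,dv_g$ is then purely pointwise: since $(\partial_t g)_{ss}=Re(b_0)+(Re\,\omega^\perp)_{ss}$ and $\partial_t(\rho^2)\approx -Re(b_0)$, one gets $|\partial_t(g^{ss}\rho^{-2})|\le C\rho^{-6}e^{-1/\rho}$, hence $T_1\le C$. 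The specific value of $b_0$ never enters the cancellation; only the angular term $T_3$ and the cross term $T_2$ retain $Re(b_0)$ and $Im(b_0)$, and there the bounds \eqref{est:a}--\eqref{est:b} produce the factor $\ell I^{(\theta)}\I$.

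Your route instead fixes the moving collar coordinate $s$, writes $\partial_t(\rho^{-2})\sim -\tfrac{2\dot\ell}{\ell}\rho^{-2}$, and then expands $\dot\ell$ via Lemma~\ref{thmf_length_evol}. There are two problems. First, the formula $\partial_t(\rho^{-2})\sim -\tfrac{2\dot\ell}{\ell}\rho^{-2}$ has the wrong $s$-dependence: the correct pointwise statement from Lemma~\ref{lemma:rho} is $\partial_t(\rho^{-2})\approx \rho^{-4}Re(b_0)$, which scales like $\rho^{-4}$, not $\rho^{-2}$; these agree only at $s=0$. Second, and relatedly, you acknowledge the chart-drift contribution but never estimate it, and in your setup it is not lower-order: it is precisely what reconciles your $\rho^{-2}$ scaling with the correct $\rho^{-4}$ one, so it must be computed to the same precision as everything else. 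Without Lemma~\ref{lemma:rho} (or an equivalent computation of $\partial_t\inj_g(p)$, which is how that lemma is proved) you cannot execute the pointwise cancellation, and trying to match the two individually $O(\I^2)$ integrals globally via the explicit form of $b_0$ will not give an exact match.
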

Accepting this lemma for the moment, we can finally give the:
\begin{proof}[Proof of Lemma  \ref{lemma:Icontrol}]
Given $(u,g)$ as in Lemma \ref{lemma:Icontrol} and a time $t_0\in [0,T)$ such that $(M,g(t_0))$ contains a collar around a geodesic $\si(t_0)$
of length $L_{g(t_0)}(\si(t_0))<
2\arsinh(1)
$ we let $t_{min}\geq 0$ be the minimal number
such that there is a continuous family of simple closed geodesics $(\si(t))_{t\in[t_{min},t_0]}$ in $(M,g(t))$, as in Section \ref{are_we_done}, 
with 
$L_{g(t)}(\si(t))<2\arsinh(1)
$ for all $t\in(t_{min},t_0]$.

If $t_{min}=0$, we can initially bound the weighted energy $\I$ in terms of $E(u(0),g(0))\leq E_0$ and $\ell_0=L_{g(0)}(\si(0))\geq 2\inj_{g(0)}M$ as
$$\I(0)\leq \big(\sup_{\Col}\rho^{-2}\big)\int_{\Col} e(u,g)dv_g\bigg\vert_{t=0}\leq \left(\frac{2\pi}{\ell_0}\right)^2E(u(0),g(0))\leq C \cdot(\inj_{g(0)}M)^{-2},$$
with $C=C(E_0)$. Since the space-time integral of the squared tension is bounded by the initial energy, by \eqref{energy-identity}, 
integration of \eqref{est:I-zeppelin} from $t=0$ to $t_0$ gives the desired estimate
\beq 
\I(t_0)\leq \exp\left[C\int_0^{t_0}\left(1+\norm{\tau_g(u)}_{L^2(M,g)}^2\right)dt\right] \cdot (\I(0)+1) \leq e^{C(t_0+1)}\cdot (1+(\inj_{g(0)}M)^{-2})\eeq
first for $\I$, and then, by \eqref{I_I_est}, 
also for the original weighted energy $I$.

On the other hand, if $t_{min}>0$ then $L_{g(t_{min})}(\si(t_{min}))=
2\arsinh(1)
$ and thus $\I(t_{min})\leq C
E_0$ so integration of \eqref{est:I-zeppelin} from 
$t_{min}$ to $t_0$ again proves Lemma \ref{lemma:Icontrol}. 
\end{proof}

We now turn to the proof of Lemma \ref{lemma:I-zeppelin}. 
\newcommand{\ddeps}{\left.\frac{d}{d\eps}\right\vert_{\eps=0}}
To begin with, we derive a formula for the evolution of the 
conformal factor $\rho$ along certain curves of hyperbolic metrics.
%
We recall, by \eqref{eq:rho-inj}, that the conformal factor $\rho$ on a collar $\Col=\Col(\ell(t))\subset (M,g(t))$ 
can be characterised in a coordinate-free way as 
\beq \label{eq:rho} 
\rho(p,t)=\frac{\ell(t)}{2\pi\sinh(\ell(t)/2)}\cdot \sinh\left(\inj_{g(t)}(p)\right). \eeq

\begin{lemma}\label{lemma:rho}
Let $(g(t))_{t\in[0,T)}$ be a smooth curve of hyperbolic metrics on an oriented closed surface $M$ such that
$$\pt g=Re(P_g(\Psi(t))) \text{ for some } \Psi(t)\in \Qu_{L^2}(M,g(t)),$$
and assume that at some time $t_0\in [0,T)$, the surface $(M,g(t_0))$ contains a collar $\Col$ around a geodesic of length 
$\ell<2\arsinh(1)$.
Then $\rho(p,t)=\rho(\ell(t),\inj_{g(t)}(p))$ evolves according to 
\beqs 
\label{target}
\abs{\pt (\rho^2)(p)+Re(b_0)}\leq C\cdot e^{-1/\rho(p)}\norm{\Psi}_{L^1(M,g)}\quad\text{ for all } p\in \Col
\eeqs
at time $t_0$, where $b_0dz^2=b_0(P_g(\Psi),\Col)dz^2$ is the principal part 
on $\Col$, and $C<\infty$
depends only on the genus of $M$.
\end{lemma}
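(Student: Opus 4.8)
The plan is to differentiate the explicit formula \eqref{eq:rho} in time. Writing $\rho^2(p,t) = \left(\tfrac{\ell(t)}{2\pi\sinh(\ell(t)/2)}\right)^2 \sinh^2(\inj_{g(t)}(p))$, we see $\pt(\rho^2)$ is a sum of two contributions: one from the evolution of $\ell(t)$ through the prefactor, and one from the evolution of the injectivity radius $\inj_{g(t)}(p)$. For the first, Lemma \ref{lengthevol} (applied at $t_0$ with the given $\Psi$) gives $\tfrac{d\ell}{dt} = -\tfrac{\ell^2}{16\pi^3}Re\langle\Psi,dz^2\rangle_{L^2(\Col,g)} + O(\ell^2\|\Psi\|_{L^1})$, and Proposition \ref{lemma:projection} identifies $Re\langle\Psi,dz^2\rangle_{L^2(\Col)}$ with $\tfrac{32\pi^5}{\ell^3}Re(b_0)$ up to an error $O(\|\Psi\|_{L^1})$; so $\tfrac{d\ell}{dt} = -\tfrac{2\pi^2}{\ell}Re(b_0) + O(\ell^2\|\Psi\|_{L^1})$. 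Differentiating the prefactor $\left(\tfrac{\ell}{2\pi\sinh(\ell/2)}\right)^2$ in $\ell$ and multiplying by $\sinh^2(\inj_g(p))$ then yields a term of the shape $(\text{bounded function of }\ell)\cdot Re(b_0)\cdot\sinh^2(\inj_g(p))/\ell$ plus a harmless $O(\sinh^2(\inj_g(p))\,\ell\|\Psi\|_{L^1})$ error.

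For the second contribution I would use a coordinate-free description of the metric on the collar and the characterisation of $\inj_g$ there. In collar coordinates $(s,\th)$, with $\rho$ as in Lemma \ref{lemma:collar}, the perturbation $\pt g = Re(P_g(\Psi))$ decomposes on $\Col$ into its principal part $Re(b_0 dz^2)$ and the collar-decay part $Re(\om^\perp(P_g(\Psi)))$. The principal part $Re(b_0(ds^2-d\th^2)-\ldots)$ is (up to the Lie-derivative/reparametrisation ambiguity that does not affect $\inj_g$) exactly the infinitesimal deformation that rescales the collar while keeping it a standard hyperbolic collar, i.e. it is tangent to the one-parameter family of model collars and its effect on $\inj_g(p)$ is already accounted for by the change in $\ell$; carrying this out carefully, the net effect of $\pt(\rho^2)$ from principal part plus length change should combine to precisely $-Re(b_0)$. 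The decay part $\om^\perp$ contributes the error: by \eqref{est:decay-part-lemma} of Proposition \ref{lemma:projection} (applicable since $\rho(p)\leq\rho(X(\ell)) = \tfrac{1}{2\pi}<\arsinh(\cosh(\ell/2))$, so every $p\in\Col$ lies in the $\de$-thin part for $\de = \arsinh(\rho(p)\cdot\tfrac{2\pi\sinh(\ell/2)}{\ell})\sim\inj_g(p)\sim\rho(p)$), we have $\|\om^\perp(P_g(\Psi))\|_{L^\infty}\leq C\de^{-2}e^{-\pi/\de}\|\Psi\|_{L^1}$ at $p$; since $\de$ is comparable to $\rho(p)$ this is $\leq C\rho(p)^{-2}e^{-c/\rho(p)}\|\Psi\|_{L^1}\leq C e^{-1/\rho(p)}\|\Psi\|_{L^1}$, absorbing the polynomial factor into the exponential (and $\sinh^2(\inj_g(p))\sim\rho(p)^2$ only helps).

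The main obstacle I anticipate is the bookkeeping in the middle step: making precise the claim that the principal-part deformation $Re(b_0 dz^2)$ of the metric, combined with the induced change in $\ell$, produces exactly $\pt(\rho^2) = -Re(b_0)$ rather than $-Re(b_0)$ times some $\ell$-dependent factor that one would then need to control. This requires computing $\tfrac{d}{d\eps}\big|_{\eps=0}\rho^2$ for the explicit one-parameter family of model hyperbolic collars whose length derivative matches $\tfrac{d\ell}{dt}$ and whose conformal structure moves by $b_0 dz^2$, and checking the normalisations of $dz^2$ versus the $(s,\th)$ coordinates (where \eqref{sizes_on_collars} and the normalisation $\|\Phi\|^2 = 4\int\rho^{-2}|\psi|^2$ fixed earlier must be used consistently). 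Everything else — the estimate on $\tfrac{d\ell}{dt}$, the decay estimate for $\om^\perp$, and the comparison $\de\sim\inj_g(p)\sim\rho(p)$ via \eqref{eq:rho-inj}, \eqref{est:rho-inj-appendix}, \eqref{est:rho-by-inj} — is an application of results already established above, and the final error terms all combine into the claimed $Ce^{-1/\rho(p)}\|\Psi\|_{L^1(M,g)}$ bound, with $C$ depending only on the genus.
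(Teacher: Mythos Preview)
Your overall strategy---differentiate \eqref{eq:rho}, split into the $\ell$-contribution and the $\inj_g$-contribution, recognise that the principal part $Re(b_0\,dz^2)$ should produce exactly $-Re(b_0)$, and control the remainder via the decay of $\om^\perp$---is the paper's strategy. But there are two genuine gaps.

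First, your route to $\frac{d\ell}{dt}$ via Lemma~\ref{lengthevol} and Proposition~\ref{lemma:projection} introduces an error of order $\ell^2\|\Psi\|_{L^1}$, which after multiplying by the derivative of the prefactor and $\sinh^2(\inj_g(p))$ leaves you with a residual that is only polynomially small in $\rho$. This is \emph{not} dominated by $e^{-1/\rho}\|\Psi\|_{L^1}$: at the centre of a thin collar $\rho=\ell/2\pi$, so $e^{-1/\rho}=e^{-2\pi/\ell}$ decays far faster than any power of $\ell$. The fix is immediate---since $\pt g=Re(P_g(\Psi))$ with $P_g(\Psi)$ holomorphic, you can use \eqref{est:ddtl_holomorphic} to get the exact identity $\frac{d\ell}{dt}=-\frac{2\pi^2}{\ell}Re(b_0)$ with no error---but the point is that the polynomial slack you allowed yourself is fatal here. (Incidentally, a computation of $F'(\ell)$ for $F(\ell)=\ell/(2\pi\sinh(\ell/2))$ shows $F'=O(\ell)$, so the prefactor term is $O(\rho^2)Re(b_0)$, far smaller than $-Re(b_0)$; the dominant contribution comes from $\pt\inj_g$, not from $\pt\ell$.)

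Second, and more importantly, you have correctly located the hard step but not supplied the idea that resolves it. The paper's device is a comparison flow: consider an auxiliary rotationally symmetric flow $\hat g(t)$ on the collar with $\hat g(t_0)=g_0$ and $\pt\hat g=Re(b_0\,dz^2)$. For this flow the conformal factor satisfies $L_{\hat g(t)}(\{s_0\}\times S^1)=2\pi\hat\rho$, so differentiating the circle length gives $\pt(\hat\rho^2)=-Re(b_0)$ in one line. Separately, one derives a general formula \eqref{key_rho_squared} by writing $\inj_g(q)=\frac12 L_g(\si)$ for the geodesic loop $\si$ based at $q$ and differentiating; this expresses $\pt\rho^2$ in terms of $\frac{d\ell}{dt}$ and $\int_0^{2\iota}\pt g(\dot\si,\dot\si)$. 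Applying this formula to both $g$ and $\hat g$, noting that $\frac{d\ell}{dt}=\frac{d\hat\ell}{dt}$ (the collar-decay part contributes nothing to the length derivative), and subtracting, one is left with
\[
\pt\rho^2(q)=-Re(b_0)+\tfrac12\rho(q)F(\ell)\cosh(\iota)\int_0^{2\iota}Re(\om^\perp)(\dot\si,\dot\si),
\]
and the last integral is controlled by $\|\om^\perp\|_{L^\infty(\iota\thin(\Col))}$ via \eqref{est:decay-part-lemma} at scale $\de=\iota\leq\pi\rho(q)$, which gives the exponential bound. The comparison trick is what bypasses the explicit ``bookkeeping'' you were worried about.
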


\begin{proof}
In the lemma, the metric $g_0:=g(t_0)$ is being deformed in the direction $\pt g=Re(b_0 dz^2)+Re(\om^\perp)$, where 
$\om^\perp=\om^\perp(P_{g_0}(\Psi(t_0)),\Col)$.
Heuristically, it is the first of these terms that is dominant. Indeed, if we consider an alternative, smooth \emph{symmetric} flow of hyperbolic metrics $\hat g(t)$ on $\Col$ for $t$ near $t_0$, with $\hat g(t_0)=g_0$ and $\pt \hat g=Re(b_0 dz^2)$ (one could write down such a flow explicitly) then the corresponding conformal factor $\hat \rho$ could be written 
at $q=(s_0,\theta_0)\in \Col$
independently of the time-$t$ collar coordinates as
$$L_{\hat g(t)}(\{s_0\}\times S^1)=2\pi\hat\rho(q,t),$$
because of the symmetry of the deformation.
In particular, we would have at $t=t_0$ that
\beqa  \label{hat_comp}
\pt(\hat\rho^2)(q)&=2\rho(s_0)\cdot \frac1{2\pi}\ddt L_{\hat g(t)}(\{s_0\}\times S^1)\\
&= \frac{\rho(s_0)}{2\pi} \int_{\{s_0\}\times S^1}\big((g_0)_{\th\th}\big)^{-1/2}\pt \hat g_{\th\th} d\th\\
&=-Re(b_0).
\eeqa
Another way of computing the derivative of the conformal factor $\hat\rho$, or indeed $\rho$, is via \eqref{eq:rho}.
Writing $F(x)=\frac{x}{2\pi\sinh(x/2)}$ so that 
$\rho(q,t)=F(\ell(t))\sinh(\inj_{g(t)}(q))$,
we compute at $t=t_0$
$$\pt \rho(q)=F'(\ell)\frac{d\ell}{dt}\sinh(\inj_{g_0}(q))
+F(\ell)\cosh(\inj_{g_0}(q))\pt (\inj_{g(t)}(q)).$$
In order to compute $\pt (\inj_{g(t)}(q))$, we note that $\iota:=\inj_{g_0}(q)$ can be realised as half the length of a unit speed geodesic $\si:[0,2\iota]\to \Col_\iota$ mapping the end points to $q$ and wrapping once around the collar, where 
$\Col_\iota:= \{p\in\Col\ :\ \inj_{g_0}(p)\leq\iota\}$
is the closure of $\iota\thin(\Col,g_0)$ when this thin part is nonempty.
More generally, for $t$ close to $t_0$, 
$$\inj_{g(t)}(q)=\half L_{g(t)}(\si(t))$$
for an appropriate continuous family of geodesics $\si(t)$ 
in $(M,g(t))$ with $\si(t_0)=\si$ and 
with fixed end points.
Adapting the argument of Remark 
\ref{regularity_rmk} gives
$$\pt\inj_{g(t)}(q)=\half \pt L_{g(t)}(\si).$$
But we can compute
$$\pt L_{g(t)}(\si)=\half\int_0^{2\iota}\pt g(\dot\si,\dot\si),$$
and so assembling what we have seen, we get at $t=t_0$ that
\beq
\label{key_rho_squared}
\pt \rho^2(q)=2\rho(q)F'(\ell)\frac{d\ell}{dt}\sinh(\inj_{g_0}(q))
+\half\rho(q)F(\ell)\cosh(\inj_{g_0}(q))
\int_0^{2\iota}\pt g(\dot\si,\dot\si).
\eeq
This formula equally well applies to the flow $\hat g(t)$, and so noting that $\frac{d\ell}{dt}=\frac{d\hat\ell}{dt}$ at $t=t_0$ 
(because the collar decay part $\om^\perp$ does not contribute to $\frac{d\ell}{dt}$) we obtain from \eqref{hat_comp} that 
\beq
-Re(b_0)=2\rho(q)F'(\ell)\frac{d\ell}{dt}\sinh(\inj_{g_0}(q))
+\half\rho(q)F(\ell)\cosh(\inj_{g_0}(q))
\int_0^{2\iota}Re(b_0 dz^2)(\dot\si,\dot\si).
\eeq
This allows us to simplify \eqref{key_rho_squared} when applied to $g(t)$, to
\beq
\label{simp_rho_squared}
\pt \rho^2(q)=-Re(b_0)+\half\rho(q)F(\ell)\cosh(\inj_{g_0}(q))
\int_0^{2\iota}Re(\om^\perp)(\dot\si,\dot\si),
\eeq
and in particular, 
by \eqref{est:decay-part-lemma} of Proposition \ref{lemma:projection}, 
we find that 
\beqa 
\big|\pt \rho^2(q)+Re(b_0)\big|
&\leq C\rho(q)\iota\|\om^\perp\|_{L^\infty(\Col_\iota
)}\\
&\leq C\rho(q)\iota^{-1}e^{-\pi/\iota}\|\Psi\|_{L^1(M,g_0)}\\
&\leq Ce^{-1/\rho(q)}\|\Psi\|_{L^1(M,g_0)}
\eeqa
as desired, because $x\mapsto x^{-1}e^{-\pi/x}$ is increasing for $x\in (0,\pi)$, and $\iota\leq \pi\rho(q)$ by \eqref{est:rho-inj-appendix}.
\end{proof}

To apply this lemma to solutions of the Teichm\"uller harmonic map flow we observe:

\begin{rmk}
It is a consequence of
Proposition \ref{lemma:projection}, \eqref{sizes_on_collars} and the definition $\Phi(u,g)=(|u_s|^2-|u_\th|^2-2i\langle u_s,u_\th\rangle)dz^2$, 
that the principal part $b_0dz^2$ of $\frac{\eta^2}{4}P_g(\Phi(u,g))$ on a collar
is given by the weighted integrals 
\beqas
Re(b_0)&=\frac{\ell^{3}}{32\pi^5}\eta^2\int_{\Col} (\abs{u_s}^2-\abs{u_\theta}^2)\rho^{-4}dv_g+r_1\\
Im(b_0)&=-\frac{\ell^{3}}{16\pi^5}\eta^2\int_{\Col} \langle u_s, u_\theta\rangle \rho^{-4}dv_g+r_2,
\eeqas
with error terms $r_1$, $r_2$ bounded by 
$$\abs{r_1}+\abs{r_2}\leq C\ell^3\cdot\norm{\Phi }_{L^1}\leq C\ell^3E_0,$$ 
$C$ depending only on $\gamma$ and $\eta$. 
In particular, 
\beq \label{est:a} 
\abs{Re(b_0)}\leq C \ell^3\int_\Col e(u,g)\rho^{-2}\vph dv_g+C\ell^3\leq C\ell^3(\I+1)
\eeq
while 
\beq \label{est:b}
\abs{Im(b_0)}\leq C\ell^3 \left((I^{(\theta)} \I )^{1/2}
+1\right),
\eeq 
contains the weighted angular energy
$I^{(\theta)}$ controlled by Lemma \ref{ang_en_est} ($C$ now also depending on $E_0$).
\end{rmk}

We can now finally estimate the evolution of weighted energy $\I$ defined in \eqref{I-zeppelin}.

\begin{proof}[Proof of Lemma \ref{lemma:I-zeppelin}]
Let $(u,g)$ be a solution of \eqref{flow} as in Lemma \ref{lemma:I-zeppelin}.
%
The first equation of \eqref{flow} can be written as
$$\pt u-\Delta_{g} u=A_{g}(u)(\na u,\na u)\perp T_uN,$$
which can then be multiplied by $\rho^{-2}\vph^2 \pt u\in T_uN$ (where $\vph$ and its derivative will always be evaluated at $\rho(p,t)$) and integrated over the collar to obtain
\beqas 
0&=\int \big[\abs{\pt u}^2-\pt u\Delta_g u\big] \stw dv_g =\int\abs{\pt u}^2\stw dv_g+\int \langle du,d ( \pt u\rho^{-2}\vph^2)\rangle_{g} dv_g\\
&=\int \abs{\pt u}^2\stw dv_g+\int\langle du,\pt du\rangle_g \stw dv_g +\int \lan du,d(\rho^{-2})\ran_g\pt u\vph^2 dv_g \\
&\quad +\int  \lan du,d (\vph^2\circ \rho) \ran\pt u \rho^{-2}dv_g.
\eeqas
Thus 
\beqa\label{est:I-neu-2}
&\int\abs{\pt u}^2\stw dv_g+\ddt \I(u(t),g(t))
\\ & \hspace{1cm}
\leq 
\ddeps \I(u(t),g(t+\eps))-\int \lan du,d(\rho^{-2})\ran_g\pt u\vph^2 dv_g\\
 & \hspace{1cm}\qquad -2\int \vph\vph' \lan du,d \rho\ran_g\pt u\rho^{-2}dv_g,\eeqa
where $\I(u,g)$ is given by \eqref{I-zeppelin}.  
As $\abs{d\rho}_{g}=\rho^{-1}\abs{\rho'}\leq \rho$, see \eqref{eq:rho_deriv},
we can estimate
$$-2\int \vph\vph' \lan du,d \rho\ran_g\pt u \rho^{-2}dv_g\leq 
C\int \abs{du}_g\abs{\pt u}\rho^{-1}\vph dv_g
\leq  \tfrac12 \int\abs{\pt u}^2\stw dv_g+CE(u,g),$$
as well as
\beqa 
-\int \lan du,d(\rho^{-2})\ran_g\pt u\vph^2 dv_g\leq 2\int \abs{du}_g\abs{\pt u} \stw dv_g\leq \tfrac12 \int\abs{\pt u}^2\stw dv_g+C \I.
\eeqa

As the energy is uniformly bounded, \eqref{est:I-neu-2} thus reduces to 
\beq \label{est:I-neu-1}
\ddt \I(u(t),g(t))\leq \ddeps \I(u(t),g(t+\eps))
+C(\I+1). \eeq

To estimate the first term on the right-hand side, we rewrite it in collar coordinates $(s,\theta)$ of $(\Col_t,g(t))$, $t$ fixed, and use that $\pt g$ is trace-free (which fixes the volume form) 
and that $g^{s\th}=0$ at time $t$, 
to get
\beqa \label{est:split-dI}
 \ddeps \I(u(t),g(t+\eps))&= \half\int \pt\big(g^{ss}\rho^{-2}\big) \abs{u_s}^2 \vph^2 dv_g
+\int \pt \big(g^{s\theta}\big)\lan u_s,u_\theta \ran \rho^{-2}\vph^2 dv_g\\
& + \half\int \pt\big(g^{\theta\theta}\rho^{-2}\big) \abs{u_\theta}^2 \vph^2 dv_g +\int \abs{du}_g^2\rho^{-2} \pt (\vph\circ  \rho) \vph dv_g\\
& =: T_1+T_2+T_3+T_4.
\eeqa
We remark that the two integrals in 
\[T_1=\half\int \pt\big(g^{ss}\big)\rho^{-2} \abs{u_s}^2 \vph^2 dv_g+\half\int \pt\big(\rho^{-2}\big) g^{ss}\abs{u_s}^2 \vph^2 dv_g\] can be of order $\ell\cdot \I^2$ and thus could not be controlled separately. 
Based on the precise estimates on the evolution of $\rho$ derived in Lemma \ref{lemma:rho} we shall however see that, 
up to an exponentially decaying error, the two integrands 
agree, but appear with opposite signs, and thus cancel. 
Indeed, writing $\pt g=Re(b_0 dz^2)+Re(\om^\perp)$ as usual as the sum of its principal and its collar decay parts and recalling that $\pt (g^{ss})=-\rho^{-4}\pt g_{ss}$, we may use  
Lemma \ref{lemma:rho}, 
to obtain
\beqa 
\abs{g^{ss}\pt\big(\rho^{-2})+\rho^{-2}\pt(g^{ss})}&=\abs{-\rho^{-6}\pt(\rho^2)-\rho^{-6}\pt g_{ss}} 
=\rho^{-6}\abs{\pt(\rho^2)+Re(b_0)+Re(\om^\perp)_{ss}}\\
&\leq 
C\rho^{-6}e^{-1/\rho}\norm{\Phi(u,g)}_{L^1}+\rho^{-4}\abs{\om^\perp}_g.
\eeqa 
As $\om^\perp$ is controlled by \eqref{est:decay-part-lemma} and  as $\norm{\Phi(u,g)}_{L^1}\leq C E(u,g)\leq C$, we can thus estimate
\beqa 
\abs{g^{ss}\pt\big(\rho^{-2})+\rho^{-2}\pt(g^{ss})}&\leq C\rho^{-6}e^{-1/\rho}+C\rho^{-4}\inj_g(p)^{-2}e^{-\pi/\inj_g(p)} 
 \leq 
C\rho^{-6}e^{-1/\rho},
\eeqa where the last inequality is a consequence of $\inj_g(p)\leq \pi\rho$ (see \eqref{est:rho-inj-appendix}) and the fact that $x\mapsto x^{-2}e^{-\pi/x}$ is 
monotone near zero and $ \rho$ is bounded from above. 
Consequently
\beq \label{est:I1}
T_1\leq C\int \rho^{-6} e^{-1/\rho} \abs{u_s}^2 \vph^2 dv_g\leq C E(u,g) \leq C. \eeq
To obtain a bound on $T_2$ we use that 
$\pt(g^{s\theta})
=-\rho^{-4}\pt g_{s\th}
=\rho^{-4}(Im(b_0)-(Re(\om^\perp))_{s\theta})$, with $Im(b_0)$ satisfying
\eqref{est:b} and $\om^\perp$ bounded by \eqref{est:decay-part-lemma}, and estimate
\beqa  \label{est:I2}
T_2&\leq \abs{Im(b_0)}\cdot (\sup_{\Col} \rho^{-2}) \int \abs{u_s}\cdot \abs{u_\theta}\rho^{-4}\vph^2dv_g\\
&\quad +C\int \abs{u_s}\cdot \abs{u_\theta
}\rho^{-4} \inj_g(p)^{-2}e^{-\pi/\inj_g(p)}\vph^2 dv_g\\
&\leq C\ell^{-2}\abs{Im(b_0)}\cdot (I^{(\theta)}\I)^{1/2}+C\int \abs{du}_g^2 \inj_g(p)^{-4}e^{-\pi/\inj_g(p)}dv_g\\
&\leq C\ell \big(I^{(\theta)}  \I+( I^{(\theta)}\I)^{1/2}\big)+C E(u,g)\\
&\leq C \ell I^{(\theta)} \I+C,
\eeqa
where we used  $ \pi \rho\geq \inj_g(p)$ in the second estimate and Young's inequality in the last.
Similarly, combining Lemma \ref{lemma:rho} with \eqref{est:decay-part-lemma} and \eqref{est:a}, we can estimate
\begin{equation*}
\begin{aligned}
\abs{\pt(g^{\theta\theta}\rho^{-2})}&=\abs{\rho^{-6}Re(b_0)-\rho^{-6}(Re(\om^ \perp))_{\theta \theta}-\rho^{-6}\pt(\rho^2)}\\
&\leq 2\rho^{-6}\abs{Re(b_0)}+C\rho^{-4}\inj_g(p)^{-2}e^{-\pi/\inj_g(p)}+C\rho^{-6}e^{-1/\rho}\norm{\Phi(u,g)}_{L^1}\\
&\leq C\ell\rho^{-4}(\I+1)+C\rho^{-2}\big( \inj_g(p)^{-4}e^{-\pi/\inj_g(p)}+\rho^{-4}e^{-1/\rho}\big)\\
&\leq C\ell\rho^{-4}(\I+1)+C\rho^{-2},
\end{aligned}
\end{equation*}
and consequently, noting that
$I^{(\theta)}\leq 2I\leq 2\I+C$ by \eqref{I_I_est}, we have
\beqa \label{est:I3}
T_3&\leq C\ell (\I+1) I^{(\theta)}+C E(u,g)
\leq C\ell (I^{(\theta)}+1)\I +C.
\eeqa

Finally, we recall that $\rho\geq \delta$ on the support of 
$\vph'\circ \rho$ and estimate 
\beqa \label{est:T2} T_4
&= \half \int \abs{du}_g^2\rho^{-3} \vph\vph' \pt (\rho^2)  dv_g \leq C E(u,g)\cdot \sup_{\Col} \abs{\pt (\rho^2)}
\leq C \cdot
\abs{Re(b_0)}+C\\
&\leq C\ell^3(\I+1)+C\leq C,
\eeqa
where we applied Lemma \ref{lemma:rho} in the second, \eqref{est:a} in the third and 
$\I\leq \big(\tfrac{2\pi}{\ell}\big)^2 E(u,g)\leq C\ell^{-2}$ in the last step.

Inserting \eqref{est:I1}-\eqref{est:T2} into  
\eqref{est:split-dI} and combining the resulting estimate with \eqref{est:I-neu-1} 
thus implies
\beq
\ddt \I\leq C(\ell I^{(\theta)}+1)\I +C,
\eeq
which, combined with the angular energy estimate of Lemma \ref{ang_en_est}, yields the desired
bound of
$$\ddt \I\leq C(1+\norm{\tau_g(u)}_{L^2}^2)\cdot (1+\I).$$
This very last estimate is the only place we use the no bubble assumption.
\end{proof}

\appendix
\section{Appendix}
We will need the following `Collar lemma' throughout the paper.

\begin{lemma}[Keen-Randol \cite{randol}] \label{lemma:collar}
Let $(M,g)$ be a closed hyperbolic surface and let $\si$ be a simple closed geodesic of length $\ell$. Then there is a neighbourhood around $\si$, a so-called collar, which is isometric to the 
cylinder 
$$\Col(\ell):=(-X(\ell),X(\ell))\times S^1$$
equipped with the metric $\rho^2(s)(ds^2+d\theta^2)$ where 
$$\rho(s)=\frac{\ell}{2\pi \cos(\frac{\ell s}{2\pi})} 
\qquad\text{ and }\qquad  
X(\ell)=\frac{2\pi}{\ell}\left(\frac\pi2-\arctan\left(\sinh\left(\frac{\ell}{2}\right)\right) \right).$$ 
The geodesic $\si$ then corresponds to the circle $\{(0,\theta)\ |\ \theta\in S^1\}\subset \Col(\ell)$. 
\end{lemma}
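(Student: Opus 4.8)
The plan is to split the argument into a purely hyperbolic-geometric statement — that the stated cylinder embeds isometrically as a neighbourhood of $\sigma$ — which I would quote from the literature, followed by an explicit change of coordinates which is a direct computation. For the first step, lift $\sigma$ to a complete geodesic $\tilde\sigma\subset\mathbb{H}^2$ and let $A\in\pi_1(M)$ be the hyperbolic deck transformation with axis $\tilde\sigma$ and translation length $\ell$. Since $\sigma$ is simple, distinct $\pi_1(M)$-translates of $\tilde\sigma$ are pairwise disjoint complete geodesics, and the classical discreteness argument of Keen--Randol (see \cite{randol}, and also \cite{Hu} Prop.~IV.4.2) shows that any two of them lie at distance at least $2w$, where $w:=\arsinh\big(1/\sinh(\ell/2)\big)$. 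Hence the $w$-neighbourhood of $\sigma$ in $M$ is isometric to the quotient by $\langle A\rangle$ of the $w$-neighbourhood of $\tilde\sigma$ in $\mathbb{H}^2$, which is an embedded hyperbolic cylinder; I would simply cite this rather than re-derive the sharp separation bound, as that is exactly the delicate content of the collar lemma and is needed here only in its standard form.

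On this cylinder I would introduce Fermi coordinates $(r,t)$, with $r\in(-w,w)$ the signed distance to $\sigma$ and $t$ the arclength parameter along $\sigma$, so that $t\in\R/\ell\Z$; the hyperbolic metric then takes the standard Fermi form $dr^2+\cosh^2(r)\,dt^2$. Setting $\theta:=\tfrac{2\pi}{\ell}t\in S^1$ turns this into $dr^2+\tfrac{\ell^2}{4\pi^2}\cosh^2(r)\,d\theta^2$. To put it in conformal form I look for $s=s(r)$ with $s(0)=0$, $\rho(s)^2=\tfrac{\ell^2}{4\pi^2}\cosh^2 r$ and $\rho(s)^2\,ds^2=dr^2$; equivalently $\tfrac{ds}{dr}=\tfrac{2\pi}{\ell\cosh r}$, so $s=\tfrac{2\pi}{\ell}\int_0^r(\cosh\bar r)^{-1}\,d\bar r=\tfrac{2\pi}{\ell}\arctan(\sinh r)$. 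Inverting gives $\sinh r=\tan(\tfrac{\ell s}{2\pi})$, hence $\cosh r=1/\cos(\tfrac{\ell s}{2\pi})$ on the relevant range (where $|\tfrac{\ell s}{2\pi}|<\tfrac{\pi}{2}$), so that $\rho(s)=\tfrac{\ell}{2\pi}\cosh r=\tfrac{\ell}{2\pi\cos(\ell s/2\pi)}$ and the metric becomes $\rho^2(s)(ds^2+d\theta^2)$, as claimed.

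It then remains to identify the range of $s$ and the location of $\sigma$. The ends $r=\pm w$ correspond to $s=\pm\tfrac{2\pi}{\ell}\arctan(\sinh w)=\pm\tfrac{2\pi}{\ell}\arctan\big(1/\sinh(\ell/2)\big)$, and using $\arctan u+\arctan(1/u)=\tfrac{\pi}{2}$ for $u>0$ this equals $\pm\tfrac{2\pi}{\ell}\big(\tfrac{\pi}{2}-\arctan(\sinh(\ell/2))\big)=\pm X(\ell)$, so the cylinder is exactly $\Col(\ell)$; moreover $s=0\iff r=0$, i.e.\ $\sigma=\{0\}\times S^1$. The only non-routine ingredient is the embeddedness established in the first step — this is the main, and entirely classical, obstacle — while everything after it is the elementary computation above.
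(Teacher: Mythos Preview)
Your argument is correct. Note, however, that the paper does not actually prove this lemma: it is stated in the appendix with a citation to Keen--Randol \cite{randol} (and \cite{Hu}) and then used as a black box. So there is no ``paper's own proof'' to compare against; you have supplied one where the authors chose simply to quote the result.

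That said, what you have written is exactly the right way to unpack the statement: cite the classical collar theorem for the embedded tubular neighbourhood of half-width $w=\arsinh(1/\sinh(\ell/2))$, and then carry out the Gudermannian change of variables from Fermi coordinates $(r,t)$ to the conformal coordinates $(s,\theta)$. Your computations of $\rho$, of the range $X(\ell)$ via $\arctan u+\arctan(1/u)=\pi/2$, and of the correspondence $\sigma=\{s=0\}$ are all correct.
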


In this version of the collar lemma, the intrinsic distance $w$ between the two ends of the collar is related to $\ell$ via
$$\sinh \frac{\ell}{2} \sinh \frac{w}{2}=1,$$
which is sharp.
In order to simplify the discussion of dependency of constants, 
and ensure that different collars do not intersect, we will only
talk about collars with $0<\ell<2\arsinh(1)$ (cf. \cite[Appendix A.2]{RTZ}).
As $X(\ell)$ is decreasing in $\ell$, we then have
\beq
\label{X_lower_bd}
X(\ell)>\frac{\pi^2}{4\arsinh(1)}\qquad\text{ for }0<\ell<2\arsinh(1).
\eeq

We recall (cf. \cite[Lemma A.5]{RTZ}) that 
the injectivity radius is given by the formula
\beq \label{eq:rho-inj} \sinh({\inj}(s,\theta))\cdot \cos\left(\frac{\ell s}{2\pi}\right) =  \sinh\left(\frac{\ell}{2}\right).  \eeq
Note that at the ends of the collar we have
$$\rho(X(\ell))=\rho(-X(\ell))=\frac{\ell}{2\pi \tanh\frac{\ell}{2}}\sim\frac{1}{\pi}\qquad\text{for small }\ell>0,$$
and 
\beq
\label{inj_at_end}
\inj(X(\ell),\th)=\inj(-X(\ell),\th)=\arsinh(\cosh(\frac{\ell}{2}))\sim \arsinh(1)\qquad\text{for small }\ell>0.
\eeq
Within the collar, for $s\in(-X(\ell),X(\ell))$, we have 
\beq
\label{rho_range}
\rho(s)\leq \rho(X(\ell))=\frac{\ell}{2\pi\tanh\frac{\ell}{2}}
\in \left(\frac{1}{\pi},\frac{\sqrt{2}\arsinh(1)}{\pi}\right),
\eeq
for $\ell\in (0,2\arsinh(1))$.
Moreover, we can compute
\beq\label{eq:rho_deriv}
\frac{d}{ds}\log\rho(s)=\frac{\ell}{2\pi}\tan\frac{\ell s}{2\pi},
\qquad
\text{ so }
\qquad
\left|
\frac{d}{ds}\log\rho(s)
\right|
\leq \rho(s)
\eeq
and hence for $s\in(-X(\ell),X(\ell))$ and $\ell\in (0,2\arsinh(1))$ we have
\beq
\label{rho_deriv}
\bigg|\frac{d}{ds}\log\rho(s)\bigg|\leq \frac{\ell}{2\pi\sinh\frac{\ell}{2}}\leq \frac{1}{\pi}.
\eeq
One consequence that we shall use several times is that 
for any $\Lambda>0$ there exists $C\in (0,\infty)$ such that for any $\ell\in (0,2\arsinh(1))$ and $s_0\in (-X(\ell)+\Lambda,X(\ell)-\Lambda)$ (i.e. so that 
$\Cyl_\Lambda(s_0):=(s_0-\Lambda,s_0+\Lambda)\times S^1\subset \Col(\ell)$) if such $s_0$ exists, we have
\beq
\label{rho_equiv}
\frac{1}{C}\rho(s_0)\leq\rho(s)\leq C\rho(s_0)
\eeq
for all $s\in \Cyl_\Lambda(s_0)$.

Because $L_{g}(\{s_0\}\times S^1)=2\pi\rho(s_0)$
we can always bound 
\beq 
\label{est:rho-inj-appendix} \inj_{g}(p_0)\leq \pi\rho(p_0).\eeq
Conversely, \eqref{eq:rho-inj} implies that 
$$\rho(p_0)=\frac{\ell}{2\pi\sinh(\frac\ell2)}\cdot \sinh( \inj_g(p_0))\leq \frac1\pi\sinh( \inj_g(p_0))\leq \frac1\pi\cosh(\inj_g(X(\ell),\theta))\inj_g(p_0)$$
which, once combined with \eqref{inj_at_end}, implies that also the reverse inequality
\beq 
\label{est:rho-by-inj}
\rho(p_0)\leq C\cdot \inj_g(p_0)\eeq
is valid with a universal constant, e.g. with $C=1$, on collars $\Col(\ell)$, 
$0<\ell< 2\arsinh(1)$. 

For $\de\in (0,\arsinh(1))$, the $\de\thin$ part of a collar is given by the subcylinder 
\beq(-X_\de(\ell), X_\de(\ell)) \times S^1 \subseteq \Col(\ell),    \eeq
where 
\beq \label{eq:Xde} 
X_\de(\ell)=  \frac{2\pi}{\ell}\left(\frac{\pi}{2}-\arcsin \left(\frac{\sinh(\frac{\ell}{2})}{\sinh \delta}\right) \right)\eeq
 for $\de\geq \ell/2$, respectively zero for smaller values of $\delta$.

\begin{prop}
\label{X_Xdelta}
There exists universal $C\in (0,\infty)$ such that
for every $\de\in(0,\arsinh(1))$ and $0<\ell\leq 2\de$, we have
$$\frac{\pi}\de -C\leq X(\ell)-X_\de(\ell)\leq \frac{\pi^2}{2\de}.$$
\end{prop}
\begin{proof}
By definition of $X(\ell)$ and $X_\de(\ell)$, we have
$$X(\ell)-X_\de(\ell)=\frac{2\pi}{\ell}\left[
\arcsin \left(\frac{\sinh(\frac{\ell}{2})}{\sinh \delta}\right) 
-\arctan\left(\sinh\left(\frac{\ell}{2}\right)\right) 
\right].$$
Using convexity of $\arcsin:[0,1]\to [0,\frac\pi2]$, we compute
the required upper bound
$$X(\ell)-X_\de(\ell)\leq \frac{2\pi}{\ell}
\arcsin \left(\frac{\sinh(\frac{\ell}{2})}{\sinh \delta}\right)
\leq
\frac{2\pi}{\ell}\frac\pi2\frac{\sinh(\frac{\ell}{2})}{\sinh \delta}
\leq \frac{\pi^2}\ell 
\frac{\frac{\ell}{2}}{\delta}=\frac{\pi^2}{2\de}.$$
On the other hand, by estimating $\arcsin \th\geq \th$ and 
$\arctan\th\leq \th$ for $\th\in [0,1]$, we have
$$X(\ell)-X_\de(\ell)\geq \frac{2\pi}{\ell}
\left[
\frac{\sinh(\frac{\ell}{2})}{\sinh \delta}
-\sinh\left(\frac{\ell}{2}\right)\right]
\geq \pi\left[\frac{1}{\sinh\de}-1\right]
.$$
By estimating $\sinh\de\leq \de+C\de^3$ for $\de\in (0,\arsinh(1))$ and some universal $C$, we have $(\sinh\de)^{-1}\geq \frac{1}{\de}
(1-C\de^2)$ for some possibly different $C$, which completes the lower bound.
\end{proof}

We will use several times that working with respect to the hyperbolic metric, on a collar $\Col$ as above,
\beqa
\label{sizes_on_collars}
|dz^2|=2\rho^{-2};
\qquad
& \|dz^2\|_{L^1(\Col)}=8\pi X(\ell);\\
\qquad
\|dz^2\|_{L^\infty(\Col)}=\frac{8\pi^2}{\ell^2};
\qquad & \|dz^2\|_{L^2(\Col)}^2=\frac{32\pi^5}{\ell^3}-\frac{16\pi^4}{3}+O(\ell^2),
\eeqa
as a short computation verifies.

To analyse sequences of degenerating hyperbolic surfaces we make 
repeated 
use of the differential geometric version of the Deligne-Mumford compactness theorem.

%

\begin{prop}[{cf. \cite[Chapter IV]{Hu}}]  
\label{Mumford}
Let $(M,g_{i},c_{i})$ be a sequence of closed hyperbolic Riemann surfaces of genus $\gamma\geq2$.
Then, after selection of a subsequence,
$(M,g_{i},c_{i})$ converges to a complete hyperbolic punctured Riemann surface $(\Sigma,h,c)$, where $\Si$ is obtained from $M$ by removing a collection 
$\mathscr{E}=\{\sigma^{j}, j=1,...,k\}$ of $k\in\{0,\ldots,3(\ga-1)\}$ pairwise disjoint, homotopically nontrivial, simple closed curves on $M$ and the convergence is as follows:

For each $i$ there exists a collection $\mathscr{E}_{i}=\{\sigma^{j}_{i}, j=1,...,k\}$ of
pairwise disjoint simple closed geodesics on $(M,g_{i},c_{i})$
of length
$\ell(\sigma_{i}^{j})=:\ell_{i}^{j} \rightarrow 0\text{ as }i \rightarrow \infty$,
and an orientation preserving diffeomorphism $F_i:M\to M$ mapping $\si^j$ onto $\si_i^j$,
such that the restriction
$f_i=F_i|_\Si:\Si\rightarrow M\setminus \cup_{j=1}^k\sigma_{i}^{j} $ satisfies
$$(f_i)^{*}g_{i} \rightarrow h \text{ and } (f_i)^{*}c_{i}\to c \text{ in } C_{loc}^{\infty}\text{ on }\Sigma.$$
\end{prop}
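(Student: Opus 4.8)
The plan is to reduce the statement to the structure theory of the thin part of a hyperbolic surface together with an interior compactness argument, exactly as in Hummel \cite{Hu} or Hu\-brechts-type treatments. First I would fix the universal constant $\epsilon_0=\arsinh(1)$ (the Margulis constant in dimension $2$) and invoke the Collar lemma \ref{lemma:collar}: for each $i$, the $\epsilon_0$-thin part of $(M,g_i)$ is a disjoint union of standard hyperbolic collars $\Col(\ell_i^j)$ around simple closed geodesics $\sigma_i^j$, $j=1,\dots,k_i$, with $k_i\le 3(\gamma-1)$ by an Euler-characteristic/area count (each collar, together with the adjacent pairs of pants, consumes a definite chunk of the fixed area $2\pi(2\gamma-2)$). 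Passing to a subsequence, I may assume $k_i\equiv k$ is constant, that the homotopy classes of the $\sigma_i^j$ are independent of $i$ (there are only finitely many free homotopy classes of simple closed curves up to the mapping class group, but more simply: fix a background diffeomorphism type and note the collars sit in a bounded combinatorial configuration), and that each length $\ell_i^j$ converges to some $\ell^j\in[0,\epsilon_0]$. Reordering, suppose $\ell_i^j\to 0$ for $j\le k'$ and $\ell_i^j\to\ell^j>0$ for $k'<j\le k$; I then discard the non-degenerating collars from $\mathscr E_i$ (they are harmless: their conformal factors converge by the explicit formula in Lemma \ref{lemma:collar}), and relabel so that $\ell_i^j\to0$ for all $j=1,\dots,k$.

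Next I would establish interior compactness on the complement of fixed neighbourhoods of the degenerating geodesics. On the ``thick" part $\epsilon\thick(M,g_i)$ for $\epsilon<\epsilon_0$, the injectivity radius is bounded below by $\epsilon$ and above by $\arsinh(1)$ (the latter by definition of the thick part together with the fact that we are in genus $\ge2$ so there is no injectivity radius larger than a universal constant away from... — actually one only needs the lower bound here), so by standard injectivity-radius-controlled charts (harmonic coordinates, say, or simply pulling back the metric on balls of radius $\epsilon/2$ via $\exp$) the metrics $g_i$ have uniformly bounded geometry of all orders on $\epsilon/2\thick(M,g_i)$ — indeed for hyperbolic metrics all covariant derivatives of curvature vanish, so the only content is the lower injectivity radius bound, which gives local $C^\infty$ (in fact real-analytic) control. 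A diagonal/exhaustion argument over $\epsilon=1/m$, together with the Cheeger--Gromov style construction of a limiting manifold $\Sigma$ obtained by gluing these pieces along their overlaps, produces a (possibly disconnected, possibly non-compact) surface $\Sigma$, diffeomorphisms $f_i\colon\Sigma\to M\setminus\bigcup_j\sigma_i^j$ onto exhausting subsets, and a complete hyperbolic metric $h$ with $f_i^*g_i\to h$ in $C^\infty_{loc}$; completeness and the cusp structure near the $2k$ ends come from the explicit collar coordinates, since on $\Col(\ell_i^j)$ one has $\rho_i(s)=\frac{\ell_i^j}{2\pi\cos(\ell_i^j s/2\pi)}\to 0$ pointwise as $\ell_i^j\to0$ on any fixed-length subcylinder, degenerating the collar into two hyperbolic cusps. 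The conformal structures $c_i$ (which for a surface are determined by the metric up to scale, hence here literally by $g_i$) converge along with the metrics, giving $f_i^*c_i\to c$.

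The one genuinely delicate point — and the step I expect to be the main obstacle to write cleanly — is the bookkeeping that ensures the removed curves $\sigma^j$ on the \emph{fixed} surface $M$ are pairwise disjoint, homotopically nontrivial, and that the $f_i$ can be taken to be genuine diffeomorphisms of $M$ minus these curves (rather than merely local embeddings of pieces), i.e. the identification of $\Sigma$ with a fixed topological model independent of $i$. This is handled by observing that the collars $\Col(\ell_i^j)$, being embedded and disjoint with collar width $\to\infty$, force the core geodesics $\sigma_i^j$ to lie in a fixed finite set of isotopy classes on $M$ once $i$ is large (a pants-decomposition argument: a maximal disjoint collection of short geodesics extends to a pants decomposition, and there are only finitely many of those up to the action that matters here); after a further subsequence and an isotopy we may take the isotopy classes to be literally fixed, represented by disjoint simple closed curves $\sigma^1,\dots,\sigma^k$, and then $\Sigma:=M\setminus\bigcup_j\sigma^j$ with the $f_i$ the corresponding isotopies composed with the convergence maps. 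All the remaining assertions — homotopy of $\sigma_i^j$ to $\sigma^j$, $\ell(\sigma_i^j)\to0$, and the $C^\infty_{loc}$ convergence — have then already been arranged. I would refer to \cite{Hu} for the full details of this topological normalisation, which is standard, and present the geometric-analytic core (collar lemma $+$ bounded-geometry interior compactness $+$ cusp formation) in the detail above.
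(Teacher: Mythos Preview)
The paper does not give its own proof of this proposition: it is stated in the appendix with the attribution ``cf.\ \cite{Hu}'' and is used throughout as a black box from the literature. So there is nothing to compare your proposal against within the paper itself.

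That said, your sketch is the standard route (thick--thin decomposition via the Collar lemma, uniform bounded geometry on the $\epsilon$-thick part since the metrics are hyperbolic, Cheeger--Gromov interior convergence, explicit cusp formation on the degenerating collars, and a topological normalisation to pin down the isotopy classes of the pinching curves), and it is essentially the argument one finds in Hummel \cite{Hu}. One small correction: your claim that there are only finitely many free homotopy classes of simple closed curves up to the mapping class group, or that the collars ``sit in a bounded combinatorial configuration'', is doing real work and should not be brushed aside --- what you actually need, and what you later say more carefully, is that after passing to a subsequence the degenerating geodesics $\sigma_i^j$ can be taken to lie in fixed isotopy classes on $M$, which follows once you precompose with suitable diffeomorphisms of $M$ (this is where one genuinely uses the mapping class group, or equivalently works in moduli space rather than Teichm\"uller space). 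Your final paragraph identifies this correctly as the delicate bookkeeping step.
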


{\sc Mathematical Institute, University of Oxford, Oxford, OX2 6GG, UK}

{\sc Mathematics Institute, University of Warwick, Coventry,
CV4 7AL, UK}

\end{document}